\newcommand{\ddt}{\frac{\rm d}{{\rm d}t}}
\newcommand{\ddts}{\tfrac{\rm d}{{\rm d}t}}
\newcommand{\dtau}{{\rm d}\tau}
\newcommand{\C}{\mathbb{C}}
\newcommand{\Cpo}{\mathbb{C}_\omega^+}
\newcommand{\R}{\mathbb{R}}
\newcommand{\N}{\mathbb{N}}
\newcommand{\Vc}{\mathcal{V}}
\newcommand{\Wc}{\mathcal{W}}
\newcommand{\Jc}{\mathcal{J}}
\newcommand{\ran}{{\rm ran}\,}
\newcommand{\dom}{{\rm dom}\,}
\newcommand{\mul}{{\rm mul}\,}
\newcommand{\re}{{\rm Re}\,}
\newcommand{\setdef}[2]{\left\{\ #1\ \left|\ \vphantom{#1} #2\ \right.\right\}}
\newcommand{\scpr}[2]{\left\langle #1,#2\right\rangle}
\newenvironment{smallpmatrix}
{\left(\begin{smallmatrix}}
{\end{smallmatrix}\right)}
\newenvironment{smallbmatrix}
{\left[\begin{smallmatrix}}
{\end{smallmatrix}\right]}
\tikzset{
block/.style={
  draw, 
  rectangle, 
  minimum height=1.5cm, 
  minimum width=2.5cm, align=center,
  fill=blue!20
  }, 
line/.style={->,>=latex'}
}
\tikzset{negated/.style={
      decoration={markings,
           mark= at position 0.5 with {
               \node[transform shape] (tempnode) {$\backslash\!\!\backslash$};
            }
       },
       postaction={decorate}
    }
}
\newtheorem{definition}{Definition}[section]
\newtheorem{theorem}[definition]{Theorem}
\newtheorem{proposition}[definition]{Proposition}
\newtheorem{lemma}[definition]{Lemma}
\newtheorem{corollary}[definition]{Corollary}
\newtheorem{remark}[definition]{Remark}
\theoremstyle{definition}
\newtheorem{example}[definition]{Example}
\title{A pseudo-resolvent approach to abstract differential-algebraic equations}
\author{Hannes Gernandt \thanks{Bergische Universit\"{a}t Wuppertal,
Gau\ss stra\ss e 20,   D-42119 Wuppertal, Germany  (\texttt{hannes.gernandt@ieg.fraunhofer.de})} \and Timo Reis \thanks{Technische Universit\"{a}t Ilmenau,  Weimarer Stra\ss e 25,  98693 Ilmenau, Germany (\texttt{timo.reis@tu-ilmenau.de}).} }
\begin{document}
\maketitle

\begin{abstract}
We study linear abstract differential-algebraic equations (ADAEs), and we introduce an index concept which is based on polynomial growth of a~pseudo-resolvent. Our approach to solvability analysis is based on degenerate semigroups. 
We apply our results to some examples such as distributed circuit elements, and a system obtained by heat-wave coupling.
\end{abstract}

\section{Introduction}
We study abstract differential-algebraic equations (ADAEs)
\begin{align}
\label{adae}
\ddts Ex(t)=Ax(t)+f(t),\quad  Ex(0)=Ex_0,
\end{align}
where $x_0\in X$, $f:[0,t_f]\to Z$, $E:X\rightarrow Z$, $A:X\supset \dom A\rightarrow Z$ are linear, and $X$, $Z$ are Banach spaces. In most of our results, we will additionally assume that $E$ is bounded, $A$ is densely defined and closed, and
$f\in L^1([0,t_f];Z)$.  The main difficulty of the equations of the form \eqref{adae} come from the possible non-invertibility of $E$. Systems of the form \eqref{adae} may occur in two different settings:
\begin{enumerate}[a)]
\item \emph{Finite dimensional differential-algebraic systems which are coupled with partial differential equations}. Such a~coupling typically emerges from boundary action at the involved PDEs. In this case, the state space is typically a~Cartesian product of $\mathbb{R}^n$ with a function space, the operators $E$ and $A$ in \eqref{adae} are block operator matrices.     Typical examples are electrical circuits which further contain semiconductor devices or parasitic effects like crosstalk or heat propagation \cite{Tis03}. Another example is formed by multibody systems coupled with components from continuum mechanics \cite{Sime13}.
\item \emph{Systems governed by degenerate PDEs}. These occur in spatial and time-dependent processes which have the property that there are spatial subdomains in which the dynamics is restricted or even not present. In this type $E$ is usually a~multiplication operator on a~function space, whereas $A$ is a~spatial differential operator. An example is the vibrating string in which the spatially dependent string density and/or tension vanish in a~part. Further, {\em magnetoquasistatic approximations} of the electromagnetic field can be also described by infinite dimensional systems of type \eqref{adae}, see e.g.\ \cite{BdGCS18}. 
\end{enumerate}
Our main assumption is that the ADAE \eqref{adae} is \textit{regular}, i.e.\  we have a non-empty \textit{resolvent set}
\begin{align*}
\rho(E,A):=\{\lambda\in\C ~|~ \text{$\lambda E-A:\dom A\to Z$ is bijective}\}.
\end{align*}
If $\lambda\in\rho(E,A)$, then the closed graph theorem \cite[Thm.~7.9]{Alt16} implies that the operator $(\lambda E-A)^{-1}:Z\to X$ exists and that it is bounded.\\
The degenerate Cauchy problems of the above form were studied in numerous works. We mention the two monographs by {\sc Favini} and {\sc Yagi} \cite{FaviYagi99} and {\sc Showalter} \cite{Show74}. Both follow the ansatz via multi-valued linear operators. In \cite{FaviYagi99}, a~classical solution concept is considered, which requires additional smoothness assumptions on $f$, whereas in \cite{Show74}, ADAEs with certain monotonicity assumptions on the involved operators $E$ and $A$ are considered.
Motivated by the approach to finite dimensional DAEs via the {\em tractability index} \cite{LamoMarz13}, ADAEs on Hilbert spaces are considered in \cite{ReisTisc05,Reis07} under the additional assumption that a~certain projector chain is existent and stagnant. A~normal form has been constructed by means of this projector chain, and this allows to derive a~wide solution theory and in particular the determination of {\em consistent initial values}, i.e., the set of $x_0\in X$ such that \eqref{adae} with given $f$ possesses a~solution.
There are also recent works \cite{TrosWaur18,TrosWaur19}, where ADAEs are studied, and a maximal space of initial values such that the solution of the degenerate Cauchy problem exists and is unique. The construction of this space of initial values is based on a previous construction from \cite{BergIlch12a} for finite dimensional $X=Z$, using so-called {\em Wong sequences}. It is however assumed that $X=Z$ are Hilbert spaces and that $E$ and $A$ are bounded, which particularly implies that the assumptions in \cite{ReisTisc05,Reis07} are fulfilled in this situation.
Furthermore, certain closedness assumptions are made on the spaces in the Wong sequences, which exclude nearly all practically relevant examples.
The approach via Wong sequences has been further used in \cite{Tros19}
for the analysis of ADAEs with operators $E$ and $A$ as in our situation, however with the additional assumption that $f$ vanishes.
Previously, ADAEs of the form $\frac{d}{dt}Ex(t)=Ax(t)$ were also considered in \cite{BasaCher02}, where it is assumed that the domains of powers of a linear relation become stationary after a finite number of steps. In the case where $X=Z$ and $E={\rm id}$, this implies that $A$ is indeed bounded. Hence, this assumption excludes many practical examples arising from partial differential equations.\\
Besides $X,Z$ being Hilbert spaces, a~joint assumption \cite{ReisTisc05,Reis07,TrosWaur18,TrosWaur19,Tros19} is that the resolvent is \emph{polynomially bounded} on some half-plane. That is, for some $\omega\in\R$ holds $\Cpo:=\{\lambda\in\C : \re \lambda>\omega\}\subseteq\rho(E,A)$, and there exists some $k\in\N$ and $M>0$ with
\begin{equation}
\label{newdefindex}
\|(\lambda E-A)^{-1}\|\leq M|\lambda|^{k-1},\quad \text{for all $\lambda\in\Cpo$.}
\end{equation}
If $E$ and $A$ are matrices, then the resolvent is rational, whence \eqref{newdefindex} is always fulfilled in the finite dimensional case as long as the resolvent $\rho(E,A)$ is non-empty.
It further follows from the Weierstra\ss\ canonical form, see e.g.\ \cite{BergIlch12a}, that the smallest $k$ such that \eqref{newdefindex} holds for some $\omega\in\R$, equals the so-called {\em (Kronecker) index of the DAE} \cite{LamoMarz13,KunkMehr06}. However, in contrast to the finite dimensional case, the polynomial growth rate does not coincide with the tractability index of ADAEs \cite{ReisTisc05}.

In this note, we present an approach to the solution theory of ADAEs based on  \textit{pseudo-resolvents} \cite{Hill48}, which are holomorphic operator-valued functions that satisfy a resolvent identity. Using a growth assumption similar to \eqref{newdefindex}, we derive an operator $A_R$ associated with the pseudo-resolvent. This operator can be used to obtain an {ordinary differential equation}
hidden in the ADAE \eqref{adae}. Under additional assumptions it can be shown that the operator that describes the ordinary differential equations generates a strongly continuous semigroup.

We develop a novel solution theory, which makes use of the concepts of {\em linear relations} (or also called {\em multi-valued operators}), and {\em degenerate semigroups}, where the latter is a~generalization of the well-known concept of strongly continuous semigroup.
Based on these results, we will derive a generalization of the variations of constants formula, which gives rise to mild solutions of the ADAE \eqref{adae}. We will further present relations to weak and classical solutions of ADAEs. In doing so, we neither have to impose any monotonicity assumptions on $E$ and $A$ nor existence and stagnancy of certain subspace iterations or projector chains.

Our solution theory mainly uses the assumption $\mathbf{(D_k)}$ for some positive integer $k$, which is a dissipativity condition for the pseudo-resolvent restricted to a certain subspace 
For finite dimensional systems,  $E,A\in\mathbb{C}^{n\times n}$ the condition $\mathbf{(D_k)}$ is equivalent to the DAE \eqref{adae} having nilpotency index at most $k$ and in this sense the develop solution theory applicable to  ADAEs of higher index. Furthermore, it is shown that under certain dissipativity and nonnegativity assumptions on the operators $E$ and $A$, we can guarantee that $\mathbf{(D_1)}$ or $\mathbf{(D_2)}$ is fulfilled. This is in alignment with recent results for semi-dissipative Hamiltonian DAEs, i.e.\ $E$ is nonegative and $A$ is dissipative, where it is shown that their nilpotency index is at most two \cite{MehlMehrWojt18}. This class of DAEs is a subclass of \emph{port-Hamiltonian} DAEs which are based on an unifying energy-based system formulation that has received a lot of attention in recent years, see \cite{MehU23}. 

Besides our approach there are many results on solutions theory for ADAEs available which will be briefly discussed in the following. In \cite{SvirFedo03} the authors already use pseudo-resolvent for the solution theory, but their solution theory is based on the assumption of so called weak $(E,p)$-radiality, which is also a polynomial growth bound on the pseudo-resolvent which appears to be slightly stronger then our condition $\mathbf{(D_k)}$. Furthermore, no conditions on the coefficients ensuring weak $(E,p)$-radiality were obtained. 

In \cite{TrosWaur18,TrosWaur19} the authors generalize the Wong sequence approach to solve DAEs from \cite{BergIlch12a} to ADAEs with bounded coefficients in Hilbert spaces using (among other assumptions) considered a polynomial growth condition on the resolvent $s\mapsto (sE-A)^{-1}$.

In \cite{JacoMorr22} the authors studied dissipative DAEs in the sense that a certain dissipativity condition holds and develop a solution theory for the homogeneous case. However their assumptions imply that our condition  $\mathbf{(D_1)}$ holds. Here we give also dissipativity based conditions under which $\mathbf{(D_2)}$ holds and in that sense our results can be seen as generalization of \cite{JacoMorr22} to higher index.

More recently, \cite{MehrZwar23} also considered dissipative ADAEs in a slightly more general framework compared to \cite{JacoMorr22} allowing for a Hamiltonian weight operator that is inspired by port-Hamiltonian systems. Compared to this, we focus here on developing a more general framework for the solution of ADAEs. 

In \cite{FaviYagi99} also pseudo-resolvents are considered in Banach spaces, but their assumptions to guarantee existence of classical solutions are stronger and no mild or weak solutions are considered.

The paper is organized as follows: In Section~\ref{sec:solconcept} we consider solution concepts for ADAEs, and we briefly present the main concepts of this article, namely linear relations, pseudo-resolvents and degenerate semigroups. The interrelation between these concepts is - in conjunction with ADAEs - be elaborated in Section~\ref{sec:mainconcept}. A brief discussion of different index concepts for regular  ADAEs~\eqref{adae} can be found in Section~\ref{sec:indexsol}. In Section~\ref{sec:diss} we consider a~special class of ADAEs in which the operators fulfill a~certain dissipativity conditions, and in Section~\ref{sec:pseudoDGL}, we study solvability of ADAEs via pseudo-resolvents. Finally, in Section \ref{sec:appl} we provide several examples where abstract differential-algebraic equations arise.

\textbf{Nomenclature}. Throughout this article, $\N$ is the set of natural numbers including zero. For Banach spaces $X$ and $Y$, $L(X,Y)$ stands for the space of bounded operators mapping from $X$ to $Y$, and we further abbreviate $L(X):=L(X,X)$.
The norm in a~space {$X$} will be denoted by $\|\cdot\|_{{X}}$ or {simply} $\|\cdot\|$, if clear from context. The restriction of $A:X\to Y$ to $Z\subset X$ is denoted by $A|_Z$. Further, for the dual $X'$ of $X$, $\scpr{x'}{x}_{X',X}$ stands for the evaluation of $x'\in X'$ at $x\in X$. 

The domain of an operator $A$ is denoted by $\dom A$. The identity operator on the space $X$ is $I_X$, or just $I$, if clear from context. The symbol $A'$ stands for the dual of a~linear operator $A$.

For an interval $I\subset \R$ and a~Banach space $X$, the space of $k$-times differentiable $X$-valued functions on $I$  will be denoted by $C^k(I;X)$ and the space of continuous functions by $C^0(I;X)=C(I;X)$. Further, for $p\in [1,\infty]$, $k\in\N$, the Lebesgue space $L^p$ and Sobolev space $W^{k,p}$ of $X$-valued functions on $I$ are denoted by
$W^{k,p}(I;X)$ and $L^p(I;X)$, where we further set $W^{k,p}(J):=W^{k,p}(I;\C)$ and $L^p(I):=L^{p}(I;\C)$. For a~subinterval $J\subset I$ we canonically identify $C^k(J;X)\subset C^k(I;X)$, $W^{k,p}(J;X)\subset W^{k,p}(I;X)$ and $L^p(J;X)\subset L^p(I;X)$ via restriction of functions to $J$.
 Further, $W^{k,p}_{\rm loc}(J;X)$ and $L^p_{\rm loc}(J;X)$ are the spaces of functions $f:I\to X$ with $f\in L^p(K;X)$ ($f\in W^{k,p}(K;X)$) for all compact subintervals $K\subset I$.

Note that, throughout this article, integration of $X$-valued functions always has to be understood in the Bochner sense \cite{Diestel77},  and Banach spaces are always assumed to be complex. 

\section{Solutions concepts for ADAEs }
\label{sec:solconcept}
The following solution concepts for ADAEs of type \eqref{adae} are considered in this article. 

\begin{definition}[Solution concepts]
\label{def:solution}
Let $X$, $Z$ be normed spaces, let $E:X\to Z$ and $A:X\supset\dom A\to Z$ be linear, let $t_f>0$, $f:[0,t_f]\to Z$ and $x_0\in X$.
\begin{itemize}
    \item[\rm (a)] $x:[0,t_f]\to X$ is called \textit{classical solution} of \eqref{adae}, if  $Ex(\cdot)\in C([0,t_f];Z)\cap C^1((0,t_f);Z)$, $(Ex)(0)=Ex_0$, $x(t)\in\dom A$ for all $t\in (0,t_f)$, and it holds $\ddts Ex(t)=Ax(t)+f(t)$ for all $t\in (0,t_f)$.
    \item[\rm (b)] $x:[0,t_f]\to X$ is called a \textit{mild solution} of \eqref{adae}, if $f\in L^1([0,t_f];Z)$, $Ex(\cdot)\in C([0,t_f];Z)$, $x\in L^1([0,t_f];X)$, and for almost all $t\in [0,t_f]$, it holds $\int_0^t x(\tau)\dtau\in \dom A$ with
    \begin{align}
    \label{eq:mildsol}
    Ex(t)-Ex_0=A\int_0^tx(\tau)\dtau +\int_0^tf(\tau)\dtau.
    \end{align}
    \item[\rm (c)] $x:[0,t_f]\to X$ is called \textit{weak solution} of \eqref{adae}, if $A$ is densely defined, $Ex(\cdot)\in C([0,t_f];Z)$ with $(Ex)(0)=Ex_0$, and for all $z'\in Z'$, it holds that $t\mapsto\langle f(t),z'\rangle_{Z,Z'}\in L^{1}([0,t_f])$, $t\mapsto\langle Ex(t),z'\rangle_{Z,Z'}\in W^{1,1}([0,t_f])$ with weak derivative fulfilling
\begin{equation}\ddts\langle Ex(t),z'\rangle=\langle x(t),A'z'\rangle+\langle f(t),z'\rangle    \label{eq:weaksol}\end{equation}
for almost all $t\in[0,t_f]$ and all $z'\in \dom A'$.
\end{itemize}
We call $x_0\in X$ a {\em classical (mild, weak) consistent initial value} for $\ddts Ex(t)=Ax(t)+f(t)$, if \eqref{adae} has a~classical (mild, weak) solution.
\end{definition}
Note that the notion of weak solutions has been used in \cite{ReisTisc05}, whereas, in the articles \cite{TrosWaur19,TrosWaur18}, mild solutions are studied. The definition of weak solution involves the dual of $A$, which is given by the operator $A':Z'\supset \dom A'\to X'$ which is defined on the set of bounded linear functionals $z'\in Z'$ for which the functional $\dom A\to\C:x\mapsto \langle Ax,z'\rangle$ has a~bounded extension to $X$, that is,
\[\dom A'=\{z'\in Z'\,:\,\exists c>0\text{ s.t.\ }|\langle Ax,z'\rangle|\leq c\,\|x\|\,\forall x\in X\}.\]
If $A$ is densely defined, then $A'$ is well-defined by
\[\langle x, A'z'\rangle_{X,X'}=\langle Ax,z\rangle_{Z,Z'}\;\forall\, x\in\dom A,\,z'\in\dom A'.\]
\begin{remark}
The definition of classical, mild and weak solutions can be easily extended to the infinite time horizon $[0,\infty)$. Namely, we say that $x:[0,\infty)\to X$ is a~classical (mild, weak) solution of \eqref{adae}, if, for all $t_f>0$, the restriction of $x$ to $[0,t_f]$ is a~classical (mild, weak) solution of \eqref{adae}. For mild solutions, we have to impose that
$f\in L^{1}_{\rm loc}([0,\infty);X)$, whereas for weak solutions $f:[0,\infty)\to X$ has to fulfill that $t\mapsto \langle f(t),z'\rangle\in L^{1}_{\rm loc}([0,\infty))$ for all $z'\in Z'$.
\end{remark}
Next we show the relation between these solution concepts.

\begin{theorem}
Let $X$, $Z$ be Banach spaces, let $E\in L(X,Z)$ and $A:X\supset\dom A\to Z$ be linear and closed. Further, let $t_f>0$, $x_0\in X$ and $f:[0,t_f]\to Z$. Then the following holds:
\begin{enumerate}[\rm (a)]
\item If $f\in L^1([0,t_f];Z)$, and $x:[0,t_f]\to X$ is a classical solution of \eqref{adae}, then $x$ is a mild solution of \eqref{adae}.
\item If $t\mapsto \langle f(t),z'\rangle \in L^1([0,t_f])$ for all $z'\in Z'$, $A$ is densely defined, and $x:[0,t_f]\to X$ is a classical solution of \eqref{adae}, then $x$ is a~weak solution.
\item If $A$ is densely defined, and $x:[0,t_f]\to X$ is a mild solution of \eqref{adae}, then $x$ a weak solution of \eqref{adae}.
\item If, $X$ and $Z$ are reflexive, $A$ is densely defined, $f\in L^1([0,t_f];Z)$ and $x:[0,t_f]\to X$ is a weak solution of \eqref{adae}, then $x$ is a~mild solution of \eqref{adae}.
\end{enumerate}
\end{theorem}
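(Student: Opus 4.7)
My plan is to establish (a)--(c) as routine interchanges of bounded functionals or closed operators with Bochner integrals, while (d)---the reverse direction from weak to mild, available only in the reflexive setting---is the genuine work.

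For (a), rewrite the classical identity as $Ax(\tau)=\ddts Ex(\tau)-f(\tau)$; since $x(\tau)\in\dom A$ on $(0,t_f)$ and $f\in L^1$, integration on $[0,t]$ via the fundamental theorem of calculus in $Z$, together with closedness of $A$ to move $A$ outside the Bochner integral, yields the mild identity \eqref{eq:mildsol}. For (b), pair the classical identity with $z'\in\dom A'$ and use that $z'$, being bounded, commutes with $\ddts$, giving $\ddts\scpr{Ex(t)}{z'}=\scpr{x(t)}{A'z'}+\scpr{f(t)}{z'}$ on $(0,t_f)$; the integrability of $\scpr{f(\cdot)}{z'}$ together with continuity of $\scpr{Ex(\cdot)}{z'}$ upgrades this to $W^{1,1}([0,t_f])$-regularity and delivers \eqref{eq:weaksol}. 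For (c), apply $z'\in\dom A'$ to \eqref{eq:mildsol}; by the definition of $A'$ one has $\scpr{A\int_0^t x\,\dtau}{z'}=\int_0^t\scpr{x(\tau)}{A'z'}\,\dtau$, so $\scpr{Ex(\cdot)}{z'}$ is a primitive of an $L^1$ function and differentiation gives \eqref{eq:weaksol}.

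The genuine obstacle is (d). Integrating \eqref{eq:weaksol} on $[0,t]$ yields, for every $z'\in\dom A'$,
\[
\scpr{y(t)}{z'}\;=\;\int_0^t\scpr{x(\tau)}{A'z'}\,\dtau,\qquad y(t):=Ex(t)-Ex_0-\int_0^t f(\tau)\,\dtau.
\]
The target is to show that $w(t):=\int_0^t x(\tau)\,\dtau$ exists in $X$, lies in $\dom A$, and satisfies $Aw(t)=y(t)$. Reflexivity enters twice. First, for densely defined closed $A$ on reflexive spaces, $A'$ is densely defined and $A''=A$, so membership $w\in\dom A$ with $Aw=y$ is equivalent to $\scpr{w}{A'z'}=\scpr{y}{z'}$ for all $z'\in\dom A'$; this is exactly what the displayed identity delivers once $w(t)$ is realized as a vector in $X$. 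Second, reflexivity combined with the uniform bound
\[
\|\scpr{x(\cdot)}{A'z'}\|_{L^1}\;\le\;\mathrm{TV}\bigl(\scpr{Ex(\cdot)}{z'}\bigr)+\|\scpr{f(\cdot)}{z'}\|_{L^1}\;\le\;C\,\|z'\|,
\]
together with the identification $X\cong X''$, permits one to define $w(t)$ first as a weak integral against $A'$-images and then, via Pettis's theorem, to upgrade it to a Bochner integral of $x$ in $X$. The step I expect to require most care is precisely this upgrade from weak pairings against $\dom A'$ to Bochner integrability of $x$ in $X$, for which reflexivity of both $X$ and $Z$ is indispensable; once it is in hand, the duality characterization of $\dom A$ closes the argument.
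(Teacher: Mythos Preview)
Your handling of (a)--(c) is essentially identical to the paper's: integrate and use closedness of $A$ to pull it through the Bochner integral for (a), pair the classical identity with $z'\in\dom A'$ for (b), and apply $z'\in\dom A'$ to \eqref{eq:mildsol} and differentiate for (c).

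For (d), your skeleton also matches the paper's: integrate \eqref{eq:weaksol} over $[0,t]$, read the resulting identity as $\int_0^t x\,\dtau\in\dom A''$ with $A''\!\int_0^t x\,\dtau=Ex(t)-Ex_0-\int_0^t f\,\dtau$, invoke reflexivity of $X$ for $A''=A$, and use reflexivity of $X,Z$ together with $A$ densely defined and closed to get $\dom A'$ dense in $Z'$ (Kato), which turns the identity tested against $\dom A'$ into \eqref{eq:mildsol}. Where you diverge is in producing the Bochner integral $\int_0^t x(\tau)\,\dtau\in X$. The paper does not pursue any bound-and-Pettis route; it simply takes $x:[0,t_f]\to X$ to be continuous and invokes Pettis's measurability theorem to conclude Bochner integrability directly, after which the pairing commutes with the integral.

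Your proposed alternative has a gap. The estimate $\|\langle x(\cdot),A'z'\rangle\|_{L^1}\le \mathrm{TV}(\langle Ex(\cdot),z'\rangle)+\|\langle f(\cdot),z'\rangle\|_{L^1}\le C\|z'\|$ (which does hold, via a closed-graph argument applied to the map $Z'\ni z'\mapsto\langle Ex(\cdot),z'\rangle\in W^{1,1}$) controls only pairings of $x$ against elements of $\ran A'\subset X'$. Pettis integrability of $x$ in $X$ requires $\langle x(\cdot),x'\rangle\in L^1$ for \emph{every} $x'\in X'$, and $\ran A'$ is dense in $X'$ only if $\ker A=\{0\}$, which is not assumed. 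So the ``upgrade from weak pairings against $\dom A'$ to Bochner integrability of $x$ in $X$'' that you correctly flag as the delicate point is not delivered by the estimate you write down; the paper bypasses this entirely with the continuity assertion on $x$.
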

\begin{proof}
\begin{enumerate}[(a)]
\item  As a~consequence of Pettis's measurability theorem \cite[Thm.~II.1.2]{Diestel77} and the continuity of $Ex:[0,t_f]\to Z$, we have that $Ex$ is Bochner integrable. Now an integration of \eqref{adae} of $[0,t]$ for $0\leq t\leq t_f$ yields
\[Ex(t)-Ex(0)=\int_0^tAx(\tau)\dtau+\int_0^tf(\tau)\dtau.\]
Since, by \cite[Prop.~1.1.7.]{ArenBatt11}, we have $\int_0^tx(\tau)\dtau\in \dom A$ with
\[A\int_0^tx(\tau)\dtau=\int_0^tAx(\tau)\dtau,\]
we immediately obtain that $x$ is a~mild solution.
\item Assume that $t\mapsto \langle f(t),z'\rangle \in L^1([0,t_f])$ for all $z'\in Z'$, $A$ is densely defined, and $x:[0,t_f]\to X$ is a classical solution, and let $z'\in \dom A'$. Now applying $z'$ to $\ddts Ex=Ax+f$ and using the definition of $A'$, we obtain \eqref{eq:weaksol}. In particular, $x$ is a~weak solution.
\item Assume that $A$ is densely defined, $x:[0,t_f]\to X$ is a mild solution of \eqref{adae}, and let $z'\in \dom A'$. Then, by applying $z'$ to \eqref{eq:mildsol}, we obtain
\begin{multline*}\langle Ex(t),z'\rangle_{Z,Z'}-\langle Ex_0,z'\rangle_{Z,Z'}\\=\left\langle A\int_0^tx(\tau)\dtau,z'\right\rangle_{Z,Z'}+\left\langle\int_0^tf(\tau)\dtau,z'\right\rangle_{Z,Z'}.
\end{multline*}
Hence, the weak form of the fundamental theorem of calculus \cite[Thm.~E3.6]{Alt16} implies that $\langle Ex(\cdot),z'\rangle_{Z,Z'}\in W^{1,1}_{\rm loc}([0,t_f])$ and that \eqref{eq:weaksol} holds on $[0,t_f]$. That is, $x$ is a~weak solution.\\
\item
Assume that $X$ and $Z$ are reflexive, $A$ is densely defined, $f\in L^1([0,t_f];Z)$ and $x:[0,t_f]\to X$ is a weak solution of \eqref{adae}.
By another use of the weak form of the fundamental theorem of calculus \cite[Thm.~E3.6]{Alt16}, we obtain from an integration of \eqref{eq:weaksol} on $[0,t]$ for $0\leq t\leq t_f$ that
\[\langle Ex(t),z'\rangle-\langle Ex(0),z'\rangle=\int_0^t\langle x(\tau),A'z'\rangle \dtau+\int_0^t\langle f(\tau),z'\rangle \dtau.\]
The property $f\in L^1([0,t_f];Z)$ gives
\[\int_0^t\langle f(\tau),z'\rangle d\tau=\left\langle\int_0^t f(\tau),z' d\tau\right\rangle.\]
Since $x:[0,t_f]\to X$ is continuous, we can infer from Pettis's measurability theorem \cite[Thm.~II.1.2]{Diestel77} that $x(\cdot)$ is Bochner integrable, and thus
\[\int_0^t\langle x(\tau),A'z'\rangle d\tau=\left\langle \int_0^tx(\tau)d\tau,A'z'\right\rangle,\]
which gives
\[\left\langle Ex(t)- Ex(0)-\int_0^tf(\tau) d\tau,z'\right\rangle=\left\langle \int_0^tx(\tau)d\tau,A'z'\right\rangle\quad\forall z'\in \dom A'.\]
Consequently, the functional $\dom A'\to\C$ with $z'\mapsto\langle \int_0^tx(\tau)d\tau,A'z'\rangle$ has a~bounded extension to $Z'$, whence $\int_0^tx(\tau)d\tau\in \dom A''$. Now the reflexivity of $X$ leads to $\int_0^tx(\tau)d\tau\in \dom A$ with
\begin{equation}\left\langle Ex(t)- Ex(0)-A\int_0^tx(\tau)d\tau-\int_0^tf(\tau) d\tau,z'\right\rangle=0\quad\forall z'\in \dom A'.\label{eq:ADAE_mildweak}\end{equation}
Further, by using that $X$ and $Z$ are reflexive and $A$ is densely defined, we obtain from \cite[Thm.~III.5.29]{Kato80} that $A'$ is densely defined as well. Consequently, \eqref{eq:ADAE_mildweak} implies \eqref{eq:mildsol}.
\end{enumerate}
 \end{proof}

\section{Main concepts}
\label{sec:mainconcept}
The first concept that will be of major importance is that of {\em linear relations}, which will be more intensively treated in Section~\ref{sec:rel_res}.
\begin{definition}[Linear relation]
Let $X$ be a~normed space. A {\em (closed) linear relation $L$ in $X$} is a~(closed) subspace $L$ of $X\times X$.
\end{definition}
Linear operators can be identified as linear relations via their graphs. Hence linear relations can be interpreted as multi-valued linear operators. In the context of ADAEs \eqref{adae}, the linear relations
\begin{align}\label{eq:Ldef}
\begin{split}
L_l&=\,\setdef{(y,z)\in Z\times Z}{\exists\, x\in \dom A\;\text{s.t.}\; y=Ex\,\wedge z=Ax},\\
L_r&=\,\setdef{(x,w)\in \dom A\times X}{ Ew=Ax},
\end{split}
\end{align}
will be used. Next, we show that classical and mild solutions can be expressed by means of the linear relations $L_r$ and $L_l$.
\begin{theorem}
Let $X$, $Z$ be normed spaces, let $E:X\to Z$ be linear and bounded, and let  $A:X\supset\dom A\to Z$ be linear, densely defined and closed, let $t_f>0$, $f:[0,t_f]\to Z$ and $x_0\in X$. Further, let $L_l\subset Z\times Z$ and be defined as in \eqref{eq:Ldef}.
\begin{enumerate}[\rm (a)]
\item Let $x:[0,t_f]\rightarrow X$ be a classical solution of the ADAE \eqref{adae}. Then
\begin{align*}
Ex(0)=Ex_0\text{ and }\left(Ex(t),\ddts Ex(t)-f(t)\right)\in L_l\;\;\forall  t\in[0,t_f].
\end{align*}
\item Conversely, if $z\in C^1((0,t_f);Z)\cap C([0,t_f];Z)$ with $z(0)=Ex_0$ fulfills
\begin{align}
\label{inLl}
\left(z(t),\ddts z(t)-f(t)\right)\in L_l\;\;\forall  t\in[0,t_f],
\end{align}
then there exists some $x:[0,t_f]\to\dom A$ with $z(t)=Ex(t)$ for all $t\in[0,t_f]$. This function $x$ is a~classical solution of \eqref{adae}.
\item Assume that the ADAE \eqref{adae} is regular, let $f\in L^1([0,t_f];Z)$ and $\lambda\in\rho(E,A)$. If $x:[0,t_f]\to X$ is a~mild solution of \eqref{adae}, then 
\begin{multline}
\left(\int_{0}^t x(\tau)d\tau-(\lambda E-A)^{-1}\int_0^t f(\tau)\dtau\right. ,\\\qquad\qquad \left. x(t)-x_0-\lambda (\lambda E-A)^{-1}\int_0^t f(\tau)\dtau\right)\in L_r
\text{ for a.a.\ }t\in[0,t_f].
\label{eq:mildsolrel}\end{multline}
\item  Assume that the ADAE \eqref{adae} is regular, $f\in L^1([0,t_f];Z)$ and $\lambda\in\rho(E,A)$. If $x\in L^1([0,t_f];X)$ fulfills \eqref{eq:mildsolrel},
then $x$ is a~mild solution of \eqref{adae}.
\end{enumerate}
\end{theorem}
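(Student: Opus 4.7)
My plan is to treat (a) and (b) as direct unpacking of the defining relation $L_l$, and to handle (c) and (d) together as a single algebraic equivalence driven by the resolvent identity
\[A(\lambda E-A)^{-1}=\lambda E(\lambda E-A)^{-1}-I,\]
which holds on all of $Z$ because $\ran(\lambda E-A)^{-1}\subseteq\dom A$ and $(\lambda E-A)(\lambda E-A)^{-1}=I_Z$.

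For (a), a classical solution satisfies $x(t)\in\dom A$ and $\ddts Ex(t)-f(t)=Ax(t)$ for $t\in(0,t_f)$, so the pair $(Ex(t),\ddts Ex(t)-f(t))$ has $x(t)$ itself as witness in the definition of $L_l$, and the initial condition is assumed. For (b), for each $t\in(0,t_f)$ the relation \eqref{inLl} provides, by a pointwise selection, some $x(t)\in\dom A$ with $Ex(t)=z(t)$ and $Ax(t)=\ddts z(t)-f(t)$; at the endpoints I would take $x(0):=x_0$ and choose any $x(t_f)$ with $Ex(t_f)=z(t_f)$, which is possible because the hypothesis forces $z(t_f)\in\ran (E|_{\dom A})$. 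No continuity of $x$ itself is required, since the classical-solution concept only constrains the regularity of $Ex=z$.

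For (c) and (d), abbreviate $u(t):=\int_0^tx(\tau)\,\dtau$, $v(t):=\int_0^tf(\tau)\,\dtau$ and $R_\lambda:=(\lambda E-A)^{-1}$. Since $R_\lambda v(t)\in\dom A$ for every $t$, the first component $u(t)-R_\lambda v(t)$ lies in $\dom A$ iff $u(t)$ does; and the equality of $A$ applied to the first component with $E$ applied to the second, namely
\[A\bigl(u(t)-R_\lambda v(t)\bigr)=E\bigl(x(t)-x_0-\lambda R_\lambda v(t)\bigr),\]
collapses after substituting $AR_\lambda v(t)=\lambda ER_\lambda v(t)-v(t)$ to
\[Au(t)+v(t)=Ex(t)-Ex_0,\]
which is exactly \eqref{eq:mildsol}. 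Thus \eqref{eq:mildsolrel} and \eqref{eq:mildsol} are equivalent as a.e.\ statements, yielding both (c) and (d) at the level of the integral equation.

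The point I expect to require the most care is in (d): the mild-solution concept requires $Ex\in C([0,t_f];Z)$, whereas only $x\in L^1([0,t_f];X)$ is assumed. Here my plan is to exploit that $v$ is continuous in $Z$, that $R_\lambda v(t)=\int_0^tR_\lambda f(\tau)\,\dtau$ is continuous in $X$ (since $R_\lambda\in L(Z,X)$), and hence that $AR_\lambda v(t)=(\lambda ER_\lambda-I)v(t)$ is continuous in $Z$; combined with the derived a.e.\ identity $Ex(t)=Ex_0+Au(t)+v(t)$, this allows one to modify $x$ on a null set — without disturbing $u$, $v$, or \eqref{eq:mildsolrel} — and thereby produce a representative for which $Ex$ is continuous on $[0,t_f]$. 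This continuity-and-representative bookkeeping is the main technical obstacle; the purely algebraic part is essentially a one-line manipulation of the resolvent identity.
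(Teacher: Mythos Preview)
Your argument mirrors the paper's proof: parts (a) and (b) are direct unpacking of $L_l$, and (c)--(d) proceed via exactly the same resolvent manipulation $A(\lambda E-A)^{-1}=\lambda E(\lambda E-A)^{-1}-I_Z$, which the paper simply writes out twice (once per direction) rather than packaging as a single equivalence. Your additional concern in (d) about the requirement $Ex\in C([0,t_f];Z)$ is in fact more scrupulous than the paper's own proof, which derives the integral identity \eqref{eq:mildsol} from \eqref{eq:mildsolrel} and stops without addressing continuity; note, however, that your sketched fix does not quite close this gap either, since continuity of $AR_\lambda v$ alone does not yield continuity of $Au$, and rewriting $A(u-R_\lambda v)$ via the $L_r$ relation reintroduces $Ex$ on the right-hand side.
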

\begin{proof}
\begin{enumerate}[(a)]
\item Let $x:[0,t_f]\rightarrow X$ be a classical solution of the ADAE \eqref{adae}. Then $Ex(0)=Ex_0$, and for all $t\in[0,t_f]$, we have
\[\left(Ex(t),\ddts Ex(t)-f(t)\right)=\left(Ex(t),Ax(t)\right)\in L_l.\]
\item Assume that $z\in C^1((0,t_f);Z)\cap C([0,t_f];Z)$ fulfills $z(0)=Ex_0$ and \eqref{inLl}. Using the definition of $L_l$, we obtain that for all $t\in[0,t_f]$, there exists some $x(t)\in\dom A$, such that
\[z(t)=Ex(t)\text{ and }\ddts z(t)-f(t)=Ax(t).\]
Then $Ex\in C^1((0,t_f);Z)\cap C([0,t_f];Z)$ fulfills $Ex(0)=Ex_0$ and $\ddts Ex(t)-f(t)=Ax(t)$, whence $x$ is a~classical solution of \eqref{adae}.
\item 
Let $f\in L^1([0,t_f];Z)$, $x_0\in X$, $\lambda\in\rho(E,A)$, and let $x:[0,t_f]\to X$ be a~mild solution of \eqref{adae}. Then, for almost all $t \in[0,t_f]$,
\[\begin{aligned}
&E\left(x(t)-x_0-\lambda (\lambda E-A)^{-1}\int_0^t f(\tau)\dtau\right)\\
=&A\int_0^tx(\tau)d\tau+\int_0^tf(\tau)\dtau-\lambda E(\lambda E-A)^{-1}\int_0^tf(\tau)\dtau\\
=&A\int_0^tx(\tau)d\tau+(\lambda E-A-\lambda E)(\lambda E-A)^{-1}\int_0^tf(\tau)\dtau\\
=&A\int_0^tx(\tau)d\tau-A(\lambda E-A)^{-1}\int_0^tf(\tau)\dtau\\
=&A\left(\int_0^tx(\tau)d\tau-(\lambda E-A)^{-1}\int_0^tf(\tau)\dtau\right),
\end{aligned}\]
which shows that \eqref{eq:mildsolrel} holds. 

\item 
Let $f\in L^1([0,t_f];Z)$, $x_0\in X$, $\lambda\in\rho(E,A)$ and assume that $x\in L^1([0,t_f];X)$ fulfills \eqref{eq:mildsolrel}.
By $(\lambda E-A)^{-1}\int_0^tf(\tau)\dtau\in \dom A$ for almost all $t\in[0,t_f]$, we have that $\int_0^tx(\tau)\dtau\in \dom A$ for almost all $t\in[0,t_f]$. Further, \eqref{eq:mildsolrel} leads to
\[\begin{aligned}
&Ex(t)-Ex_0\\=&\,E\left(x(t)-x_0-\lambda (\lambda E-A)^{-1}\int_0^t f(\tau)\dtau\right)+\lambda E(\lambda E-A)^{-1}\int_0^t f(\tau)\dtau\\
=&\,A\left(\int_{0}^t x(\tau)d\tau-(\lambda E-A)^{-1}\int_0^t f(\tau)\dtau\right)+\lambda E(\lambda E-A)^{-1}\int_0^t f(\tau)\dtau\\
=&\,A\int_{0}^t x(\tau)d\tau-A(\lambda E-A)^{-1}\int_0^t f(\tau)\dtau+\lambda E(\lambda E-A)^{-1}\int_0^t f(\tau)\dtau\\
=&\,A\int_{0}^t x(\tau)d\tau+\int_0^t f(\tau)\dtau,
\end{aligned}\]
which shows that $x$ is a mild solution of \eqref{adae}.

\end{enumerate}
\hfill \end{proof}

If $0\in\rho(A)$, then  \eqref{eq:mildsolrel} can be simplified to
\[
\left(\int_{0}^t x(\tau)d\tau+A^{-1}\int_0^t f(\tau)\dtau,x(t)-x_0\right)\in L_r
\text{ for a.a.\ }t\in[0,t_f].
\]

The following result indeed allows to restrict regular ADAEs to ones in which $A$ is invertible.
\begin{lemma}\label{lem:shift}
Let $X$, $Z$ be normed spaces, let $E:X\to Z$, $A:X\supset\dom A\to Z$ be linear, let $t_f>0$, $f:[0,t_f]\to Z$, $x_0\in X$. Let $\mu\in\C$ and define $f_\mu:[0,t_f]\to X$ with
$f_{\mu}(t):=e^{-\mu t}f(t)$ for all $t\in[0,t_f]$. Then the following holds:
\begin{enumerate}[\rm (a)]
\item $x:[0,t_f]\to X$ is a classical solution of \eqref{adae} if, and only if, $x_\mu:[0,t_f]\to X$ with
$x_{\mu}(t):=e^{-\mu t}x(t)$ is a classical solution of
\begin{align}
    \label{mu_dgl}
    \tfrac{\rm d}{{\rm d} t} Ex_{\mu}(t)=(A-\mu E)x_{\mu}(t)+f_{\mu}(t),\quad Ex_\mu(0)=Ex_0.
\end{align}
\item If, moreover, $E$ is bounded, $A$ is closed and $f\in L^1([0,t_f];Z)$, then 
$x:[0,t_f]\to X$ is a weak solution of \eqref{adae} if, and only if, $x_\mu:[0,t_f]\to X$ with
$x_{\mu}(t):=e^{-\mu t}x(t)$ is a weak solution of \eqref{mu_dgl}.
\item If, moreover, $E$ is bounded, $A$ is densely defined and closed, and $f\in L^1([0,t_f];Z)$, then 
$x:[0,t_f]\to X$ is a mild solution of \eqref{adae} if, and only if, $x_\mu:[0,t_f]\to X$ with
$x_{\mu}(t):=e^{-\mu t}x(t)$ is a mild solution of \eqref{mu_dgl}.
\end{enumerate}
\end{lemma}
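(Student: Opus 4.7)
The unifying observation is the formal identity
\begin{equation*}
\tfrac{\mathrm{d}}{\mathrm{d}t} E x_{\mu}(t) - (A-\mu E) x_{\mu}(t) - f_{\mu}(t) = e^{-\mu t}\Bigl(\tfrac{\mathrm{d}}{\mathrm{d}t} Ex(t) - A x(t) - f(t)\Bigr),
\end{equation*}
coming from the product rule, together with $Ex_\mu(0)=Ex(0)$. The reverse implication in all three parts is obtained by applying the forward implication to $x_\mu$, $f_\mu$ with shift parameter $-\mu$, so I only need to prove the direct direction.

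Part (a) is immediate: since $t\mapsto e^{-\mu t}$ is smooth, $Ex \in C^1((0,t_f);Z)\cap C([0,t_f];Z)$ is equivalent to $Ex_\mu$ lying in the same space; differentiating and using $\tfrac{\mathrm{d}}{\mathrm{d}t}Ex = Ax+f$ yields the shifted ODE. For part (b), fix $z'\in \dom A'$. The scalar function $t\mapsto \langle Ex(t),z'\rangle$ lies in $W^{1,1}([0,t_f])$, hence so does its product with the smooth scalar $e^{-\mu t}$, which equals $t\mapsto\langle Ex_\mu(t),z'\rangle$. Applying the scalar product rule and substituting the weak equation \eqref{eq:weaksol} for $x$ gives
\begin{equation*}
\tfrac{\mathrm{d}}{\mathrm{d}t}\langle Ex_\mu(t),z'\rangle = e^{-\mu t}\bigl(\langle x(t),A'z'\rangle + \langle f(t),z'\rangle\bigr) - \mu\langle Ex_\mu(t),z'\rangle = \langle x_\mu(t),(A-\mu E)'z'\rangle + \langle f_\mu(t),z'\rangle,
\end{equation*}
which is the weak equation for \eqref{mu_dgl} (noting $(A-\mu E)' = A' - \bar\mu E'$ on $\dom A'$, since $E$ is bounded).

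Part (c) is the main technical step. Given a mild solution $x$, set $\tilde G(t):=\int_0^t x(\tau)\,\dtau$ and $F(t):=\int_0^t f(\tau)\,\dtau$. Then $A\tilde G(t) = Ex(t) - Ex_0 - F(t)$ is continuous, hence in $L^\infty([0,t_f];Z)\subseteq L^1$. Differentiating both sides shows
\begin{equation*}
\int_0^t x_\mu(\tau)\,\dtau = e^{-\mu t}\tilde G(t) + \mu\int_0^t e^{-\mu s}\tilde G(s)\,\mathrm{d}s.
\end{equation*}
The first summand is in $\dom A$ a.e.; for the second, the integrand $s\mapsto e^{-\mu s}\tilde G(s)$ takes values in $\dom A$ for a.a.\ $s$, and its image under $A$ is in $L^1$, so by \cite[Prop.~1.1.7]{ArenBatt11} the integral lies in $\dom A$ with $A$ pulling through. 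Computing $(A-\mu E)\int_0^t x_\mu\,\dtau$ and using $E\tilde G(t) = \int_0^t Ex(\tau)\,\dtau$ (since $E$ is bounded), one assembles the expression $e^{-\mu t}Ex(t) - Ex_0 - e^{-\mu t}F(t) - \mu\int_0^t e^{-\mu s}F(s)\,\mathrm{d}s$. The identity $\int_0^t f_\mu(\tau)\,\dtau = e^{-\mu t}F(t) + \mu\int_0^t e^{-\mu s}F(s)\,\mathrm{d}s$ (integration by parts in the scalar Bochner sense) finally recovers \eqref{eq:mildsol} for $x_\mu$.

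The principal obstacle is verifying that $\int_0^t e^{-\mu s}\tilde G(s)\,\mathrm{d}s$ genuinely lies in $\dom A$ and that $A$ may be exchanged with the integral; this is exactly where one needs both the closedness of $A$ and the continuity of $A\tilde G$ obtained from the mild equation for $x$. Once this is in hand, the remainder is just the bookkeeping of the two scalar integration-by-parts identities above.
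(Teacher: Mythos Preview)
Your proof is correct and follows essentially the same route as the paper: the reduction to one direction via the shift $-\mu$, the product rule for (a) and (b), and for (c) the integration-by-parts identity $\int_0^t x_\mu = e^{-\mu t}\tilde G(t)+\mu\int_0^t e^{-\mu s}\tilde G(s)\,\mathrm{d}s$ together with closedness of $A$ to pull $A$ through the Bochner integral. One cosmetic slip: with the bilinear duality pairing used in the paper, $(A-\mu E)'=A'-\mu E'$ rather than $A'-\bar\mu E'$, but this does not affect the argument.
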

\begin{proof} By reversing the roles of $x$ and $x_\mu$, it suffices to assume that $x$ being a classical (weak, mild) solution of \eqref{adae} implies that $x_\mu$ is a 
classical (weak, mild) solution of \eqref{mu_dgl}.\\
The statement for classical solutions follows immediately from the product rule $\ddt Ex_\mu(t)=-\mu Ex_\mu(t)+e^{-\mu t}\ddt Ex(t)$. Likewise, the result for weak solutions follows 
from the product rule for weak derivatives \cite[p.~124]{Alt16}, which gives \[\ddts\langle Ex_\mu(t),z'\rangle=-\mu \langle Ex_\mu(t),z'\rangle+e^{-\mu t}\ddts\langle Ex(t),z'\rangle\quad\forall z'\in Z'.\]
To prove the remaining statement, assume that $x$ is a~mild solution of \eqref{adae}. Then an integration by parts yields that for almost all $t\in[0,t_f]$,
\[\int_0^tx_\mu(\tau)\dtau
=\int_0^te^{-\mu t}x(\tau)\dtau
=\mu\int_0^te^{-\mu \tau}\int_0^\tau x(\sigma){\rm d}\sigma\dtau+e^{-\mu t}\int_0^tx(\tau){\rm d}\tau.
\]
Now using the closedness of $A$, we obtain that $\int_0^tx_\mu(\tau)\dtau\in\dom A$ for almost all $t\in[0,t_f]$, and
\begin{align}
\label{Arein}
A\int_0^tx_\mu(\tau)\dtau=\mu\int_0^te^{-\mu \tau}A\int_0^\tau x(\sigma){\rm d}\sigma\dtau+e^{-\mu t}A\int_0^tx(\tau){\rm d}\tau.
\end{align}
Invoking \eqref{eq:mildsol}, \eqref{Arein} and the following integration by parts
\[\int_0^tf_\mu(\tau)\dtau
=\mu\int_0^te^{-\mu \tau}\int_0^\tau f(\sigma){\rm d}\sigma\dtau+e^{-\mu t}\int_0^tf(\tau){\rm d}\tau,
\]
we obtain
\[\begin{aligned}
&(A-\mu E)\int_0^tx_\mu(\tau)\dtau+\int_0^tf_\mu(\tau)\dtau\\
=&\mu\int_0^te^{-\mu \tau}A\int_0^\tau x(\sigma){\rm d}\sigma\dtau+e^{-\mu t}A\int_0^tx(\tau){\rm d}\tau-\mu E\int_0^tx_\mu(\tau)\dtau+\int_0^tf_\mu(\tau)\dtau\\
=&\mu\int_0^te^{-\mu \tau}\left(Ex(\tau)-Ex_0-\int_0^\tau f(\sigma){\rm d}\sigma\right)\dtau\\&\qquad+e^{-\mu t}\left(Ex(t)-Ex_0-\int_0^tf(\tau)\dtau\right)-\mu E\int_0^tx_\mu(\tau)\dtau+\int_0^tf_\mu(\tau)\dtau\\
=&\mu\int_0^tEx_\mu(\tau)\dtau+(e^{-\mu \tau}-1)Ex_0-\mu\int_0^te^{-\mu\tau}\int_0^\tau f(\sigma){\rm d}\sigma\dtau\\&\qquad+e^{-\mu t}Ex(t)-e^{-\mu t}Ex_0-e^{-\mu t}\int_0^tf(\tau)\dtau-\mu E\int_0^tx_\mu(\tau)\dtau+\int_0^tf_\mu(\tau)\dtau\\
=&E x_\mu(t)-Ex_0-\mu\int_0^te^{-\mu\tau}\int_0^\tau f(\sigma){\rm d}\sigma\dtau-e^{-\mu t}\int_0^tf(\tau)\dtau+\int_0^tf_\mu(\tau)\dtau\\
=&E x_\mu(t)-Ex_0,
\end{aligned}\]
which yields that $x_\mu$ is a~mild solution of
\eqref{mu_dgl}.
\end{proof}
Another important concept in conjunction with regular ADAEs are  {\em pseudo-resolvents} in the sense of Hille \cite{Hill48}, see also \cite{Haas06,HillPhil96,Kato59}.
\begin{definition}[Pseudo-resolvent]\label{def:res}
Let $X$ be a Banach space and $\Omega\subset\C$ be open. A mapping $R:\Omega\rightarrow L(X)$ is called a \textit{pseudo-resolvent}, if the following {\em resolvent identity} holds
\begin{align}
\label{pseudoresid}
\frac{R(\lambda)-R(\mu)}{\mu-\lambda}=R(\lambda)R(\mu),\quad \text{for all $\lambda,\mu\in\Omega$, $\lambda\neq\mu$.}
\end{align}
\end{definition}
Note that pseudo-resolvents are holomorphic with $R'(\lambda)=-R(\lambda)^2$. Further, by using
\[R(\mu)=R(\lambda)(I-(\lambda-\mu)R(\lambda))^{-1},\]
we can use the Neumann series \cite[Sec.~5.7]{Alt16} to obtain that a~power series expansion of the resolvent at $\lambda_0\in\Omega$ is given by
\begin{equation}\label{eq:powser}
R(\mu)=\sum_{n=0}^\infty (\lambda_0-\lambda)^n R(\lambda_0)^{n+1}.
\end{equation}
Any regular ADAEs can be associated with the {\em left and right resolvents}
\begin{align}
\label{def:RrRl}
\begin{aligned}
R_l:&&\rho(E,A)\to&\, L(Z),&& \text{with}\quad R_l(\lambda)=E(A-\lambda E)^{-1},\\
R_r:&&\rho(E,A)\to&\, L(X)&& \text{with}\quad R_r(\lambda)=(A-\lambda E)^{-1}E,
\end{aligned}\end{align}
and it is straightforward to verify that these mappings are pseudo-resolvents in the sense of Definition~\ref{def:res}.\\
By using Lemma~\ref{lem:shift}, we see that, for $\mu\in\rho(E,A)$, the (classical, weak, mild) solutions $x$ of the ADAE \eqref{adae} are connected to those of the \textit{pseudo-resolvent differential equation}
\begin{align}
\label{psresDGL2_int}
\tfrac{\rm d}{{\rm d}t}R_r(\mu)x_{\mu}(t)=x_{\mu}(t)+g_{\mu}(t),
\end{align}
via
 $x_{\mu}(t)=e^{-\mu t}x(t)$ and $g_{\mu}(t)=e^{-\mu t}(A-\mu E)^{-1}f(t)$ for all $t\in I$.
Our solution theory for ADAEs presented in this article is indeed based on the solution of pseudo-resolvent differential equations~\eqref{psresDGL2_int} (or an analogous pseudo-resolvent differential equation for $R_l$).

We further use the close connection between pseudo-resolvents and \textit{degenerate semigroups}. 
\begin{definition}[Degenerate semigroup]
Let $X$ be a Banach space. A strongly continuous mapping $T:(0,\infty)\rightarrow L(X)$ that satisfies
\begin{align}
\label{degsg}
T(t+s)=T(t)T(s),\quad \text{for all $s,t>0$,}
\end{align}
and $\sup_{0< t\leq 1}\|T(t)\|<\infty$ is called \textit{degenerate semigroup}.
\end{definition}
We will oftentimes assume that $X$ is reflexive. In this case, it can be concluded from \cite[Cor.~2.2]{Aren01} that the strong limit $T(0):=\lim_{\tau\searrow\,0}T(\tau)$ exists and therefore $T(0)$ is bounded. The property \eqref{degsg} implies that $T(0)$ is a bounded projector onto $X_0:=\ran T(0)$. The restriction  $(T(t)|_{X_0})_{t\geq 0}$ is then a strongly continuous semigroup on $X_0$. Further, for $X_1=\ran(I-T(0))$, we have $T(t)|_{X_1}=0$ for all $t\geq 0$.
Further note that for any degenerate semigroup $(T(t))_{t\geq 0}$ there exist $\omega\in\R$, $M\geq 1$, such that
\[
\|T(t)\|\leq Me^{\omega t}\quad \forall\,t\geq 0,
\]
which we refer to as {\em $\omega$-stability} of $(T(t))_{t\geq 0}$. If $\omega<0$ we say that the semigroup is \textit{exponentially stable}. For an $\omega$-stable degenerate semigroup, the Laplace transform defines a pseudo-resolvent
\begin{align}
\label{lapalce_pseudo}
R(\lambda)z:=\int_0^\infty e^{-\lambda t}T(t)z{\rm d}t\quad\text{for all $\lambda\in\Cpo$,}
\end{align}
see  \cite[Prop.~2.2]{Aren87}.

\section{Interplay between the main concepts}
\label{sec:rel_res}
\subsection{Linear relations and pseudo-resolvents}
We recall some basic notions for \textit{linear relations} between Banach spaces $X$ and $Y$ which are subspaces of the Banach space $X\times Y$. For an overview on this topic, see \cite{Cross}. 
Linear relations can be viewed as multi-valued linear operators and in this sense one can introduce the well known notions of \textit{domain, kernel, range} and \textit{multi-valued part}
\begin{align*}
    \dom L&:=\{x\in X \,|\, (x,y)\in L\}, &
    \ker L&:=\{x\in X \,|\, (x,0)\in L\},\\
    \ran L&:=\{y\in Y \,|\, (x,y)\in L\}, &
    \mul L&:=\{y\in Y \,|\, (0,y)\in L\}.
\end{align*}
Linear operators can be identified with linear relations via their graphs. 

Moreover, one can define the \textit{inverse} of a linear relation
\[
L^{-1}:=\{(y,x)\in Y\times X \,|\, (x,y)\in L\}.
\]
In the following, we assume that $L$ is in $X\times X$ and consider for $\lambda\in\C$
\[
L-\lambda:=\{(x,y-\lambda x)\,|\, (x,y)\in L\}.
\]
We say that $\lambda\in\C$ is in the resolvent set $\rho(L)$ of a linear relation $L$, if $\ker(L-\lambda)=\{0\}$ and $\ran(L-\lambda)=X$. If $L$ is closed, then, by \cite[Thm.~1.6]{FaviYagi99}, $\rho(L)$ is open.
Moreover, the closedness of $L$ together with the closed graph theorem \cite[Sec.~7.9]{Alt16} implies that $(L-\lambda)^{-1}$ is the graph of a~bounded linear operator, which is, for convenience, also denoted by $(L-\lambda)^{-1}\in L(X)$. It is shown e.g.\ in \cite[Thm.~1.8]{FaviYagi99} that the function $\lambda\mapsto(L-\lambda)^{-1}\in L(X)$ fulfills the resolvent identity, i.e.,
\begin{align}
\label{LRresid}
(L-\lambda)^{-1}-(L-\mu)^{-1}=(\mu-\lambda)(L-\lambda)^{-1}(L-\mu)^{-1},\quad \lambda,\mu\in\rho(L),
\end{align}
whence $\lambda\mapsto(L-\lambda)^{-1}\in L(X)$ on $\Omega=\rho(L)$ is a~pseudo-resolvent.\\
Note that \eqref{LRresid} further implies that the subspaces $\ker (L-\lambda)^{-1}$ and $\ran (L-\lambda)^{-1}$ are independent of the particular choice of $\lambda\in\rho(L)$.

We have seen that a~closed linear relation defines a~pseudo-resolvent. In the following, we show that also the converse is true in some sense. That is, by starting with a~pseudo-resolvent $R$, we can construct a~linear relation $L$ with the property $R(\lambda)=(L-\lambda)^{-1}$, see also~\cite{BasaCher02}, \cite[Prop.~A.2.4]{Haas06}, \cite[Lem.~9.2]{Pazy83}.
\begin{proposition}
\label{prop:relationfrompsres}
Let $X$ be a~Banach space and $R:\Omega\rightarrow L(X)$ a pseudo-resolvent, then for $\lambda\in\Omega$ the linear relation
\[
L_\lambda=\ran \begin{pmatrix}R(\lambda)\\ I+\lambda R(\lambda)\end{pmatrix}
\]
is closed and independent of the particular choice of $\lambda$, that is, $L_\lambda=L_\mu$ for all $\lambda,\mu\in\Omega$ and
\begin{equation*}
\begin{aligned}
\ker R(\lambda)=&\,\mul L_{\lambda}=\mul L_{\mu}=\ker R(\mu),\\
\ran R(\lambda)=&\,\dom L_{\lambda}=\dom L_{\mu}=\ran R(\mu). \end{aligned}\end{equation*}
Moreover, it holds $(L_{\lambda}-\lambda)^{-1}=R(\lambda)$ for all $\lambda\in\Omega$.
\end{proposition}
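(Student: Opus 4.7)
The plan is to unpack the definition of $L_\lambda$ into a single pointwise condition, and then address the four assertions (closedness, independence of $\lambda$, identification of $\dom$ and $\mul$, and the resolvent formula) in a natural order. Writing the generic element of $L_\lambda$ as $(R(\lambda)x,(I+\lambda R(\lambda))x)$ and using the second coordinate to eliminate the parameter $x$, one sees that if we put $u:=R(\lambda)x$ and $v:=x+\lambda R(\lambda)x$, then $x=v-\lambda u$, so that
\[
(u,v)\in L_\lambda \quad\Longleftrightarrow\quad u=R(\lambda)(v-\lambda u).
\]
Closedness of $L_\lambda$ follows at once from boundedness of $R(\lambda)$: if $(u_n,v_n)\in L_\lambda$ converges to $(u,v)$, pass to the limit in $u_n=R(\lambda)(v_n-\lambda u_n)$.

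The heart of the proof is the independence $L_\lambda=L_\mu$ for $\lambda,\mu\in\Omega$. I would fix $(u,v)\in L_\lambda$ and set $w_\lambda:=v-\lambda u$, $w_\mu:=v-\mu u$, so that $R(\lambda)w_\lambda=u$ and the two auxiliary vectors are related by $w_\mu=w_\lambda+(\lambda-\mu)u=w_\lambda+(\lambda-\mu)R(\lambda)w_\lambda$. The goal is to show $R(\mu)w_\mu=u$, as that is exactly the condition $(u,v)\in L_\mu$. Applying $R(\mu)$ to the expression for $w_\mu$ and using the resolvent identity in the form $(\lambda-\mu)R(\mu)R(\lambda)=R(\lambda)-R(\mu)$ (which also exploits the commutativity $R(\lambda)R(\mu)=R(\mu)R(\lambda)$ inherent to pseudo-resolvents), the two occurrences of $R(\mu)w_\lambda$ cancel and one is left with $R(\mu)w_\mu=R(\lambda)w_\lambda=u$. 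Swapping the roles of $\lambda$ and $\mu$ gives the opposite inclusion.

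With independence established, the remaining assertions are short. From the pointwise characterisation, $u\in\dom L_\lambda$ asks for some $v$ with $u=R(\lambda)(v-\lambda u)$, and choosing $v=w+\lambda u$ for any $w$ with $R(\lambda)w=u$ exhibits such a $v$; this yields $\dom L_\lambda=\ran R(\lambda)$. Likewise $v\in\mul L_\lambda$ reduces to $R(\lambda)v=0$, giving $\mul L_\lambda=\ker R(\lambda)$. The statements at $\mu$ then follow from $L_\lambda=L_\mu$. Finally, the same characterisation gives
\[
L_\lambda-\lambda=\{(R(\lambda)w,w)\st w\in X\},
\]
and inverting this relation identifies it with the graph of $R(\lambda)$, i.e.\ $(L_\lambda-\lambda)^{-1}=R(\lambda)$.

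The main obstacle is the independence step: everything else is bookkeeping around the characterisation $(u,v)\in L_\lambda\Leftrightarrow u=R(\lambda)(v-\lambda u)$. The delicate point there is organising the algebra so that the resolvent identity enters in exactly the form that makes the cross-term $(\lambda-\mu)R(\mu)R(\lambda)w_\lambda$ collapse to $R(\lambda)w_\lambda-R(\mu)w_\lambda$; once this is written out, the cancellation with the remaining $R(\mu)w_\lambda$ term is automatic, and the whole proposition falls out.
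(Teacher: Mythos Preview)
Your argument is correct and complete. The pointwise characterisation $(u,v)\in L_\lambda\Leftrightarrow u=R(\lambda)(v-\lambda u)$ is the right organising device, and the independence step via the resolvent identity is carried out cleanly; the remaining items (closedness, $\dom$, $\mul$, and the identification $(L_\lambda-\lambda)^{-1}=R(\lambda)$) are indeed routine once this characterisation is in hand.

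As for comparison with the paper: the paper does not actually give its own proof of this proposition. It states the result and refers the reader to the literature (Haase, Pazy, and Baskakov--Chernyshov). Your approach is essentially the standard one found in those references, so there is no substantive difference to discuss. One small remark: the sign in the resolvent identity you invoke, $(\lambda-\mu)R(\mu)R(\lambda)=R(\lambda)-R(\mu)$, differs from the form displayed in the paper's Definition~\ref{def:res}; your version is the one that is consistent with $R(\lambda)=(L-\lambda)^{-1}$ and makes the cancellation work, so this is at most a typographical issue and does not affect the validity of your proof.
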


\subsection{Pseudo-resolvents and degenerate semigroups}\label{sec:psdegen}
We have already seen that in a reflexive Banach space $X$, the Laplace transform of a degenerate semigroup leads to a pseudo-resolvent via \eqref{lapalce_pseudo}. In this part, we treat the reverse statement. That is, to a given a pseudo-resolvent, we will associate a degenerate semigroup.\\
We will often assume that the pseudo-resolvent $R:\Omega\rightarrow L(X)$ fulfills, for $k\geq1$,\\[2mm]
$\mathbf{(D_k)}$:\quad There exist $\omega\in\R$, $M>0$, such that 
$[\omega,\infty)\subseteq\Omega$ and \[\|R(\lambda)x\|\leq \frac{M}{\lambda-\omega}\,\|x\|\quad
\forall\,\lambda\in(\omega,\infty), x\in \ran R(\omega)^{k-1}.
\]
We will show in Theorem~\ref{thm:main} that this assumption leads to a~closed operator $A_R$ defined on  a~dense subspace of $\overline{\ran R(\omega)^k}$,  $\omega\in\Omega$, such that $R(\lambda)|_{\overline{\ran R(\omega)^k}}=(A_R-\lambda I)^{-1}$. Under some additional assumptions such as $M=1$ in $\mathbf{(D_k)}$, it can be shown that $A_R$ is the generator of a strongly continuous semigroup $(T(t))_{t\geq 0}$. A degenerate semigroup $(T_R(t))_{t\geq 0}$ is then given by
\begin{align*}
T_R(t)(x_1+x_2):= T(t)x_1,\quad  \forall~ x_1\in \overline{\ran R(\omega)^k},~x_2\in\ker R(\omega)^k,~t\geq 0.
\end{align*}

 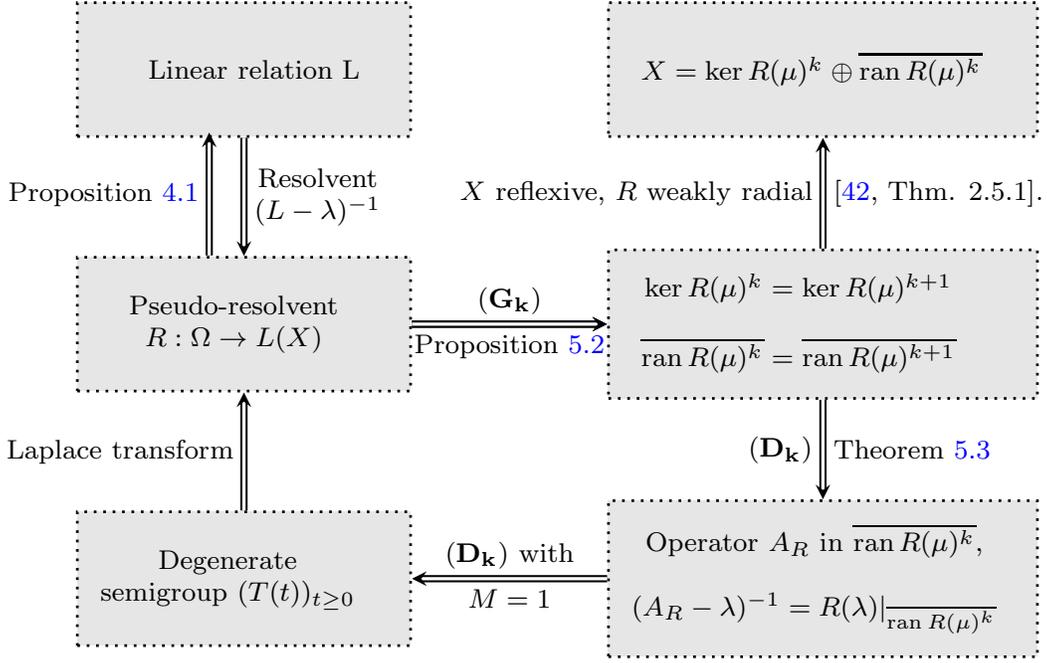
\begin{figure}
 {\footnotesize
 \begin{center}
   \resizebox{0.9\linewidth}{!}{\begin{tikzpicture}[thick,node distance = 22ex,implies/.style={double,double equal sign distance,-implies}, box/.style={fill=white,rectangle, draw=black},
   blackdot/.style={inner sep = 0, minimum size=3pt,shape=circle,fill,draw=black},plus/.style={fill=white,circle,inner sep = 0,very thick,draw},
   metabox/.style={inner sep = 2ex,rectangle,draw,dotted,fill=gray!20!white}]


     \node (LR)     [metabox,minimum size=1.4cm]  {\parbox{3cm}{\hspace{0cm}
     \text{~~~~Linear relation L}}};

     \node (PR)     [below of = LR, metabox,minimum size=1.4cm,yshift=0ex]  {\parbox{3cm}{\hspace{0cm}\text{~~~$\begin{matrix}\text{Pseudo-resolvent} \\  R:\Omega\rightarrow L(X)\end{matrix}$}}};

         \node (DS)     [below of = PR, metabox,minimum size=1.4cm,yshift=0ex]  {\parbox{3cm}{\hspace{0cm}$\begin{matrix} \text{Degenerate}\\ \text{semigroup $(T(t))_{t\geq 0}$}\end{matrix}$}};

\node (AR) [right of = DS, metabox, minimum size=1.4cm,xshift=28ex]  {\parbox{4cm}{\hspace{0cm}$\begin{matrix} \text{Operator $A_R$ in $\overline{\ran R(\mu)^k}$},\\  \\ (A_R-\lambda)^{-1}=R(\lambda)|_{\overline{\ran R(\mu)^k}}\end{matrix}$}};

\node (WD) [right of = LR, metabox, minimum size=1.4cm,xshift=28ex]  {\parbox{4cm}{\hspace{0cm} $X=\ker R(\mu)^k\oplus \overline{\ran R(\mu)^k}$}};

\node (DI) [right of = PR, metabox, minimum size=1.4cm,xshift=28ex]  {\parbox{4cm}{\hspace{0cm} $\begin{matrix} \ker R(\mu)^k=\ker R(\mu)^{k+1}\\ \\ \overline{\ran R(\mu)^k}=\overline{\ran R(\mu)^{k+1}}\end{matrix}$}};

 \node (LR2) [left of = LR, xshift = 19ex, yshift = -4.5ex] {};

 \node (PR2) [left of = PR, xshift = 19ex, yshift = 5ex] {};

 \draw[->,semithick,double,double equal sign distance,>=stealth] (LR)   -- node [right] {$\begin{matrix}\text{Resolvent}\\ (L-\lambda)^{-1}\end{matrix}$}  (PR);

 \draw[->,semithick,double, double equal sign distance,>=stealth] (PR2)   -- node [left] {Proposition~\ref{prop:relationfrompsres}}  (LR2);

 \draw[->,semithick,double, double equal sign distance,>=stealth] (DS)   -- node [left] {Laplace~transform}  (PR);

 \draw[->,semithick,double, double equal sign distance,>=stealth] (PR)   -- node [above] {$\mathbf{(G_k)}$}  (DI);

 \draw[->,semithick,double, double equal sign distance,>=stealth] (PR)   -- node [below] {Proposition~\ref{prop:ker_stat}}  (DI);

 \draw[->,semithick,double, double equal sign distance,>=stealth] (DI)   -- node [right] {Theorem~\ref{thm:main}}  node [left] {$\mathbf{(D_k)}$}  (AR);

  \draw[->,semithick,double, double equal sign distance,>=stealth] (AR)   -- node [below] {$M=1$}  node [above] {$\mathbf{(D_k)}$ with} (DS);


     \draw[->,semithick,double, double equal sign distance,>=stealth] (DI)   -- node [left] {$X$ reflexive, $R$ weakly radial}  node[right] {\cite[Thm. 2.5.1]{SvirFedo03}.} (WD);

 \end{tikzpicture}}
 \end{center}}
    \caption{The interplay between linear relations, pseudo-resolvents and degenerate semigroups in a Banach space $X$.}
    \label{fig:interplay}
  \end{figure}

\section{Decoupling of spaces and linear relations}\label{sec:declinrel}
Given a regular ADAE~\eqref{adae} and hence a pseudo-resolvent $R$, then additional assumptions have to be made to obtain a degenerate semigroup, see Figure~\ref{fig:interplay}. To this end, we establish certain space decompositions based on $R$. This will be used to decompose the associated linear relation and to construct a certain operator $A_R$ which is then utilized to solve the ADAE~\eqref{adae}.

In the following, we present a space decomposition which is inspired by the ideas on Wong sequences \cite{Camp82,BergIlch12a,TrosWaur18,TrosWaur19}, namely, for sufficiently large $k\in\N$,
\begin{align}
\label{finite_dec}
X=\overline{\ran R(\lambda)^k}\oplus \ker R(\lambda)^k.
\end{align}
If $X$ is finite dimensional this decomposition follows from the famous Weierstra\ss\ canonical form \cite{Gant59a}. The smallest $k$ such that \eqref{finite_dec} holds, is the index of the associated DAE, see \cite[p.~37]{Camp82} and \cite[Prop.~2.3]{BergIlch12a}. The subspaces $\mathcal{V}_k:=\ran R(\lambda)^k$ and $\mathcal{W}_k:=\ker R(\lambda)^k$ for $k\geq 0$ are independent from the choice of $\lambda$ and called {\em Wong sequences}, see \cite[Lem.~2.2]{BergIlch12a}. Wong sequences were generalized in \cite{TrosWaur18,TrosWaur19,Tros19} to Banach and Hilbert spaces but using geometric approach from \cite{BergIlch12a}, which imposes strong additional assumptions on the involved operators $E$ and $A$. 

To establish the space decomposition \eqref{finite_dec} in Banach spaces for $k=1$, a growth condition has been imposed in numerous works \cite{Aren01,BasaCher02,FaviYagi99,Kato59,ThalThal99}.\\[2mm]
$\mathbf{(G_1)}$:\quad There exists some $M>0$ and a sequence  $(\lambda_n)_{n\geq 1}$ in $\Omega\cap\R$ such that 
\[\text{$(\lambda_n)
\to\infty\,$  and }\, \|\lambda_n R(\lambda_n)\|\leq M\quad \forall\, n\geq 1.
\]
It has been pointed out by Kato in~\cite{Kato59} that this condition implies
\begin{align*}
\ker R(\lambda)\cap\overline{\ran R(\lambda)}=\{0\}\quad \forall\, \lambda\in\Omega,
\end{align*}
where it has been moreover shown that, under the additional assumption that $X$ \emph{locally sequentially weakly 
compact}, i.e.\ every bounded sequence has a weakly convergent 
subsequence,
\begin{align}
\label{k=1_dec}
X=\overline{\ran R(\lambda)}\oplus \ker R(\lambda)\quad \forall\, \lambda\in\Omega,
\end{align}
Note that, by \cite[Thm.~8.8]{Alt16}, this 
property is fulfilled, if $X$ is reflexive.

In the remainder, we show that a~decomposition \eqref{finite_dec} is possible, if the growth condition $\mathbf{(G_1)}$ is replaced by some weaker assumption.
To this end,we present a~result that, loosely speaking, kernels and ranges of products of pseudo-resolvents do not alter with the change of the resolvent points.
\begin{lemma}{\cite[Thm.~2.1.2]{SvirFedo03}}
Let $X$ be a Banach space and let $R:\Omega\rightarrow L(X)$ be a pseudo-resolvent and $\ell\in\N$,  $\lambda_i,\mu_i\in\Omega$ for $i=1,\ldots,\ell$. Then
\begin{align*}
\ker R(\lambda_1)\cdot\ldots\cdot R(\lambda_\ell)=\ker R(\mu_1)\cdot\ldots\cdot R(\mu_\ell), \\
\ran R(\lambda_1)\cdot\ldots\cdot R(\lambda_\ell)=\ran R(\mu_1)\cdot\ldots\cdot R(\mu_\ell). 
\end{align*}
\end{lemma}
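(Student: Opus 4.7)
The plan is to exploit the resolvent identity \eqref{pseudoresid} to show that each $R(\lambda_i)$ factors through $R(\mu_i)$, and then to use commutativity of pseudo-resolvents to propagate this factorization through the full product.

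First I would observe that pseudo-resolvent values commute. Swapping $\lambda$ and $\mu$ in \eqref{pseudoresid} gives $R(\mu)R(\lambda) = (R(\mu)-R(\lambda))/(\lambda-\mu) = R(\lambda)R(\mu)$, so $R(\lambda)R(\mu) = R(\mu)R(\lambda)$ for all $\lambda,\mu\in\Omega$. Next, from \eqref{pseudoresid} I would rewrite the resolvent identity in the multiplicative form
\[
R(\lambda_i) \,=\, R(\mu_i)\bigl(I + (\mu_i-\lambda_i)R(\lambda_i)\bigr) \,=\, \bigl(I + (\mu_i-\lambda_i)R(\lambda_i)\bigr)R(\mu_i),
\]
where the two expressions coincide by commutativity. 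Setting $S_i := I + (\mu_i-\lambda_i)R(\lambda_i)\in L(X)$, each factor $R(\lambda_i)$ thus peels off a copy of $R(\mu_i)$ on either side.

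Then I would substitute into the full product. Since all the $R(\mu_j)$, $R(\lambda_i)$ and therefore all $S_i$ mutually commute, one may rearrange freely to obtain
\[
R(\lambda_1)\cdots R(\lambda_\ell) \,=\, R(\mu_1)\cdots R(\mu_\ell)\cdot S_1\cdots S_\ell \,=\, S_1\cdots S_\ell \cdot R(\mu_1)\cdots R(\mu_\ell).
\]
The right-most identity immediately yields $\ker R(\mu_1)\cdots R(\mu_\ell)\subseteq \ker R(\lambda_1)\cdots R(\lambda_\ell)$, while the middle one yields $\ran R(\lambda_1)\cdots R(\lambda_\ell)\subseteq \ran R(\mu_1)\cdots R(\mu_\ell)$.

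Finally, the whole argument is symmetric in the roles of $(\lambda_i)$ and $(\mu_i)$: exchanging them gives the reverse inclusions and hence equality in both cases. I do not anticipate any genuine obstacle; the only mild subtlety is to be careful that commutativity of $R(\lambda)$ and $R(\mu)$ (from the resolvent identity, not from an a~priori assumption) is what legitimises interchanging $R(\mu_i)$ and $S_j$ in the product, and this should be remarked on explicitly.
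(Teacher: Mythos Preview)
Your argument is correct. Note, however, that the paper does not supply its own proof of this lemma: it is stated with attribution to \cite[Thm.~2.1.2]{SvirFedo03} and used without further justification. Your approach---factoring $R(\lambda_i)=R(\mu_i)S_i=S_iR(\mu_i)$ via the resolvent identity, exploiting commutativity to pull all the $R(\mu_j)$ to one side, and then invoking symmetry---is precisely the standard proof one finds in the cited source, so there is nothing to compare against within the present paper.
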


Next we introduce a polynomial growth condition for the resolvent, which generalizes $\mathbf{\mathbf{(G_1)}}$. Namely, for $k\geq 1$,\\[2mm]
$\mathbf{(G_k)}$:\quad There exist $M>0$ and a sequence $(\lambda_n)_{n\geq 1}$ in $\Omega\cap\R$ such that \[  \text{$(\lambda_n)
\to\infty\,$ and }\, \|\lambda_n^{2-k} R(\lambda_n)\|\leq M\quad \forall\,n\geq 1.
\]
We first show that this growth condition implies that kernels and ranges of pseudo-resolvents stabilize at the $k$th power. We will make use of the dual pseudo-resolvent $R':\Omega\to L(X')$, which is given by
\begin{align}
\label{dual_def}
\langle R'(\lambda)x',x\rangle_{X',X}:=\langle x',R(\lambda)x\rangle_{X',X}\quad \forall~x\in X,\,x'\in X',\,\lambda\in\Omega.
\end{align}
It can be directly seen that $R'$ is a~pseudo-resolvent as well.

\begin{proposition}
\label{prop:ker_stat}
Let $X$ be a Banach space and let $R:\Omega\rightarrow L(X)$ be a pseudo-resolvent which satisfies $\mathbf{\mathbf{(G_{k_0})}}$ for some $k_0\geq 1$. Then
\begin{align}
\label{stat_ker_ran}
\ker R(\mu)^{k+1}=\ker R(\mu)^{k_0},\quad \text{and}\quad \overline{\ran R(\mu)^{k+1}}=\overline{\ran R(\mu)^{k_0}}
\end{align}
for some (and hence for all) $\mu\in\Omega$ and for all $k\geq k_0$.
\end{proposition}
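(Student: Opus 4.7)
The clause ``for some (and hence for all) $\mu\in\Omega$'' is immediate from the preceding lemma, since $\ker R(\mu)^k$ and $\ran R(\mu)^k$ do not depend on $\mu$. A routine induction then reduces the statement to the two base cases $\ker R(\mu)^{k_0+1}=\ker R(\mu)^{k_0}$ and $\overline{\ran R(\mu)^{k_0+1}}=\overline{\ran R(\mu)^{k_0}}$: once the kernel identity at level $k_0$ is known, any $x\in\ker R(\mu)^{k+2}$ satisfies $R(\mu)x\in\ker R(\mu)^{k+1}=\ker R(\mu)^{k_0}$, hence $x\in\ker R(\mu)^{k_0+1}=\ker R(\mu)^{k_0}$; the range stabilization inducts analogously.

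For the kernel base case, fix $x\in\ker R(\mu)^{k_0+1}$ and set $y:=R(\mu)^{k_0}x$, so that $R(\mu)y=0$. The resolvent identity \eqref{pseudoresid} yields $\ker R(\mu)=\ker R(\lambda)$ for every $\lambda\in\Omega$, hence $R(\lambda_n)y=0$ for all $n$. The plan is to iterate the rewriting $R(\lambda)=R(\mu)+(\mu-\lambda)R(\lambda)R(\mu)$ exactly $k_0$ times to obtain
\begin{equation*}
R(\lambda)=\sum_{j=0}^{k_0-1}(\mu-\lambda)^j R(\mu)^{j+1}+(\mu-\lambda)^{k_0}R(\lambda)R(\mu)^{k_0},
\end{equation*}
to apply both sides to $x$, and to use $R(\lambda_n)R(\mu)^{k_0}x=R(\lambda_n)y=0$ to arrive at the polynomial identity
\begin{equation*}
R(\lambda_n)x=\sum_{j=0}^{k_0-1}(\mu-\lambda_n)^j R(\mu)^{j+1}x.
\end{equation*}
The right-hand side is a polynomial in $(\mu-\lambda_n)$ of degree $k_0-1$ with leading coefficient $y$, while the left-hand side has norm at most $M\lambda_n^{k_0-2}\|x\|$ by $\mathbf{(G_{k_0})}$. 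Isolating the leading term, estimating in norm, and dividing by $|\mu-\lambda_n|^{k_0-1}\sim\lambda_n^{k_0-1}$ gives $\|y\|=O(\lambda_n^{-1})$; letting $n\to\infty$ forces $y=0$ and hence $x\in\ker R(\mu)^{k_0}$.

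For the range base case, the dual pseudo-resolvent $R'$ from \eqref{dual_def} satisfies $\|R'(\lambda)\|=\|R(\lambda)\|$ and so also obeys $\mathbf{(G_{k_0})}$. Applying the above argument to $R'$ gives $\ker R'(\mu)^{k_0+1}=\ker R'(\mu)^{k_0}$, and the identification $\ker R'(\mu)^k=(\overline{\ran R(\mu)^k})^\perp$ together with the bipolar theorem (a closed subspace equals the pre-annihilator of its annihilator) converts this into $\overline{\ran R(\mu)^{k_0+1}}=\overline{\ran R(\mu)^{k_0}}$.

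The step I expect to require the most care is the degree/growth bookkeeping: one must verify that iterating the resolvent identity exactly $k_0$ times produces a polynomial of degree precisely $k_0-1$ in $(\mu-\lambda_n)$ whose leading coefficient is $y$, and then check that the growth exponent $k_0-2$ from $\mathbf{(G_{k_0})}$ beats this by exactly one power, supplying the decisive saving factor $\lambda_n^{-1}$ that kills $y$ in the limit. This sharp matching is what makes $\mathbf{(G_{k_0})}$ precisely the right condition for stabilization at level $k_0$.
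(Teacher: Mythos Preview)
Your proposal is correct and follows essentially the same approach as the paper. The only cosmetic difference is in the kernel step: the paper restricts $R(\mu)$ to $\ker R(\mu)^{k_0+1}$, observes that this restriction is nilpotent, and expands $R(\lambda)$ via the finite Neumann series, whereas you iterate the resolvent identity $k_0$ times and kill the remainder term using $R(\lambda_n)y=0$; both routes produce the same polynomial identity $R(\lambda_n)x=\sum_{j=0}^{k_0-1}(\mu-\lambda_n)^j R(\mu)^{j+1}x$ and the same degree-versus-growth comparison. The range step via the dual pseudo-resolvent and annihilators is identical to the paper's.
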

\begin{proof}
Let $\mu\in\Omega$ and $k_0\geq 1$. First note that
\[R(\mu)\ker R(\mu)^{m}\subseteq \ker R(\mu)^{m-1}\subseteq \ker R(\mu)^{m}\quad \text{ for all $m\geq 2$.}
\]
This implies that $R_m:=R(\mu)|_{\ker R(\mu)^m}$ is nilpotent satisfying $R_m^m=0$ for all $m\geq 2$ and we  set $R_1:=0$. Hence, for all $m\geq 1$,
\begin{align}
\label{eq:nil_inv}
(I-(\mu-\lambda) R_m)^{-1}=\sum_{i=0}^{m-1}(\mu-\lambda)^i R_m^i.
\end{align}
Equation 
\eqref{eq:nil_inv} implies that for all $x\in\ker R(\mu)^{m}$, 
\begin{align}
\nonumber R(\lambda)x&=R_m(I-(\mu-\lambda)R_m)^{-1}x
\\&=R_m\sum_{i=0}^{m-1}(\mu-\lambda)^i R_m^ix\nonumber\\ &=\sum_{i=0}^{m-1}(\mu-\lambda)^iR^{i+1}_mx=\sum_{i=1}^{m-1}(\mu-\lambda)^{i-1}R^{i}_mx.
\label{nil_inv}
\end{align}
Since $\mathbf{\mathbf{(G_{k_0})}}$ holds for some $k_0\geq 1$, there exists some $M\geq 1$ and a~sequence $(\lambda_n)_n$ in $\Omega\cap\R$ with $\lambda_n\to\infty$ such that $\|\lambda_n^{2-k_0}R(\lambda_n)\|\leq M$. Assume that the opposite of \eqref{stat_ker_ran} holds. That is, there exists some $x_0\in \ker R(\mu)^{k_0+1}\setminus\ker R(\mu)^{k_0}$. Then, by $R(\mu)^{k_0}x_0\neq 0$ together with \eqref{nil_inv} for $m=k_0+1$, we are led to
\begin{align}
\|\lambda_n^{2-k_0}R(\lambda_n)x_0\|
&=\left\|\lambda_n^{2-k_0}\sum_{i=0}^{k_0}(\mu-\lambda_n)^{i-1}R^{i}_{k_0+1}x_0\right\|. \label{G_k_estim}
\end{align}
Since $R_{k_0+1}^{k_0}x_0\neq 0$, the left hand side in \eqref{G_k_estim}, is not uniformly bounded in $n$. This contradicts the growth assumption $\mathbf{(G_{k_0})}$. Therefore $\ker R(\mu)^{k_0+1}=\ker R(\mu)^{k_0}$ holds. By repeating the above arguments for all $k>k_0$ we conclude $\ker R(\mu)^{k+1}=\ker R(\mu)^{k_0}$.

To prove the second equation in \eqref{stat_ker_ran},  we consider the dual pseudo-resolvent $R'$ given by \eqref{dual_def}. Since the norm of a bounded operator and its dual coincide \cite[Thm.~4.10]{Rudi91}, $R'$ satisfies $\mathbf{(G_{k_0})}$ as well, and \eqref{stat_ker_ran} yields $\ker R'(\mu)^{k+1}=\ker R'(\mu)^{k_0}$ for all $k\geq k_0$. Define the {\em annihilator} of a~subspace $V\subseteq X'$ by 
\[
V_{\perp}=\setdef{x\in X}{\langle x,x'\rangle=0\;\forall\, x'\in V}.
\]  
Then \cite[Thm.~4.12]{Rudi91} leads to
\[
\overline{\ran R(\mu)^{k_0}}=(\ker R'(\mu)^{k_0})_{\perp}=(\ker R'(\mu)^{k})_{\perp}=\overline{\ran R(\mu)^{k}}.
\] 
\end{proof}

In the following, we impose the assumption ${\mathbf{(D_k)}}$ to obtain a space decoupling and an operator $A_R$ associated to a pseudo-resolvent as shown in Figure~\ref{fig:interplay}.

As one of the main results of this article, we show that $\mathbf{(D_k)}$ implies stationarity of power of ranges and kernels and we construct an operator $A_R$ as in Figure~\ref{fig:interplay}, which can be used later to solve inhomogeneous ADAEs. 
\begin{theorem}
\label{thm:main}
Let $X$ be a Banach space and let $R:\Omega\rightarrow L(X)$ be a pseudo-resolvent satisfying $\mathbf{(D_k)}$ for some $k\geq 1$ and let $\mu\in\Omega$. Then the following holds.
\begin{itemize}
    \item[\rm (a)] $R$ satisfies $\mathbf{(G_k)}$;
    \item[\rm (b)] $\overline{\ran R(\mu)^{k+1}}=\overline{\ran R(\mu)^k}$,
    $\ker R(\mu)^{k+1}=\ker R(\mu)^k$, and
    \begin{align}
    \label{empty_int}
    \overline{\ran R(\mu)^k}\cap \ker R(\mu)^k=\{0\};
    \end{align}
    \item[\rm (c)] The operator $A_{R}$ given by
\begin{align}
\label{ahat}
{\rm gr\,} A_{R}:=\left\{(R(\mu)x,x+\mu R(\mu)x) ~|~ x\in \overline{\ran R(\mu)^{k}}\right\}\subseteq \overline{\ran R(\mu)^k}\times \overline{\ran R(\mu)^k}
\end{align}
does not depend on the choice of $\mu\in\Omega$, is closed and densely defined on $\overline{\ran R(\mu)^k}$ with $\Omega\subseteq\rho(A_R)$ and for all $\lambda\in\Omega$ we have
\begin{align*}
(A_R-\lambda I)^{-1}=R(\lambda)|_{\overline{\ran R(\lambda)^{k}}}.
\end{align*}
\end{itemize}
\end{theorem}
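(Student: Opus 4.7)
The plan is to establish parts (a), (b), (c) in the listed order, since each builds on the previous.

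For (a), I would iterate the resolvent identity \eqref{pseudoresid} with one point fixed at $\omega$ to obtain, for every $\lambda>\omega$,
\[
R(\lambda)=\sum_{j=1}^{k-1}(\omega-\lambda)^{j-1}R(\omega)^{j}+(\omega-\lambda)^{k-1}R(\lambda)R(\omega)^{k-1}.
\]
Since $R(\omega)^{k-1}x\in\ran R(\omega)^{k-1}$ for every $x\in X$, the assumption $\mathbf{(D_k)}$ controls the last term by $M(\lambda-\omega)^{k-2}\|R(\omega)^{k-1}\|\,\|x\|$. Upon multiplication by $\lambda^{2-k}$, each of the $k-1$ polynomial summands and the remainder contribute at most a bounded factor as $\lambda\to\infty$. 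Evaluating along any sequence $\lambda_n\to\infty$ in $(\omega,\infty)$ yields $\mathbf{(G_k)}$.

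Granted (a), the stationarity claims in (b) follow at once from Proposition~\ref{prop:ker_stat} applied with $k_0=k$. The bulk of the work is the empty-intersection property \eqref{empty_int}. Given $x\in\overline{\ran R(\mu)^k}\cap\ker R(\mu)^k$, formula \eqref{nil_inv} with $m=k$ writes $R(\lambda)x$ as a polynomial in $(\omega-\lambda)$ of degree at most $k-2$ with vector coefficients $R(\omega)^ix$, $i=1,\dots,k-1$. On the other hand, $x\in\overline{\ran R(\omega)^k}\subseteq\overline{\ran R(\omega)^{k-1}}$, so extending $\mathbf{(D_k)}$ by continuity gives $\|R(\lambda)x\|\leq M\|x\|/(\lambda-\omega)\to 0$. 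A nonzero polynomial in $\lambda$ cannot decay to $0$, so all coefficients must vanish; in particular $R(\omega)x=0$. To upgrade this to $x=0$, I would restrict $R$ to the closed invariant subspace $\overline{\ran R(\omega)^{k-1}}$, whose invariance follows from $R(\lambda)R(\omega)=R(\omega)R(\lambda)$ (a direct consequence of \eqref{pseudoresid}). The restricted pseudo-resolvent satisfies $\mathbf{(D_1)}$, hence $\mathbf{(G_1)}$, and Kato's theorem quoted after $\mathbf{(G_1)}$ in the paper furnishes $\ker R(\omega)\cap\overline{\ran R(\omega)^k}=\{0\}$, noting that $\overline{\ran R(\omega)^k}$ sits inside the closed range of the restricted $R(\omega)$.

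For (c), I would identify ${\rm gr\,}A_R$ as the intersection of the closed $\mu$-independent linear relation $L$ of Proposition~\ref{prop:relationfrompsres} with the closed $\mu$-independent subspace $\overline{\ran R(\mu)^k}\times\overline{\ran R(\mu)^k}$ (here the $\mu$-independence of the subspace comes from the lemma preceding Proposition~\ref{prop:ker_stat}). Closedness and $\mu$-independence of $A_R$ are thereby immediate. Single-valuedness reduces to $\ker R(\mu)\cap\overline{\ran R(\mu)^k}=\{0\}$, which is \eqref{empty_int}. Density of $\dom A_R=R(\mu)\overline{\ran R(\mu)^k}$ in $\overline{\ran R(\mu)^k}$ follows because $\ran R(\mu)^{k+1}\subseteq\dom A_R$ is dense in $\overline{\ran R(\mu)^{k+1}}=\overline{\ran R(\mu)^k}$ by the stationarity in (b). Finally, since $(L-\lambda)^{-1}=R(\lambda)$ for $\lambda\in\Omega$, and $R(\lambda)$ preserves $\overline{\ran R(\mu)^k}$ by commutativity, restricting yields $(A_R-\lambda)^{-1}=R(\lambda)|_{\overline{\ran R(\mu)^k}}$, so $\Omega\subseteq\rho(A_R)$.

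The main obstacle is the empty-intersection property \eqref{empty_int}. It is the single step where the dissipativity strength of $\mathbf{(D_k)}$ (as opposed to the mere growth $\mathbf{(G_k)}$) is genuinely used, and the reduction to Kato's classical $\mathbf{(G_1)}$ result on an invariant subspace is the bridge that makes the higher-index analysis, and hence the well-definedness of $A_R$, possible.
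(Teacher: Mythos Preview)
Your proposal is correct and, for parts (a) and (c), follows essentially the same line as the paper: the iterated resolvent identity for (a), and Proposition~\ref{prop:relationfrompsres} applied to the restricted pseudo-resolvent for (c) (your treatment of (c) is in fact somewhat more explicit than the paper's about density and the resolvent identity).

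The one place where your route differs is the empty-intersection property~\eqref{empty_int}. The paper argues by induction on $\ell$ that $\ker R(\omega)^\ell\cap\overline{\ran R(\omega)^k}=\{0\}$: for $\ell=1$ it proves directly, via~\eqref{repeated} and $\mathbf{(D_k)}$, that $\lambda_n R(\lambda_n)y\to y$ for every $y\in\overline{\ran R(\omega)^k}$, and then inducts upward using stationarity. You instead go the other direction: first exploit nilpotency to express $R(\lambda)x$ as a vector polynomial in $\lambda$ that is forced to zero by the $\mathbf{(D_k)}$ decay, reducing to $R(\omega)x=0$; then you invoke Kato's $\mathbf{(G_1)}$ result on the invariant subspace $\overline{\ran R(\omega)^{k-1}}$ (where the restricted pseudo-resolvent indeed satisfies $\mathbf{(D_1)}$) to conclude $x=0$. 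Both arguments are sound. The paper's version is self-contained and yields the approximate-identity statement $\lambda_n R(\lambda_n)\to I$ on $\overline{\ran R(\omega)^k}$ as a byproduct, which is of independent interest; your version is more modular, cleanly isolating the reduction to the classical $k=1$ case.
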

\begin{proof}
To prove (a), we first observe that $\mathbf{(D_1)}$ implies $\mathbf{(G_1)}$. Let now $k\geq 2$ and let $\omega\in\R$ with $(\omega,\infty)\subset\Omega$. Then the resolvent identity \eqref{pseudoresid} yields for all $\lambda\in (\omega,\infty)$
\[
R(\lambda)R(\omega)^{k-1}=\frac{R(\omega)-R(\lambda)}{\lambda-\omega}R(\omega)^{k-2}=\tfrac{1}{\lambda-\omega}R(\omega)^{k-1}-\tfrac{1}{\lambda-\omega}R(\lambda)R(\omega)^{k-2},
\]
and a repeated application leads to 
\begin{align}
\label{repeated}
R(\lambda)R(\omega)^{k-1}&=\sum_{i=1}^{k-1}(-1)^{i+1}(\lambda-\omega)^{-i}R(\omega)^{k-i}+(-1)^{k-1}(\lambda-\omega)^{-k+1} R(\lambda).
\end{align}
Now $\mathbf{(D_k)}$ implies
\begin{align*}
&~~~~\|(\lambda-\omega)^{-k+2} R(\lambda)\|\\&\leq \|(\lambda-\omega)R(\lambda)R(\omega)^{k-1}\|+\sum_{i=1}^{k}(-1)^{i+1}(\lambda-\omega)^{-i}\left\|R(\omega)^{k+1-i}\right\|\\&\leq M \|R(\omega)^{k-1}\|+\sum_{i=1}^{k}(-1)^{i+1}(\lambda-\omega)^{-i}\left\|R(\omega)^{k+1-i}\right\|,
\end{align*}
which shows that $\mathbf{(G_k)}$ holds for $k\geq 2$, i.e., (a) is shown.\\
Now we prove (b). The relations $\overline{\ran R(\mu)^{k+1}}=\overline{\ran R(\mu)^k}$,
    $\ker R(\mu)^{k+1}=\ker R(\mu)^k$ follow by a~combination of (a) with Proposition~\ref{prop:ker_stat}. The remaining statement \eqref{empty_int} follows, if 
show that 
\begin{equation}\ker R(\omega)^\ell\cap\overline{\ran R(\omega)^k}=\{0\}\label{eq:RellRk}\end{equation}
by induction on $\ell\in\N$.
To show that \eqref{eq:RellRk} holds for $\ell=1$, we  Let $(\lambda_n)$ be the sequence from $\mathbf{(G_k)}$. We use \eqref{repeated} to conclude that
\begin{align*}
\lim\limits_{n\rightarrow\infty}\|(I-\lambda_nR(\lambda_n))R(\omega)^k\|=0.
\end{align*}
This together with $\mathbf{(D_k)}$ gives  
\begin{align*}
\lim\limits_{n\rightarrow\infty}\lambda_n R(\lambda_n)y=y\quad \forall~y\in\overline{\ran R(\omega)^k}
\end{align*}
and therefore we conclude that \eqref{eq:RellRk} holds for $\ell=1$.
For the induction step, assume that \eqref{eq:RellRk} holds for $\ell\geq1$. Let
$y\in\ker R(\omega)^{\ell+1}\cap\overline{\ran R(\omega)^k}$.
Now, by $R(\omega)y\in\ker R(\omega)^{\ell}$ together with \[R(\omega)y\in R(\omega)\, \overline{\ran R(\omega)^k}\subseteq \overline{\ran R(\omega)^{k+1}}=\overline{\ran R(\omega)^k},\]
we obtain
\[R(\omega)y\in \ker R(\omega)^{\ell+1}\cap\overline{\ran R(\omega)^k}=\{0\}.\]
Consequently, $y\in\ker R(\omega)\cap\overline{\ran R(\omega)^k}=\{0\}$, and the induction statement is shown. 

We continue with the proof of (c). Since the restricted pseudo-resolvent $R|_{\overline{\ran R(\mu)^k}}:\Omega\rightarrow L(\overline{\ran R(\mu)^k})$ fulfills the pseudo-resolvent identity, it is again a pseudo-resolvent and hence by Proposition~\ref{prop:relationfrompsres} we have that the subspace on the right hand side in \eqref{ahat} is closed and does not depend on the choice of $\mu$. Furhtermore, (b) implies that $R(\mu)x=0$ holds only for $x=0$ and therefore the set on the right hand side in \eqref{ahat} defines a closed operator which is denoted by $A_R$.  \end{proof}
The proof of \eqref{empty_int} is based on the general ideas in \cite{Kato59}, where the case $k=1$ has been treated. For this case, a similar construction to obtain an operator $A_R$ was given in \cite[Sec.~5]{ThalThal99}. The operator $A_R$ will be used to construct a continuous semigroup on $\overline{\ran R(\lambda)^{k}}$ which leads to a degenerate semigroup on $X$. The existence of such a degenerate semigroup for $k=1$ was shown previously in~\cite[Thm.~2.3]{FaviYagi99}. Furthermore, the generation of semigroups on the space of consistent initial values was recently investigated in \cite{Tros19}.

In Theorem \ref{thm:main} we do not obtain the desired decoupling \eqref{finite_dec} of the underlying Banach space $X$ for $k>1$. For infinite dimensional spaces such a decomposition was shown in \cite{SvirFedo03} and \cite{TrosWaur19} under additional assumptions.

In \cite{SvirFedo03} 
it was assumed that $X$ is reflexive and that the pseudo-resolvents $R_r$ and $R_l$ given by \eqref{def:RrRl} on $\Omega$ are \emph{weakly $(E,p)$-radial}, i.e.\ we have $(\omega,\infty)\subseteq\Omega$ for some $\omega\in\R$ and for some $p\geq 0$ and $K>0$ the following holds  for all $\mu_0,\ldots,\mu_p\in(\omega,\infty)$
\begin{align}
\label{weakly_Ep_radial}
\max\left\{\left\|\prod_{j=0}^pR_r(\mu_j)\right\|,\left\|\prod_{j=0}^pR_l(\mu_j)\right\|\right\}\leq K\prod_{j=0}^p(\mu_j-\omega)^{-1}.
\end{align}
This is a~stronger assumption than $\mathbf{(D_{p+1})}$, where we consider only $\mu_0=\lambda$ and $\mu_1=\ldots=\mu_p=\omega$ are fixed. 

In \cite{TrosWaur18,TrosWaur19} it was assumed that $X$ is a Hilbert space and that $E$ and $A$ are bounded and fulfill certain closedness assumptions. If $0\in\rho(A)$ then it was required in \cite{TrosWaur18,TrosWaur19}  that $\ran (A^{-1}E)^k$ is closed for some $k\geq 1$ to obtain  a decomposition similar to \eqref{finite_dec}. If want to apply the results in \cite{TrosWaur18,TrosWaur19} to operator pencils $sI-A$ with unbounded $A$, then we can rewrite the pencil as  $sA^{-1}-I$ which has bounded coefficients $\hat E=A^{-1}$ and $\hat A=I$. However assuming e.g.\ the closedness of $\ran (\hat A^{-1}\hat E)=\ran A^{-1}=\dom A$ implies that $A$ is bounded.

In the following we consider the special case when $X$ is a Hilbert space and derive a staircase form of a given  pseudo-resolvent and hence of the associated linear relation given by Proposition~\ref{prop:relationfrompsres}. Note that this approach works without the additional assumptions $\mathbf{(G_k)}$ and $\mathbf{(D_k)}$.  

For $k\geq 1$ and a pseudo-resolvent $R:\Omega\rightarrow L(X)$ where $X$ is now a Hilbert space and some $\mu\in\Omega$ we consider the orthogonal decomposition
\begin{align}
\label{ortho_sum}
X=\mathcal{V}_1\oplus\mathcal{W}_1:=\overline{\ran R(\mu)}\oplus\ker (R(\mu))^*.
\end{align}
Since $R(\mu)(\overline{\ran R(\mu)})\subseteq \overline{\ran R(\mu)}$, the pseudo resolvent can - by using the canonical identification $\mathcal{V}_1\oplus\mathcal{W}_1\cong \mathcal{V}_1\times\mathcal{W}_1$ - be decomposed with respect to \eqref{ortho_sum} as 
\begin{align*}
R(\lambda)=\begin{bmatrix}
R(\lambda)|_{\overline{\ran R(\mu)}}&R(\lambda)|_{\ker R(\mu)^*}\\
0& P_{\ker R(\mu)^*}R(\lambda)|_{\ker R(\mu)^*}\end{bmatrix}=\begin{bmatrix}
R(\lambda)|_{\mathcal{V}_1}&R(\lambda)|_{\mathcal{W}_1}\\
0& 0
\end{bmatrix}
\end{align*}
where we used \eqref{ortho_sum} in the last step.

We repeat the above construction with $R(\lambda)|_{\mathcal{V}_1}$ and consider the orthogonal projection in the Hilbert space  $\mathcal{V}_1=\overline{\ran R(\mu)}$ onto $\mathcal{V}_2:=\overline{\ran R(\mu)^2}$. Then we can decompose 
\begin{align*}
\mathcal{V}_1=\overline{\ran R(\mu)}&=P_{\mathcal{V}_2}\overline{\ran R(\mu)}\oplus(I-P_{\mathcal{V}_2})\overline{\ran R(\mu)}=:\mathcal{V}_2\oplus\mathcal{W}_2,
\end{align*}
which leads to 
\begin{align*}
R(\lambda)|_{\mathcal{V}_1}=\begin{bmatrix}
R(\lambda)|_{\mathcal{V}_2}&R(\lambda)|_{\mathcal{W}_2}\\
0& P_{\mathcal{W}_2}R(\lambda)|_{\mathcal{W}_2}\end{bmatrix}=\begin{bmatrix}
R(\lambda)|_{\mathcal{V}_2}&R(\lambda)|_{\mathcal{W}_2}\\
0& 0
\end{bmatrix}.
\end{align*}
If $\overline{\ran R(\mu)^2}\neq\overline{\ran R(\mu)^3}$ then this process can be continued for $k\geq 3$. The result is an upper block triangular operator matrix decomposition of $R(\lambda)$ in the space
\[
X=\mathcal{V}_k\oplus\mathcal{W}_k\oplus\ldots\oplus\mathcal{W}_{1},\quad \mathcal{V}_k:=\overline{\ran R(\mu)^k}
\]
which is - by canonically identifying $\mathcal{V}_k\oplus\mathcal{W}_k\oplus\ldots\oplus\mathcal{W}_{1}\cong \mathcal{V}_k\times\mathcal{W}_k\times\ldots\times\mathcal{W}_{1}$ of the form
\begin{align}
\label{staircase}
R(\lambda)=\begin{bmatrix}
  R(\lambda)|_{\mathcal{V}_k} & R(\lambda)|_{\mathcal{W}_k} & P_{\mathcal{V}_k}R(\lambda)|_{\mathcal{W}_{k-1}} &\ldots &P_{\mathcal{V}_k}R(\lambda)|_{\mathcal{W}_1} \\ 0&0& P_{\mathcal{W}_k}R(\lambda)|_{\mathcal{W}_{k-1}} &\ldots& P_{\mathcal{W}_k}R(\lambda)|_{\mathcal{W}_1}\\ &\ddots~~~~~&&\ddots&\\ & &~~~~~\ddots&& P_{\mathcal{W}_2}R(\lambda)|_{\mathcal{W}_1}\\& & & 0~~~~&0
\end{bmatrix}
\end{align}

If $\mathbf{(D_k)}$ or $\mathbf{(G_k)}$ hold then according to Theorem~\ref{thm:main} we have that  $R(\lambda)|_{\mathcal{V}_k}$ is injective and hence it coincides with the resolvent of the operator $A_R$ constructed in Theorem~\ref{thm:main}~(c).

Without further assumptions on the operators this might not lead to an injective pseudo-resolvent as the following example shows. 
\begin{example}
Consider the left shift operator $E:\ell^2(\N,\C)\rightarrow\ell^2(\N,\C)$ which is given by  $(x_1,x_2,\ldots,)\mapsto E(x_1,x_2,\ldots):=(x_2,x_3,\ldots)$ and $A=I_{\ell^2}$. This leads to the pseudo-resolvent $R(\lambda):=E(I-\lambda E)^{-1}$ which exists in a neighborhood of $\lambda=0$. Furthermore, $\ran R(0)^k=\ell^2(\N,\C)$ holds for all $k\geq0$ which implies that $R(0)|_{\ran R(0)^k}=E$ is not injective for all $k\geq 0$. 
\end{example}

For Hilbert spaces we present another sufficient condition for the existence of an operator $A_R$ as in Theorem \ref{thm:main}~(c) which is easier to verify than $\mathbf{(G_k)}$ and $\mathbf{(D_k)}$. In the following we assume that $0\in\rho(A)$ holds. If
    \begin{align}
\label{y_impli}
    y\in\ker E\cap\dom A\quad  \land \quad Ay\in\ran E \Longrightarrow y=0,
    \end{align}
then
\begin{align}
\label{dierel}
{\rm gr \,} A_R=\begin{smallbmatrix}EA^{-1}\\I_Z\end{smallbmatrix}(\overline{\ran E}) \subseteq \overline{\ran E}\times \overline{\ran E}
\end{align}
is the graph of a closed operator $A_R$. This coincides with \eqref{ahat} for $\mu=0$. To show \eqref{y_impli} directly, let $EA^{-1}z=0$ for some $z\in \overline{\ran E}$, then $A^{-1}z\in\dom A\cap\ker E$ and $AA^{-1}z=z\in\overline{\ran E}$. Hence by assumption $z=0$ which implies that~\eqref{dierel} is not multi-valued. Since $\overline{\ran E}$ is closed and $EA^{-1}$ is bounded, \eqref{dierel} is also closed and hence the operator is closed.

\section{Index concepts for abstract DAEs}
\label{sec:indexsol}
In this section, we discuss different index concepts for infinite dimensional differential-algebraic equations
\begin{align}
\label{adae_subsec}
\tfrac{\rm d}{{\rm d}t}Ex(t)=Ax(t),\quad  t\geq 0,
\end{align}
where $E:X\rightarrow Z$ is bounded, $X$, $Z$ are Banach spaces, and $A:X\supset \dom A\rightarrow Z$ is densely defined and closed. In the following, we will associate with~\eqref{adae_subsec} the pseudo-resolvent given by the resolvent of the linear relation
\[
\begin{smallbmatrix}
E\\A
\end{smallbmatrix}\,\cdot\,\dom A=AE^{-1}\subseteq Z\times Z,\quad E^{-1}:=\{(x,Ex)~|~x\in X\}^{-1}\subseteq Z\times X,
\]
where $E$ is identified with its graph and the inverse is taken in the sense of linear relations. Hence a pseudo-resolvent is given by
\begin{align}
\label{rangepres}
\Omega:=\rho(AE^{-1}),\quad R_{l}(\lambda)=(AE^{-1}-\lambda)^{-1}=E(A-\lambda E)^{-1}.
\end{align}
Based on this, the ADAE~\eqref{adae_subsec} is said to have \emph{pseudo-resolvent index} $k\in\N$, if  $(\omega,\infty)\subseteq\rho(E,A)$ for some $\omega\in\R$ and $k$ is the smallest natural number such that $R_{l}$ fulfills $\mathbf{(G_k)}$.

An alternative definition can be obtained using the linear relation $E^{-1}A$ which has the resolvent
\[
R_{r}(\lambda)=(E^{-1}A-\lambda)^{-1}=(A-\lambda E)^{-1}E,\quad \lambda\in\rho(E,A).
\]
We first show that non-emptiness of the resolvent together with boundedness of $E$ and completeness of $X$ and $Z$ implies that $A$ is closed. We further show that the graph norm
\begin{equation}\|x\|_{\dom A_0}:=(\|x\|_X^2+\|A_0x\|_X^2)^{1/2},\label{eq:graphnorm} \end{equation}
is equivalent to two further norms associated to $E$ and $A$.

\begin{proposition}\label{prop:normeq}
Let $X$, $Z$ be Banach spaces, let $E:X\to Z$, $A:X\supset\dom A\to Z$ be linear, and assume that $\rho(E,A)\neq\emptyset$ and $(\lambda E-A)^{-1}\in L(Z,X)$ for some $\lambda \in\rho(E,A)$. Then $A$ is closed, and $(\lambda E-A)^{-1}\in L(Z,X)$ for all $\lambda \in\rho(E,A)$. Further, for all $\lambda\in\rho(E,A)$ there exist $c_0,c_1,c_2>0$, such that
\begin{multline}
    \forall\,x\in\dom A:\quad \|x\|_{\dom A}\leq c_0\|(\lambda E-A)x\|_{Z}\\\leq c_1\left(\|Ex\|_Z+\|Ax\|_Z\right)\leq c_2  \|x\|_{\dom A}.\label{eq:eqnorms}
\end{multline}
\end{proposition}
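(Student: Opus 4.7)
The plan is to establish the three claims of the proposition in sequence: closedness of $A$, then boundedness of $(\lambda E - A)^{-1}$ for all $\lambda \in \rho(E,A)$, and finally the three-term norm chain.

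To show $A$ is closed, I would fix the distinguished $\lambda_0 \in \rho(E,A)$ supplied by the hypothesis. Since $(\lambda_0 E - A)^{-1} \in L(Z,X)$, it has closed graph, and swapping coordinates shows that $\lambda_0 E - A : \dom A \to Z$ is closed. Because $E : X \to Z$ is bounded (the standing assumption of the section), $\lambda_0 E$ is bounded and everywhere defined, so $A = \lambda_0 E - (\lambda_0 E - A)$ is a bounded perturbation of a closed operator on the same domain, hence closed. Once $A$ is closed, for any $\lambda \in \rho(E,A)$ the operator $\lambda E - A$ is again closed and, by definition of the resolvent set, a bijection $\dom A \to Z$. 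Its inverse is thus defined on all of $Z$ and has closed graph, so the closed graph theorem gives $(\lambda E - A)^{-1} \in L(Z,X)$.

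For the norm chain, fix $\lambda \in \rho(E,A)$ and set $C := \|(\lambda E - A)^{-1}\|_{L(Z,X)}$. From $x = (\lambda E - A)^{-1}(\lambda E - A)x$ I obtain $\|x\|_X \leq C\,\|(\lambda E - A)x\|_Z$, and rewriting $Ax = \lambda E x - (\lambda E - A)x$ together with the boundedness of $E$ yields $\|Ax\|_Z \leq (|\lambda|\,\|E\|\,C + 1)\,\|(\lambda E - A)x\|_Z$. Combining these two bounds and using that the graph norm $\|x\|_{\dom A}$ is equivalent to $\|x\|_X + \|Ax\|_Z$ gives the first inequality with a suitable $c_0$. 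The middle inequality follows from the triangle estimate
\[
\|(\lambda E - A)x\|_Z \leq |\lambda|\,\|Ex\|_Z + \|Ax\|_Z \leq \max(|\lambda|,1)\bigl(\|Ex\|_Z + \|Ax\|_Z\bigr),
\]
so one may take $c_1 := c_0 \max(|\lambda|,1)$. For the final inequality, boundedness of $E$ gives $\|Ex\|_Z \leq \|E\|\,\|x\|_X \leq \|E\|\,\|x\|_{\dom A}$ and $\|Ax\|_Z \leq \|x\|_{\dom A}$, so the choice $c_2 := c_1(\|E\| + 1)$ works.

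The principal obstacle, such as it is, lies in recognizing that closedness of $A$ must be harvested from the closedness of $(\lambda_0 E - A)^{-1}$ via boundedness of $E$; after this observation the bounded inverse theorem handles the remaining resolvents, and the norm equivalence is an exercise in triangle inequalities with the a priori bound $C$.
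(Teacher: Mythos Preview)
Your proposal is correct and follows essentially the same route as the paper: closedness of $A$ via closedness of $\lambda_0 E - A$ and boundedness of $E$, then the closed graph theorem for the remaining resolvents, then the norm chain by triangle inequalities. The only notable difference is that for the bound $\|x\|_{\dom A}\leq c_0\|(\lambda E-A)x\|_Z$ the paper argues by contradiction with a normalised sequence, whereas you give the direct estimate via $C=\|(\lambda E-A)^{-1}\|$; your version is slightly cleaner and yields explicit constants.
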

\begin{proof}
{\em Step 1:} We show that $A$ is closed. Boundedness of $(\lambda E-A)^{-1}$ yields closedness of $\lambda E-A$, and boundedness of $E$ now leads to closedness of $A$.\\
{\em Step 2:} We show that $(\lambda E-A)^{-1}\in L(Z,X)$ for all $\lambda \in\rho(E,A)$. This follows by a~combination of the already proven closedness of $A$ with the closed graph theorem \cite[Thm.~7.9]{Alt16}.\\
{\em Step 3:} We show that there exists some $c_0>0$, such that, for all $x\in\dom A$,
\[\|x\|_{\dom A}\leq c_0\|(\lambda E-A)x\|_{Z}.\]
Assuming the converse, we obtain that there exists some sequence $(x_n)$ in $\dom A$ with
$\|(\lambda E-A)x_n\|_Z\to 0$ and $\|x_n\|_X+\|Ax_n\|_X\to 1$. Boundedness of $(\lambda E-A)^{-1}$ yields that
\[\|x_n\|_X=\|(\lambda E-A)^{-1}(\lambda E-A)x_n\|_X\to 0,\]
whence $\|Ax_n\|_Z\to 1$, and, by boundedness of $E$, $\|(\lambda E-A)x_n\|_Z\to 1$, which is a~contradiction.\\
{\em Step 4:} We show that there exists some $\tilde{c}_1>0$, such that
\[\|(\lambda E-A)x\|_{Z}\leq \tilde{c}_1\left(\|Ex\|_Z+\|Ax\|_{Z}\right).\]
This is a~simple consequence of the triangle inequality and can be achieved by the choice
\[\tilde{c}_1=\max\left\{1,\tfrac1\lambda\right\}.\]
{\em Step 5:} We show that there exists some $\tilde{c}_2>0$, such that, for all $x\in\dom A$,
\[\|Ex\|_Z+\|Ax\|_{Z}\leq \tilde{c}_2 \|x\|_{\dom A}.\]
This holds, since we have
\[\|Ex\|_Z+\|Ax\|_{Z}\leq \|E\|\,\|x\|_X+\|Ax\|_Z\leq  \tilde{c}_2 \|x\|_{\dom A}\]
for
\[\tilde{c}_2:=\sqrt2\cdot\max\{1,\|E\|\}.\]
{\em Step 6:} We conclude that \eqref{eq:eqnorms} holds:  The first inequality in \eqref{eq:eqnorms} has been proven in step~3, whereas the remaining two ones follow from the statements shown in steps 3 and 4 by setting
\[c_1=\tilde{c}_1 \,c_0,\quad c_2=\tilde{c}_2 \,c_2.\]
\end{proof}

For more details on the spectrum of regular operator pencils and the associated linear relations $E^{-1}A$ and  $AE^{-1}$ we refer to \cite{GernMoal20}. 
In the remainder of this section, we compare the pseudo-resolvent index~\eqref{rangepres} with other index assumptions in the literature. An overview is given in Figure~\ref{fig:index}.

 \begin{figure}
 {\footnotesize
 \begin{center}
   \resizebox{0.6\linewidth}{!}{\begin{tikzpicture}[thick,node distance = 22ex,implies/.style={double,double equal sign distance,-implies}, box/.style={fill=white,rectangle, draw=black},
   blackdot/.style={inner sep = 0, minimum size=3pt,shape=circle,fill,draw=black},plus/.style={fill=white,circle,inner sep = 0,very thick,draw},
   metabox/.style={inner sep = 2ex,rectangle,draw,dotted,fill=gray!20!white}]

     \node (diss)  [metabox,minimum size=1.2cm]  {\parbox{6cm}{\hspace{0cm} $\begin{matrix} \text{$R_{l}(\lambda)=E(A-\lambda E)^{-1}$ fulfills $\mathbf{(D_k)}$ }\\ \text{and  $(\omega,\infty)\subseteq\rho(E,A)$} \end{matrix}$
}};

     \node (psri)     [below of = diss,metabox,minimum size=1.2cm,yshift=5ex]  {\parbox{6cm}{\hspace{0cm}
     \text{~~ADAE has finite pseudo-resolvent index}}};

     \node (ri)     [below of = psri, metabox,minimum size=1.2cm,yshift=5ex]  {\parbox{6cm}{\hspace{0cm}\text{~~~~~ADAE has finite resolvent index}}};

         \node (ti)     [below of = ri, metabox,minimum size=1.2cm,yshift=5ex]  {\parbox{6cm}{\hspace{0.3cm}$\begin{matrix}\text{ADAE has finite tractability index and}\\ \exists M,\omega >0\,\forall\lambda>\omega: ~\|(\lambda I-\mathfrak{U})^{-1}\|\leq M \end{matrix}$}};

 \node (psri2) [left of = psri, xshift = 10ex, yshift = -4ex] {};
 \node (ri2) [left of = ri, xshift = 10ex, yshift = 4ex] {};

 \node (psri3) [right of = psri, xshift = -10ex, yshift = -4ex] {};
 \node (ri3) [right of = ri, xshift = -10ex, yshift = 4ex] {};

 \node (ri4) [left of = ri, xshift = 10ex, yshift = -4ex] {};
 \node (ti2) [left of = ti, xshift = 10ex, yshift = 4ex] {};

 \node (ri5) [right of = ri, xshift = -10ex, yshift = -4ex] {};
 \node (ti3) [right of = ti, xshift = -10ex, yshift = 4ex] {};


 \draw[->,semithick,double,double equal sign distance,>=stealth] (diss)   -- node [right] {}  (psri);


    \draw[<-,semithick,double,double equal sign distance,>=stealth] (psri3)  -- node [right] {Proposition~\ref{prop:res_index}~~}  (ri3);


     \draw[->,semithick,double,double equal sign distance,>=stealth] (ti)  -- node [right] {Proposition~\ref{prop:tithenri}~~}  (ri);


 \node (ri6) [left of = ri, xshift = 14ex, yshift = -4ex] {};
 \node (ti4) [left of = ti, xshift = 14ex, yshift = 4ex] {};



   \draw[<-,semithick,double,double equal sign distance,>=stealth] (ri2)  -- node [left] {}  (psri2);

 \end{tikzpicture}}
 \end{center}}
   \caption{An overview of the different index concepts.}
   \label{fig:index}
  \end{figure}
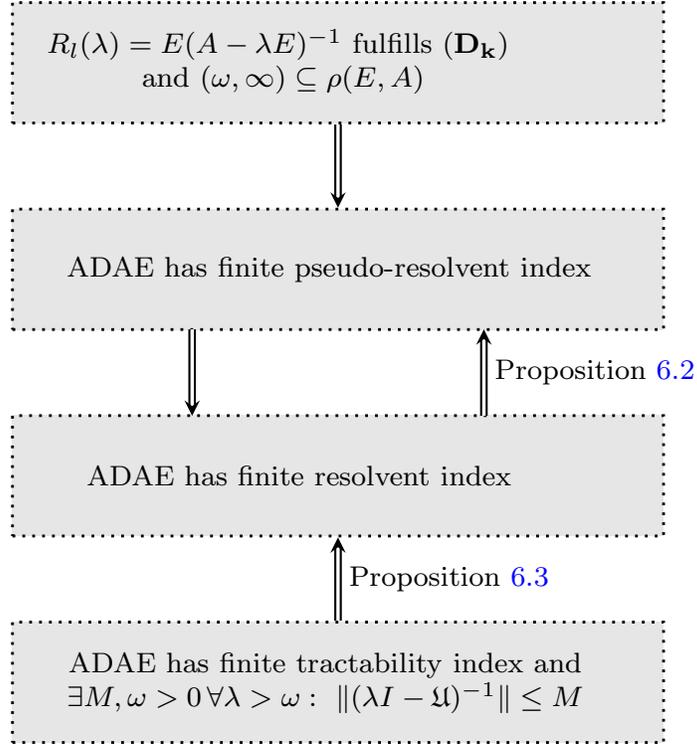

\subsection{Comparison with resolvent index}
In~\cite{TrosWaur18,TrosWaur19} it was assumed that the DAE satisfies for some $k\in\N$
\begin{align}
\label{TWass}
\Cpo\subseteq \rho(E,A), \quad \|(\lambda E-A)^{-1}\|\leq M|\lambda|^{k-1}\,\,\text{for all $\lambda\in\Cpo$}.
\end{align}
The smallest such $k$ will be called the \emph{resolvent index} of the ADAE. Note that in \cite{TrosWaur18} the condition \eqref{TWass} was stated with $k$ instead of $k-1$. However with this definition the index would be different from the commonly used index for linear DAEs on finite dimensional spaces \cite{KunkMehr06,LamoMarz13}.

Below, we consider the weaker conditions
\begin{align*}
&\mathbf{(R_k)} \quad \exists M\geq 1, ~\exists \omega\in\R:\quad \|\lambda^{-k+1}(\lambda E-A)^{-1}\|\leq M,\quad \forall\lambda\in(\omega,\infty)\\
&\mathbf{(R_k^w)} \quad \exists M\geq 1\text{ and a~sequence } (\lambda_n)_n\text{ in $\Omega\cap\R$ with } \lambda_n\to \infty:\\&\qquad\qquad  \|\lambda_n^{-k+1}(\lambda_n E-A)^{-1}\|\leq M.
\end{align*}
Clearly, $\mathbf{(R_k)}$ for some $k\geq 0$ implies $\mathbf{(R_k^w)}$. From Theorem~\ref{thm:main} we know that $\mathbf{(D_k)}$ implies $\mathbf{(G_k)}$. In the proposition below, we compare the growth conditions  $\mathbf{(R_k)}$, $\mathbf{(R_k^w)}$, and $\mathbf{(G_k)}$. 
\begin{proposition}
\label{prop:res_index}
Let $X,Z$ be a Banach spaces and $E:X\rightarrow Z$ bounded, $A:X\supseteq\dom A\rightarrow Z$ closed and densely defined and assume that $(\omega,\infty)\subseteq\rho(E,A)$, $k\geq 1$. Then the following holds.
\begin{itemize}
\item[\rm (a)] If $R_{l}$ fulfills $\mathbf{(G_k)}$ then $\mathbf{(R_k^w)}$ holds.
\item[\rm (b)] If $\mathbf{(R_k^w)}$ holds then $R_{l}$ fulfills $\mathbf{(G_{k+1})}$. Further, $R_{l}$ does not fulfill $\mathbf{(G_{k-1})}$.
\item[\rm (c)] If $A$ is bounded and $\mathbf{(R_k^w)}$ holds then $R_{l}$ fulfills $\mathbf{(G_k)}$.

\item[\rm (d)] If $A$ is bounded and $\mathbf{(R_k)}$ holds then $R_{l}$ fulfill $\mathbf{(D_k)}$. 
\end{itemize}
\end{proposition}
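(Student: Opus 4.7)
The plan is to attack the four items in order, using throughout the algebraic identity $\lambda E(\lambda E - A)^{-1} = I + A(\lambda E - A)^{-1}$ (obtained from $\lambda E = (\lambda E - A) + A$) together with the norm equivalence of Proposition~\ref{prop:normeq}, which, at any fixed reference point in $\rho(E,A)$, bounds $\|(\lambda_n E - A)^{-1}z\|_X$ by a constant multiple of $\|E(\lambda_n E - A)^{-1}z\|_Z + \|A(\lambda_n E - A)^{-1}z\|_Z$. For (a), $\mathbf{(G_k)}$ gives $\|E(\lambda_n E - A)^{-1}\| \leq M\lambda_n^{k-2}$ along the sequence; the identity then yields $\|A(\lambda_n E - A)^{-1}\| \leq 1 + M\lambda_n^{k-1}$, and substitution into the norm equivalence delivers $\|(\lambda_n E - A)^{-1}\| \leq C\lambda_n^{k-1}$, which is $\mathbf{(R_k^w)}$. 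The first half of (b) is the immediate reverse: $\|R_l(\lambda_n)\| \leq \|E\|\,\|(\lambda_n E - A)^{-1}\| \leq \|E\|M\lambda_n^{k-1}$, so $\|\lambda_n^{1-k} R_l(\lambda_n)\|$ is bounded, i.e., $\mathbf{(G_{k+1})}$ holds.

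The sharpness claim in (b), that $R_l$ need not satisfy $\mathbf{(G_{k-1})}$, I would establish with a short finite-dimensional counterexample such as the $k \times k$ nilpotent Kronecker pencil with $E$ the standard nilpotent shift and $A = I$: then $(\lambda E - A)^{-1} = -\sum_{j=0}^{k-1}\lambda^j E^j$ has norm $\asymp \lambda^{k-1}$, while $R_l(\lambda) = E(\lambda E - A)^{-1} = -\sum_{j=1}^{k-1}\lambda^{j-1} E^{j}$ has norm $\asymp \lambda^{k-2}$, so $\mathbf{(R_k)}$ is satisfied but $\mathbf{(G_{k-1})}$ (which would require $\|\lambda^{3-k}R_l(\lambda)\|$ bounded) fails. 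For (c), when $A$ is additionally bounded, the algebraic identity rearranges to $\lambda R_l(\lambda) = I + A(\lambda E - A)^{-1}$; combined with $\mathbf{(R_k^w)}$ this gives $\|R_l(\lambda_n)\| \leq \lambda_n^{-1}(1 + \|A\|M\lambda_n^{k-1}) = O(\lambda_n^{k-2})$, which is $\mathbf{(G_k)}$.

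For (d), the plan is to combine the iterated pseudo-resolvent identity \eqref{repeated},
\[
R_l(\lambda)\, R_l(\omega)^{k-1} = \sum_{i=1}^{k-1} \frac{(-1)^{i+1}}{(\lambda-\omega)^i}\, R_l(\omega)^{k-i} + \frac{(-1)^{k-1}}{(\lambda-\omega)^{k-1}}\, R_l(\lambda),
\]
with the uniform estimate $\|R_l(\lambda)\| \leq C\lambda^{k-2}$ valid now for all $\lambda > \omega$, since $\mathbf{(R_k)}$ is the strong version of the bound exploited in (c). For $\lambda$ large so that $\lambda \asymp \lambda-\omega$, each summand on the right is of order $(\lambda-\omega)^{-1}$, producing the composition-norm bound $\|R_l(\lambda)R_l(\omega)^{k-1}\| \leq M/(\lambda-\omega)$. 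The main obstacle is that $\mathbf{(D_k)}$ demands instead the restricted operator-norm bound $\|R_l(\lambda)x\| \leq M(\lambda-\omega)^{-1}\|x\|$ on the subspace $\ran R_l(\omega)^{k-1}$; writing $x = R_l(\omega)^{k-1}y$, the difficulty is to eliminate the dependence on $\|y\|$, which may be arbitrarily large relative to $\|x\|$ in the absence of closed-range information. I would address this via a telescoping cancellation: substituting a partial expansion of $R_l(\lambda)y$ back into the identity causes the intermediate $R_l(\omega)^j y$ terms to cancel in pairs and leaves $R_l(\lambda)x = (\lambda-\omega)^{-1}(x - R_l(\lambda) R_l(\omega)^{k-2} y)$, and iterating this reduction --- using boundedness of $A$ through $A(\omega E - A)^{-1} = \omega R_r(\omega) - I$ to re-express the residual $R_l(\omega)^{k-2}y$ in terms of $x$ and bounded corrections --- should eliminate $y$ entirely. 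Carrying out this bookkeeping carefully is the most delicate step, and is precisely where boundedness of $A$ (rather than mere closedness) enters.
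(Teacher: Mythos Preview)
Parts (a), (c), and the first half of (b) are correct. Your route via the algebraic identity $\lambda E(\lambda E-A)^{-1}=I+A(\lambda E-A)^{-1}$ together with the norm equivalence of Proposition~\ref{prop:normeq} is a clean alternative to the paper's approach, which instead inserts $(A-\mu E)(A-\mu E)^{-1}$ at a fixed reference point $\mu\in\rho(E,A)$ and uses the resolvent identity \eqref{residpencil2}. Both arguments yield the same estimates with comparable effort.

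There is a genuine gap in your treatment of the second assertion in (b). You read ``$R_l$ does not fulfil $\mathbf{(G_{k-1})}$'' as the existential claim that it \emph{need not} hold, and supply a single nilpotent example. The paper, however, proves the universal implication: from the lower bound
\[
\|E(A-\lambda E)^{-1}\|\ \ge\ \|(A-\mu E)^{-1}\|^{-1}\,\bigl\|(A-\mu E)^{-1}E(A-\lambda E)^{-1}\bigr\|\ =\ \|(A-\mu E)^{-1}\|^{-1}\,\frac{\bigl\|(A-\mu E)^{-1}-(A-\lambda E)^{-1}\bigr\|}{|\mu-\lambda|}
\]
one sees that $\mathbf{(G_{k-1})}$ would force $\mathbf{(R_{k-1}^w)}$; read with the implicit hypothesis that $k$ is the \emph{smallest} index for which $\mathbf{(R_k^w)}$ holds (as the surrounding discussion of indices intends), this rules out $\mathbf{(G_{k-1})}$ for every such pencil, not merely for one example. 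Your counterexample, while correct in itself, does not establish this.

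For (d), your plan via the iterated identity \eqref{repeated} parallels the paper's iteration of \eqref{residpencil2}, and you rightly isolate the crux: passing from a bound on $R_l(\lambda)R_l(\omega)^{k-1}y$ to one on $R_l(\lambda)x$ in terms of $\|x\|$ rather than $\|y\|$. But the proposed ``telescoping cancellation'' is too vague to verify---the recursion $R_l(\lambda)x=(\lambda-\omega)^{-1}\bigl(x-R_l(\lambda)R_l(\omega)^{k-2}y\bigr)$ still carries the pre-image, and the claim that boundedness of $A$ lets you ``re-express $R_l(\omega)^{k-2}y$ in terms of $x$'' is not substantiated. The paper's own argument is very terse at exactly this point; you should make the elimination of $y$ explicit before the step can be accepted.
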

\begin{proof}
An essential ingredient in the proof is the resolvent identity
\begin{align}
\label{residpencil2}
(\mu-\lambda)(A-\mu E)^{-1}E(A-\lambda E)^{-1}&=(A-\mu E)^{-1}-(A-\lambda E)^{-1}
\end{align}
which holds for all $\mu,\lambda\in\rho(E,A)$. 
\begin{itemize}
\item[\rm (a)]
Assume that $R_{l}$ satisfies $\mathbf{(G_k)}$ for some $k\geq 1$. Then $\mathbf{(R_k^w)}$ follows, since, for some $\mu\in\rho(E,A)$, \eqref{residpencil2} gives
\begin{align*}
M&\geq \|\lambda_n^{-k+2}E(A-\lambda_n E)^{-1}\|\\&=\|\lambda_n^{-k+2}(A-\mu E)(A-\mu E)^{-1}E(A-\lambda_n E)^{-1}\|\\
&\geq \|(A-\mu E)^{-1}\|^{-1}\|\lambda_n^{-k+2}(\mu-\lambda_n)^{-1}((A-\mu E)^{-1}-(A-\lambda_n E)^{-1})\|.
\end{align*}
\item[\rm (b)]
Assume that $\mathbf{(R_k^w)}$ holds. Then, by
\[
\|E(A-\lambda E)^{-1}\|\leq \|E\|\|(A-\lambda E)^{-1}\|,
\]
we obtain that $\mathbf{(G_{k+1})}$ holds. Furthermore,
 \begin{align*}
  \|E(A-\lambda E)^{-1}\|&\geq \|(A-\mu E)^{-1}\|^{-1}\|(A-\mu E)^{-1}E(A-\lambda E)^{-1}\|\\&=\|(A-\mu E)^{-1}\|^{-1}\|(\mu-\lambda)^{-1}((A-\mu E)^{-1}-(A-\lambda E)^{-1})\|,
 \end{align*}
which means that $R_l$ does not fulfill $\mathbf{(G_{k-1})}$. 
\item[\rm (c)] Assume that $A$ is bounded. Then \eqref{residpencil2} leads to
 \begin{align}
\label{A_bdd_estim}
 \begin{split}
 \|E(A-\lambda E)^{-1}\|&= \|(A-\mu E)(A-\mu E)^{-1}E(A-\lambda E)^{-1}\|\\&\leq \|A-\mu E\|\|(\mu-\lambda)^{-1}((A-\mu E)^{-1}-(A-\lambda E)^{-1})\|.
 \end{split}
 \end{align}
 This implies that $R_{l}$ fulfills $\mathbf{(G_k)}$.
\item[\rm (d)] Assume that $\mathbf{(R_k)}$ holds for $k\geq 1$ and that $A$ is bounded.
 If $k=1$ then \eqref{A_bdd_estim} together with $\mathbf{(R_1)}$ implies $\mathbf{(D_1)}$.  If $k\geq 2$ and let $x\in \ran R_{l}(\mu)^{k-1}$. Then $x=E(A-\mu E)^{-1}y$ for some $y\in\ran R_{l}(\mu)^{k-2}$, and we obtain
 \begin{align*}
 \|E(A-\lambda E)^{-1}x\|&=
 \|E(A-\lambda E)^{-1}E(A-\mu E)^{-1}y\|\\&=\|E(\lambda-\mu)^{-1}((A-\mu E)^{-1}-(A-\lambda E)^{-1})y\|.
 \end{align*}

 Repeating the argument for $k\geq 3$, $y=E(A-\mu E)^{-1}z$ and $z\in \ran R_{l}(\mu)^{k-3}$, another application of \eqref{A_bdd_estim} shows that $\mathbf{(D_k)}$ holds. Further,  $\mathbf{(R_k)}$ implies
 \[
 \|\lambda^{-k+1}(\lambda E'-A')^{-1}\|=\|\lambda^{-k+1}(\lambda E-A)^{-1}\|\leq M,
 \]
 i.e.\ $(E',A')$ fulfills $\mathbf{(R_k)}$.
 As~a~consequence, \[R_{l}'(\lambda)=(E(A-\lambda E)^{-1})'=(A'-\lambda E')^{-1}E'=((E')^{-1}A'-\lambda)^{-1}\] 
 fulfills $\mathbf{(D_k)}$. \end{itemize}
 \end{proof}
Note that it is straightforward to show that an analogous statement to Proposition~\ref{prop:res_index} holds for right resolvent $R_r(\lambda)=(A-\lambda E)^{-1}E$.

Note that for unbounded $A$, it is in principle possible to replace $X$ with $\dom A$ equipped with the norm $\|x\|_{\mu E-A}=\|(\mu E-A)x\|_Z$ for some $\mu\in\rho(E,A)$. Then, for all $\lambda\in\C$,
\begin{align*}
\|(\lambda E-A)^{-1}\|_{\mu E-A}:=\|(\mu E-A)(\lambda E-A)^{-1}\| \phantom{:}=\|I+(\mu-\lambda)E(\lambda E-A)^{-1}\|. \end{align*}
This can be used to show equality between the index of $R_{AE^{-1}}$ and the resolvent index. Namely, by 
\[
|\lambda^{-k+1}|\|I+(\mu-\lambda)E(\lambda E-A)^{-1}\|=\|\lambda^{-k+1}(\lambda E-A)^{-1}\|_{\mu E-A}\leq M
\]
we are led to the existence of some $\hat M$ with
\[
\|\lambda^{-k+2}E(\lambda E-A)^{-1}\|\leq \hat M.
\]
In other words, $\mathbf{(G_k)}$ holds. We briefly remark that, by Proposition~\ref{prop:normeq}, the norm $\|\cdot\|_{\mu E-A}$ is equivalent to the graph norm of $A$.

\subsection{Comparison with tractability index}
As another index concept for infinite dimensional DAEs, we recall the \emph{tractability} \emph{index} from \cite{ReisTisc05,Reis06}, see also \cite{Marz96}. For Hilbert spaces $X$ and $Z$, assume that $E\in L(X,Z)$ has closed range, and $A:\dom A\to Z$, such that $\rho(E,A)\neq\emptyset$. The ADAE \eqref{adae} is said to have \textit{tractability index} $k$, if the following sequence exists  
\begin{equation}\label{eq:chain}\begin{aligned}
&E_0:=E,& &A_0:=A,\\
&Q_i\in L(X)\cap L(\dom A),& &Q_i^2=Q_i, P_i^2=P_i,\\
&\ran Q_i=N_i:=\ker E_i, &&\ker Q_i=\ran P_i\supset\sum_{j=0}^{i-1}N_j\\
&E_{i+1}:=E_i-A_iQ_i,&&\dom E_{i+1}=\dom E_i\cap(\dom A_i+\ker Q_i)\\
&A_{i+1}:=A_iP_i, &&\dom A_{i+1}= \dom A_i+\ran Q_i,
\end{aligned}
\end{equation}
and $N_{k}=\{0\}$, and, if $k\geq1$, $N_{k-1}\neq\{0\}$. It is shown in~\cite[Thm.~4.1]{ReisTisc05} and  that ADAEs with tractability index $k$, and the additional property that 
$\ran E+A\sum_{i=0}^{k-1}N_i$ is closed, can be transformed into an upper triangular form 
\begin{align}
\label{decoupling}
WET=\begin{bmatrix}N&0\\0&I\\0&0\end{bmatrix}\in L(X_1\times X_2,X_1\times X_2\times X_3),\quad WAT=\begin{bmatrix}I&K\\0&\mathfrak{U}\\0&\Gamma\end{bmatrix}
\end{align}
for some Hilbert spaces $X_1,X_2,X_3$ and $W\in L(Z,X_1\times X_2)$ injective with dense range, $T\in L(X_1\times X_2,X)$ bijective, some closed and densely defined operators $\mathfrak{U}:X_2\supseteq \dom\mathfrak{U}\rightarrow X_2$, $K:X_2\supseteq \dom K\rightarrow X_1$, $\Gamma:X_2\supseteq \dom\Gamma\rightarrow X_3$, and some nilpotent $N\in L(X_1)$ with index $k$, i.e.\ $N^k=0$ and $N^{k-1}\neq 0$. It has been shown in \cite[Sec.~2.2.1]{Reis06} that there exist (even practically motivated) operator pencils which are not equivalent to a block diagonal pencil. In other words, it is not always possible to achieve the above form \eqref{decoupling} in which $K=0$. Further note that, if additionally
\begin{equation}\label{eq:kerEi}
    \ker E_i\subset \dom A,\quad i=0,\ldots, k-1,
\end{equation}
then $E_k:X\to Z$ is bounded, and the construction of $W$ in \cite{ReisTisc05} yields that $W$ is bijective, that is, also the inverse of $W$ is bounded. It follows further from 
\cite[Thm.~2.8]{Reis06}
that 
\begin{equation}\label{eq:KL}
    K\in L\Big(\dom\left[\begin{smallmatrix}\mathfrak{U}\\\Gamma\end{smallmatrix}\right],X_1\Big).
\end{equation}
The transformed pencil \eqref{decoupling} corresponds to a~triangular ADAE of the form
\begin{align}
\label{adae_reg}
\ddts \begin{bmatrix}
    N&0\\0&I\\0&0
\end{bmatrix}\begin{pmatrix}x_1(t)\\x_2(t)\end{pmatrix}=\begin{bmatrix}
     I&K\\0&\mathfrak{U}\\0&\Gamma
\end{bmatrix}\begin{pmatrix}x_1(t)\\x_2(t)\end{pmatrix}+\begin{pmatrix} f_1(t)\\
    f_2(t)\\f_3(t)
\end{pmatrix},\quad  \begin{pmatrix}
    Nx_1(0)\\ x_2(0)
\end{pmatrix}=\begin{pmatrix}
    Nx_{10}\\ x_{20}
\end{pmatrix},
\end{align}
for 
\[\begin{pmatrix}x_1(t)\\x_2(t)\end{pmatrix}=T^{-1}x(t),\; \begin{pmatrix}f_1(t)\\f_2(t)\\f_3(t)\end{pmatrix}=Wf(t),\; \begin{pmatrix}x_{10}\\x_{20}\end{pmatrix}=T^{-1}x_0.\]
Since $N$ is nilpotent with index $k$, we obtain that for all $\lambda\in\C$,
\begin{align}
\label{Ninv}
(\lambda N-I)^{-1}=-I-\ldots-(\lambda N)^{k-1}.
\end{align}
Hence, by further invoking \eqref{eq:KL}, regularity of the ADAE
 \eqref{adae} implies regularity of the ``lower right block'' of \eqref{adae_reg} with, in particular,
\[
\rho(E,A)=\rho\left(\begin{bmatrix}
    I\\ 0
\end{bmatrix},\begin{bmatrix}
   \mathfrak{U} \\ \Gamma
\end{bmatrix}\right)
\]
and
\begin{equation}
\label{eq:decres2}
(\lambda WET-WAT)^{-1}=\begin{pmatrix}(\lambda N-I)^{-1}~~&(\lambda N-I)^{-1}K\begin{bmatrix}\lambda I-\mathfrak{U}\\ -\Gamma\end{bmatrix}^{-1}\\0&\begin{bmatrix}\lambda I-\mathfrak{U}\\ -\Gamma\end{bmatrix}^{-1}\end{pmatrix},
\end{equation}
Note that the resolvent of 
$\Big(\Big[\begin{smallmatrix}
    I\\ 0    
\end{smallmatrix}\Big],\Big[\begin{smallmatrix}
    \mathfrak{U}\\ \Gamma    
\end{smallmatrix}\Big]
\Big)$
has the form
\begin{equation}\label{eq:lrres}
    \begin{bmatrix}
   sI-\mathfrak{U} \\ -\Gamma
\end{bmatrix}^{-1}=\begin{bmatrix}
   (sI-\mathfrak{U}|_{\ker\Gamma})^{-1} \,&\, -\Gamma|_{\ker sI-\mathfrak{U}}^{-1}
\end{bmatrix}\in L(X_2\times X_3,X_2).
\end{equation}
Further, by \eqref{eq:KL}, we have 
\[K\begin{bmatrix}\lambda I-\mathfrak{U}\\ -\Gamma\end{bmatrix}^{-1}\in L(X_2\times X_3,X_1)\quad\forall\,\lambda\in\rho(E,A).\]

In the following proposition we show that - under a certain additional assumption - a finite tractability index implies that $\mathbf{(R_{k_t})}$ and $\mathbf{(G_{k_t})}$ hold. 

\begin{proposition}
\label{prop:tithenri}
Consider an ADAE~\eqref{adae_subsec} with tractability index $k\in\N$. Assume that the operators in~\eqref{decoupling} satisfy
\begin{align*}
\exists~\omega>0:\quad (\omega,\infty)\subseteq\rho\left(\begin{bmatrix}
    I\\ 0
\end{bmatrix},\begin{bmatrix}
   \mathfrak{U} \\ \Gamma
\end{bmatrix}\right),\text{ and }\quad 
\sup_{\lambda>\omega}\left\|\begin{bmatrix}
   \lambda-\mathfrak{U} \\ \Gamma
\end{bmatrix}^{-1}\right\|< \infty,\quad \sup_{\lambda>\omega}\left\|K\begin{bmatrix}
   \lambda-\mathfrak{U} \\ \Gamma
\end{bmatrix}^{-1}\right\|< \infty.
\end{align*}
then $(\omega,\infty)\subseteq\rho(E,A)$ and $k$ is the smallest number such that $\mathbf{(R_{k})}$ holds. If, additionally, \eqref{eq:kerEi}, $k$ is the smallest number such that $\mathbf{(G_{k})}$ holds.
\end{proposition}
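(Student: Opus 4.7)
The plan is to read the growth rate of $(\lambda E-A)^{-1}$ and its minimality directly off the triangular decoupling \eqref{decoupling} via the explicit block resolvent \eqref{eq:decres2}. Since $N^{k}=0$, formula \eqref{Ninv} gives a bounded inverse of $\lambda N-I$ for every $\lambda\in\C$; combined with bijectivity of $\begin{bmatrix}\lambda I-\mathfrak{U}\\ -\Gamma\end{bmatrix}$ on $(\omega,\infty)$ and the mapping property \eqref{eq:KL}, the matrix in \eqref{eq:decres2} is a bounded inverse of $\lambda WET-WAT$. Using bijectivity of $T$ and injectivity of $W$, the factorisation $(\lambda E-A)^{-1}=T(\lambda WET-WAT)^{-1}W$ then establishes $(\omega,\infty)\subseteq\rho(E,A)$.

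To obtain $\mathbf{(R_{k})}$ I would bound the blocks of \eqref{eq:decres2}: the $(1,1)$-block by $\|(\lambda N-I)^{-1}\|\le\sum_{j=0}^{k-1}|\lambda|^{j}\|N\|^{j}\le C|\lambda|^{k-1}$, the $(2,2)$-block uniformly in $\lambda$ by hypothesis, and the $(1,2)$-block by $|\lambda|^{k-1}$, since it equals the $(1,1)$-block times the bounded factor $K\begin{bmatrix}\lambda I-\mathfrak{U}\\ -\Gamma\end{bmatrix}^{-1}$. Multiplying by $T$ and $W$ yields $\|\lambda^{-k+1}(\lambda E-A)^{-1}\|\le M$. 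For minimality, $N^{k-1}\ne 0$ provides some $x_{1}^{*}\in X_{1}$ with $N^{k-1}x_{1}^{*}\ne 0$, and the identity $(\lambda N-I)^{-1}x_{1}^{*}=-\sum_{j=0}^{k-1}\lambda^{j}N^{j}x_{1}^{*}$ forces $\|(\lambda WET-WAT)^{-1}\|$ to be of exact order $|\lambda|^{k-1}$.

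Under the additional assumption \eqref{eq:kerEi}, $W$ is bijective, so the closed graph theorem furnishes a bounded $W^{-1}$, and the lower bound on $\|(\lambda WET-WAT)^{-1}\|$ transfers directly to $\|(\lambda E-A)^{-1}\|$, ruling out $\mathbf{(R_{k-1})}$. Writing further $E(\lambda E-A)^{-1}=W^{-1}\cdot WET(\lambda WET-WAT)^{-1}\cdot W$, the $(1,1)$-block of $WET(\lambda WET-WAT)^{-1}$ equals $N(\lambda N-I)^{-1}=-\sum_{j=0}^{k-2}\lambda^{j}N^{j+1}$, which is only of order $|\lambda|^{k-2}$; the remaining blocks are either bounded or of the same order. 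This gives $\|\lambda^{2-k}E(\lambda E-A)^{-1}\|\le M$, i.e.\ $\mathbf{(G_{k})}$, with minimality again from $N^{k-1}\ne 0$.

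I expect the main obstacle to be the minimality part of the first statement, namely ruling out $\mathbf{(R_{k-1})}$ when $W$ is only injective with dense range rather than bijective. The lower bound on $\|(\lambda WET-WAT)^{-1}\|$ is witnessed on the finite dimensional, closed subspace $\Span\{N^{j}x_{1}^{*}\colon 0\le j\le k-1\}\times\{0\}\times\{0\}$ of $X_{1}\times X_{2}\times X_{3}$, but vectors in that subspace need not lie in $\ran W$ and preimages under $W$ need not have $\lambda$-independent norm. Density of $\ran W$ combined with a compactness argument on this finite dimensional target should yield a sequence $(z_{n})\subset Z$ of bounded norm whose images approximate the worst direction closely enough to transfer the lower bound, but the quantitative estimate is delicate.
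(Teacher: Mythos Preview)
Your upper-bound arguments for $\mathbf{(R_k)}$ and, under \eqref{eq:kerEi}, for $\mathbf{(G_k)}$ via the block resolvent \eqref{eq:decres2} coincide with the paper's. You have also correctly isolated the real difficulty: minimality in $\mathbf{(R_k)}$ must be established \emph{without} assuming \eqref{eq:kerEi}, so a bounded $W^{-1}$ is not available, and your route of invoking that hypothesis at this point proves strictly less than what is claimed.

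Your proposed repair via density does not close the gap. Density of $\ran W$ lets you approximate the bad direction $(x_1^*,0,0)$ by images $Wz_n$, but it gives no control whatsoever on $\|z_n\|$: compactness on the finite-dimensional target side says nothing about preimage norms under an injective, non-surjective $W$, whose set-theoretic inverse is then unbounded. The ratio $\|(\lambda E-A)^{-1}z_n\|/\|z_n\|$ can therefore collapse even when $\|(\lambda WET-WAT)^{-1}Wz_n\|$ is large, so the estimate is not merely ``delicate'' but unavailable by this route.

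The paper's resolution is structural rather than approximate. It goes back to the explicit construction of $W$ in \cite{ReisTisc05} and uses that nonzero elements coming from $\ker E_{k-1}$ (which exist because the projector chain has not yet stagnated at step $k-1$) correspond, through that construction, to vectors whose image under $W$ lies in $X_1\times\{0\}\times\{0\}$ with nontrivial $N^{k-1}$-component. This furnishes a single fixed test vector $z\in Z$ with $Wz=(z_1,0,0)$ and $N^{k-1}z_1\neq 0$, so that
\[
\|(\lambda E-A)^{-1}z\|\;\ge\;\|T^{-1}\|^{-1}\,\|(\lambda N-I)^{-1}z_1\|
\]
grows like $|\lambda|^{k-1}$, with no approximation needed. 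To complete your argument you must exploit what the tractability-index construction actually produces, not just the abstract properties (injective, dense range) of $W$.
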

\begin{proof}
Let $\lambda\in(\omega,\infty)$. The bounded invertibility of $\lambda E-A$ follows from \eqref{eq:decres2}. 
Define the constants
\[M_1=\|W\|\,\|T\|,\quad M_2:= \sup_{\lambda>\omega}\left\|\begin{bmatrix}
   \lambda I-\mathfrak{U} \\ \Gamma
\end{bmatrix}^{-1}\right\|,\quad M_3:= \sup_{\lambda>\omega}\left\|K\begin{bmatrix}
   \lambda I-\mathfrak{U} \\ \Gamma
\end{bmatrix}^{-1}\right\|\]
Then, by using \eqref{eq:decres2}, we obtain for all $\lambda\in (\omega,\infty)$ that
\begin{align*}
\|(\lambda E-A)^{-1}\|&=\|T(\lambda WET-WAT)^{-1}W\|\\
&\leq M_1\|(\lambda WET-WAT)^{-1}\|\\&\leq M_1(\|(\lambda N-I)^{-1}\|+M_2+M_3).
\end{align*}
Then \eqref{Ninv} immediately implies $\mathbf{(R_{k})}$. 
Next, we show that for $k$ is the smallest number such that $\mathbf{(R_{k})}$ holds. If $k=0$, then nothing has to be shown. Hence, we assume that $k\ge1$, and we show that  $\mathbf{(R_{k-1})}$ does not hold. Since the projector chain \eqref{eq:chain} does not stagnate until $k$, we have $\ker E_{k-1}\neq0$. Let $z\in \ker E_{k-1}$. Then the construction of $W$ in \cite{ReisTisc05} yields that $z\in\ran W$ with 
\[W^{-1}z=\begin{pmatrix}z_1\\0\\0\end{pmatrix}\]
for some $z_1\in \ker N^{k-1}$. This gives, by invoking \eqref{eq:decres2}, 
\begin{align*}
\|(\lambda E-A)^{-1}z\|&\geq \|T^{-1}\|^{-1}\|(\lambda WET-WAT)^{-1}W^{-1}z\|\\&=\|T^{-1}\|^{-1}\|(\lambda N-I)^{-1}z_1\|\\
&=\|T^{-1}\|^{-1}\left\|\sum_{i=0}^{k-1}\lambda^iN^{i}z_1\right\|\\
&=\|T^{-1}\|^{-1}\lambda^{k-1}\left\|\left(\sum_{i=0}^{k-1}\lambda^{i-k+1}N^{i}z_1\right)+N^{k-1}z_1\right\|.
\end{align*}
Since the latter factor does not converge to zero for $\lambda\to\infty$, we have that $\|(\lambda E-A)^{-1}z\|$ cannot be bounded on $(\omega,\infty)$ by a polynomial of degree smaller than $k-1$. This means that $\mathbf{(R_{k-1})}$ is not fulfilled.\\
Now assume that further \eqref{eq:kerEi} holds. To show $\mathbf{(G_{k})}$ is true, and that $k$ is the smallest number with this property, we first make use of the fact that, 
by \eqref{eq:decres2} and \eqref{eq:lrres},
\begin{align*}
    E(\lambda E-A)^{-1}&=W^{-1}(WET)(\lambda WET-WAT)^{-1}W\\
&=W^{-1}\begin{bmatrix}N(\lambda N-I)^{-1}&(\lambda N-I)^{-1}K(\lambda I-\mathfrak{U}|_{\ker\gamma})^{-1}\\0& (\lambda I-\mathfrak{U}|_{\ker\gamma})^{-1}\end{bmatrix}W.
\end{align*}
Since, as previously stated, \eqref{eq:kerEi} implies boundedness of $W^{-1}$, and, further, $(\lambda N-I)^{-1}=-\sum_{i=1}^{k-1}\lambda^{i-1}N^{i}$, an application of an argumentation completely analogous to the one in the first part of the proof yields the desired result.
 \end{proof}

\section{Dissipativity-based conditions}
\label{sec:diss}
In this section, we present some sufficient conditions for a~closed linear relation $L$ fulfilling $\mathbf{(D_k)}$. These conditions are based on the {\em $\omega$-dissipativity condition}
\begin{align}
\label{dissip}
\forall\lambda>\omega:\quad \|y-\lambda x\|\geq (\lambda-\omega)\|x\|,\quad  \forall\, (x,y)\in L.
\end{align}
Note that the latter implies that $\ker(L-\lambda)=\{0\}$ for all $\lambda>\omega$.\\
First note that, by the Hahn-Banach theorem, for $x\in X$, there exists some $x'\in X'$ with $\|x'\|^2=\|x\|^2=\scpr{x'}{x}_{X',X}$. In other words, the {\em duality set}
\[\Jc(x)=\setdef{x'\in X'}{\|x'\|^2=\|x\|^2=\scpr{x'}{x}_{X',X}}\]
is non-empty. In the case where $X$ is a~Hilbert space, we have $\Jc(x)=\{x\}$, where we use the identification $X\cong X'$ by the Riesz representation theorem.

Below, we provide a characterization of the $\omega$-dissipativity condition and a sufficient condition and for a half-axis to be in the resolvent set. 
\begin{proposition}
\label{prop:oktober}
Let $X$ be a Banach space, $\omega\in\R$, and let $L$ be a closed linear relation in $X$. 
\begin{enumerate}[\rm(a)]
\item\label{prop:oktobera} $L$ fulfills the $\omega$-dissipativity condition \eqref{dissip} if, and only if,
\begin{align}
\label{dissip_HS}
\forall\, (x,y)\in L\;\;\exists\,x'\in\Jc(x)\,:\quad \re\scpr{x'}{y}_{X',X}\leq\omega\|x\|^2.
\end{align}
\item\label{prop:oktoberb} Assume that $L$ fulfills the $\omega$-dissipativity condition \eqref{dissip}, and there exists some $\lambda_0>\omega$, such that $L-\lambda_0$ is surjective. Then $(\omega,\infty)\subseteq\rho(L)$.
\item\label{prop:oktoberc} Assume that $(\omega,\infty)\subseteq\rho(L)$. Then $L$ fulfills the $\omega$-dissipativity condition \eqref{dissip} if, and only if, its pseudo-resolvent $R:\rho(L)\to L(X)$ of $L$ fulfills
    \begin{align*}
\forall\,\lambda>\omega:\quad   \|R(\lambda)\|\leq \frac{1}{\lambda-\omega}.
    \end{align*}
\end{enumerate}
\end{proposition}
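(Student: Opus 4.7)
The plan is to tackle the three parts separately, noting that (a) is a standard Lumer--Phillips style characterization, (c) follows essentially from the definitions of pseudo-resolvent and dissipativity, and (b) is a connectedness argument in the resolvent set that uses the bound established in (c).

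For part~(a), the reverse implication is elementary: given $x'\in\Jc(x)$ with $\re\scpr{x'}{y}_{X',X}\leq\omega\|x\|^2$ and $\lambda>\omega$, the computation
\[\|y-\lambda x\|\cdot\|x\|\geq \re\scpr{x'}{\lambda x-y}=\lambda\|x\|^2-\re\scpr{x'}{y}\geq(\lambda-\omega)\|x\|^2\]
immediately yields \eqref{dissip} after dividing by $\|x\|$ (the case $x=0$ being trivial). For the forward direction, I would use Hahn--Banach to obtain for each $\lambda>\omega$ a unit functional $x_\lambda'\in X'$ with $\scpr{x_\lambda'}{\lambda x-y}=\|\lambda x-y\|$, so that \eqref{dissip} forces $\lambda\re\scpr{x_\lambda'}{x}-\re\scpr{x_\lambda'}{y}\geq(\lambda-\omega)\|x\|$. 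Combining this with $|\re\scpr{x_\lambda'}{x}|\leq\|x\|$ yields on the one hand $\re\scpr{x_\lambda'}{y}\leq\omega\|x\|$ and on the other hand $\re\scpr{x_\lambda'}{x}\to\|x\|$ as $\lambda\to\infty$. By Banach--Alaoglu, the bounded net $(x_\lambda')$ has a weak-$*$ cluster point $x_\infty'$, which then satisfies $\|x_\infty'\|\leq 1$, $\scpr{x_\infty'}{x}=\|x\|$ (so necessarily $\|x_\infty'\|=1$), and $\re\scpr{x_\infty'}{y}\leq\omega\|x\|$. Setting $x':=\|x\|\,x_\infty'$ places it in $\Jc(x)$ with $\re\scpr{x'}{y}\leq\omega\|x\|^2$.

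For part~(c), which I would prove before (b) because its forward direction feeds into the latter, the implication ``dissipativity $\Rightarrow$ resolvent bound'' follows by fixing $z\in X$, setting $x:=R(\lambda)z$ and $y:=\lambda x+z$, noting that $(x,y)\in L$ with $y-\lambda x=z$, and invoking \eqref{dissip} to get $\|z\|\geq(\lambda-\omega)\|R(\lambda)z\|$. Conversely, for any $(x,y)\in L$ with $\lambda\in(\omega,\infty)\subseteq\rho(L)$, the identity $x=R(\lambda)(y-\lambda x)$ together with the resolvent bound gives \eqref{dissip} directly.

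For part~(b), the dissipativity already ensures $\ker(L-\lambda)=\{0\}$ for every $\lambda>\omega$, so only surjectivity of $L-\lambda$ remains. Define $\Lambda:=\rho(L)\cap(\omega,\infty)$, which is nonempty by assumption and open in $(\omega,\infty)$ since $\rho(L)$ is open. For closedness in $(\omega,\infty)$, observe that the proof of (c) actually gives the bound $\|(L-\mu)^{-1}\|\leq 1/(\mu-\omega)$ for every $\mu\in\Lambda$ (it uses only dissipativity and the existence of the resolvent at the single point $\mu$). Via the pseudo-resolvent power series \eqref{eq:powser}, this bound implies that the open disc of radius $\mu-\omega$ around $\mu$ lies entirely in $\rho(L)$. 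Hence, if $\lambda_n\in\Lambda$ converges to some $\lambda^*\in(\omega,\infty)$, then eventually $|\lambda^*-\lambda_n|<\lambda_n-\omega$, whence $\lambda^*\in\Lambda$. Connectedness of $(\omega,\infty)$ then forces $\Lambda=(\omega,\infty)$.

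The main obstacle I anticipate is the weak-$*$ extraction in part~(a): one has to verify that the weak-$*$ cluster point $x_\infty'$ inherits both the correct normalization (via the asymptotics $\re\scpr{x_\lambda'}{x}\to\|x\|$, which must be combined with $\|x_\infty'\|\leq 1$ to pin down $\|x_\infty'\|=1$) and the inequality $\re\scpr{x_\infty'}{y}\leq\omega\|x\|$ in the limit. The remaining parts are essentially bookkeeping once the right identifications are made.
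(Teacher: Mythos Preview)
Your proposal is correct and follows essentially the same approach as the paper: part~(a) via duality-map functionals and a Banach--Alaoglu extraction, part~(c) by direct rephrasing through $x=R(\lambda)z$, and part~(b) by combining the dissipativity-induced resolvent bound with the Neumann/power series expansion \eqref{eq:powser}. The only cosmetic difference is in~(b): the paper iterates explicitly (from $\lambda_0$ to $(0,2\lambda_0)$, then to $(0,3\lambda_0)$, etc.), whereas you package the same idea as an open--closed connectedness argument in $(\omega,\infty)$; both rest on the identical estimate $\|(L-\mu)^{-1}\|\leq 1/(\mu-\omega)$ and the resulting radius-$(\mu-\omega)$ disc in $\rho(L)$.
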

\begin{proof}
By considering $L-\omega$ instead of $L$, it is no loss of generality to assume that $\omega=0$ in the sequel.
\begin{itemize}
\item[\rm (a)] First assume that~\eqref{dissip_HS} holds for $\omega=0$. Let $(x,y)\in L$. Since, in the case where $x=0$, \eqref{dissip} trivially holds, we can assume that $x\neq 0$. Let $\lambda>0$, and let $y'_\lambda\in\Jc(y-\lambda x)$, and set $z_\lambda'=y_\lambda'/\|y_\lambda'\|$. Then 
\begin{align*}
\lambda \|x\|
&\leq \|y-\lambda x\|\\
&= \scpr{y-\lambda x}{z_\lambda'}\,\|x\|\\
&= \re\scpr{y}{z_\lambda'}-\lambda\re\scpr{x}{z_\lambda'}\\
&\leq\min\{\lambda \|x\|-\re\scpr{y}{z_\lambda'},\lambda\re\scpr{x}{z_\lambda'}+\|y\|\}.
\end{align*}
This gives 
\[\re\scpr{y}{z_\lambda'}\leq0,\quad \|x\|-\tfrac1\lambda \|y\|\leq \re\scpr{x}{z_\lambda'}.\]
By Alaoglu's theorem \cite[Sec.~8.7]{Alt16}, there exists some $z'\in X'$ which is an accumulation point of the sequence $(z_n')$. Then $\|z'\|\leq1$, and 
\[\re\scpr{y}{z'}\leq0,\quad \|x\|\leq \re\scpr{x}{z'}.\]
Hence, $x':=\|x\|\cdot z'\in\Jc(x)$, and
\[\re\scpr{y}{x'}\leq0,\]
which shows that \eqref{dissip_HS} is true for $\omega=0$.\\
For the reverse implication, assume that \eqref{dissip_HS} holds for $\omega=0$. Since \eqref{dissip_HS} is trivially fulfilled for $x=0$, we can now assume that $x$ is nonzero. Then
\begin{align*}
\|y-\lambda x\| \|x\|\geq  \scpr{\lambda x-y}{x'}
=\lambda \|x\|^2-\scpr{y}{x'}
\geq \lambda \|x\|^2.
\end{align*}
This shows that \eqref{dissip} holds for $\omega=0$.

\item[(b)] Assume that $L$ is $0$-dissipative $L-\lambda_0$ is surjective for some $\lambda_0>0$. Since $0$-dissipativity directly implies that $\ker L-\lambda_0=\{0\}$, we obtain that $(L-\lambda_0)^{-1}$ can be regarded as a~bounded operator $R(\lambda_0)\in L(X)$, and \eqref{dissip} (with $\omega=0$) moreover yields that $\|R(\lambda_0)\|\leq\tfrac1\lambda_0$. Hence, the power series  \eqref{eq:powser}
converges for all $\lambda\in(0,2\lambda_0)$. By the Neumann series \cite[Sec.~5.7]{Alt16}, we have 
\[R(\lambda)=R(\lambda_0)(I-(\lambda_0-\lambda)R(\lambda_0))^{-1}.\]
Now let $x\in X$. Then
for $y=(I-(\lambda_0-\lambda)R(\lambda_0))^{-1}x$, we have
\begin{align*}
(R(\lambda)x,x)&=(R(\lambda_0)(I-(\lambda_0-\lambda)R(\lambda_0))^{-1}x,x)\\
&=(R(\lambda_0)y,(I-(\lambda_0-\lambda)R(\lambda_0))y)\\
&=(R(\lambda_0)y,y-(\lambda_0-\lambda)R(\lambda_0))y)\in R(\lambda_0)^{-1}+(\lambda_0-\lambda)\\&=(L-\lambda_0)+(\lambda_0-\lambda)\\&=L-\lambda.
\end{align*}
Hence, $L-\lambda$ is surjective for all $\lambda\in(0,2\lambda_0)$.
Repeating this argumentation for $3/2\lambda_0$, we obtain that 
$L-\lambda$ is surjective for all $\lambda\in(0,3\lambda_0)$. Proceeding in this way, we see that $L-\lambda$ is surjective for all $\lambda>0$.
\item[(c)]  By rephrasing $x\rightsquigarrow R(\lambda)y$, $y-\lambda x\rightsquigarrow x$ in \eqref{dissip}, we obtain that $\omega$-dissipativity is equivalent to
    \begin{align*}
\forall\,\lambda>\omega:\quad   \|y\|\geq (\lambda-\omega)\|R(\lambda)y\|,\quad  \forall\, y\in X,
    \end{align*}
and the desired result follows immediately.
\end{itemize}
\end{proof}
If the linear relation is represented by an operator pencil $\lambda E-A$ via  $L=L_l=AE^{-1}$, then \eqref{dissip} is equivalent to
\begin{equation}
\label{eq:EAdiss}
\forall\lambda>\omega:\quad \|(\lambda E-A)x\|\geq (\lambda-\omega)\|Ex\|,\quad  \forall\, x\in\dom A,
\end{equation}
which is, by Proposition~\ref{prop:oktober}\,\eqref{prop:oktobera}, equivalent to 
\[
\forall\, x\in X\;\;\exists\,z'\in\Jc(Ex)\,:\quad \re\scpr{z'}{(A-\omega E)x}_{X',X}\leq0.
\]
If, additionally, $X$ is a Hilbert space, the latter is equivalent to 
\[
\re(Ex,(A-\omega E)x)_X\leq 0,\quad \forall x\in \dom A,
\]
where, hereby, $(\cdot,\cdot)_X$ stands for the inner product in $X$. Note that ADAEs with this property have recently been studied in \cite{JacoMorr22}.

As a consequence of the above findings, we can formulate a~result on regularity of operator pencils belonging to a~dissipative relation, and give an estimate for the left resolvent. The latter has also been obtained for the case of Hilbert spaces \cite[Thm.~3.6]{JacoMorr22}. 

\begin{corollary}
\label{cor:two}
Let $X$, $Z$ be Banach spaces, let $E\in L(X,Z)$, and let $A:X\supseteq\dom A\rightarrow Z$ be closed and densely defined, such that $L_l=AE^{-1}$ is $\omega$-dissipative for some $\omega\in\R$.\\
If $\ker E\cap\ker A=\{0\}$, and $\lambda E-A$ is surjective for some $\lambda>\omega$, then $(\omega,\infty)\subset\rho(E,A)$, and 
\begin{align}
\label{RAE_estim}
\forall\lambda>\omega:\quad \|E(A-\lambda E)^{-1}\|\leq\frac{1}{\lambda-\omega}.
\end{align}
In particular, the left resolvent $\lambda\mapsto E(A-\lambda E)^{-1}$ fulfills $\mathbf{(D_1)}$ with $M=1$.
\end{corollary}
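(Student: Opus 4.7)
The plan is to reduce the statement to Proposition~\ref{prop:oktober} applied to the linear relation $L_l = AE^{-1}$, treated as a closed linear relation in $Z$. The hypothesis that $L_l$ is $\omega$-dissipative is exactly \eqref{eq:EAdiss}, and a direct unpacking of $L_l-\lambda$ gives
\[
L_l - \lambda \;=\; \bigl\{\,(Ex,\;(A-\lambda E)x)\,:\,x\in\dom A\,\bigr\},
\]
so that $\ran(L_l-\lambda)=\ran(A-\lambda E)$. In particular, surjectivity of $\lambda_0 E - A$ for some $\lambda_0>\omega$ translates directly to surjectivity of $L_l-\lambda_0$, which is the missing ingredient needed to invoke Proposition~\ref{prop:oktober}\,\eqref{prop:oktoberb}. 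This yields $(\omega,\infty)\subseteq\rho(L_l)$.

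Next I would convert $(\omega,\infty)\subseteq\rho(L_l)$ into $(\omega,\infty)\subseteq\rho(E,A)$. Surjectivity of $\lambda E-A$ for $\lambda>\omega$ comes for free from $\ran(L_l-\lambda)=Z$. The key point, and the only place where the extra assumption $\ker E\cap \ker A=\{0\}$ is used, is injectivity: if $x\in\dom A$ satisfies $(\lambda E - A)x=0$, then $(Ex,Ax)\in L_l$ and $\omega$-dissipativity in the form \eqref{eq:EAdiss} forces $(\lambda-\omega)\|Ex\|\leq\|(\lambda E-A)x\|=0$, so $Ex=0$ and hence also $Ax=\lambda Ex=0$; the intersection assumption then gives $x=0$. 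Boundedness of $(\lambda E-A)^{-1}$ follows from the closed graph theorem (or Proposition~\ref{prop:normeq}), so $\lambda\in\rho(E,A)$.

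For the norm estimate \eqref{RAE_estim}, I would note that the pseudo-resolvent of $L_l$ coincides with the left resolvent,
\[
(L_l-\lambda)^{-1}\;=\;E(A-\lambda E)^{-1}\;=\;R_l(\lambda),
\]
which is clear by reading the definition of $L_l-\lambda$ backwards once $\lambda E-A$ is bijective. Proposition~\ref{prop:oktober}\,\eqref{prop:oktoberc} applied to $L_l$ then immediately delivers $\|R_l(\lambda)\|\leq (\lambda-\omega)^{-1}$. (Alternatively, substituting $x=(A-\lambda E)^{-1}z$ into \eqref{eq:EAdiss} gives the same bound by a one-line computation.) Since $\ran R_l(\omega)^{0}=Z$, the inequality is exactly $\mathbf{(D_1)}$ with $M=1$.

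The only mildly delicate step is the passage from $\rho(L_l)$ to $\rho(E,A)$: surjectivity transfers automatically, but injectivity of $\lambda E-A$ is \emph{not} the same as injectivity of $L_l-\lambda$ (the latter only controls $E\ker(\lambda E-A)$), and this is precisely where the assumption $\ker E\cap\ker A=\{0\}$ enters and cannot be dispensed with. Once this is observed, the rest is a direct application of Proposition~\ref{prop:oktober} and the identification $R_l=(L_l-\lambda)^{-1}$.
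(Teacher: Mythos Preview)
Your proposal is correct and follows essentially the same approach as the paper: apply Proposition~\ref{prop:oktober}\,\eqref{prop:oktoberb} to $L_l$ to obtain $(\omega,\infty)\subseteq\rho(L_l)$ and hence surjectivity of $\lambda E-A$, use \eqref{eq:EAdiss} together with $\ker E\cap\ker A=\{0\}$ for injectivity, and read off the norm bound from Proposition~\ref{prop:oktober}\,\eqref{prop:oktoberc}. Your write-up is in fact more explicit than the paper's, in particular in isolating why the kernel condition is needed (injectivity of $\lambda E-A$ versus injectivity of $L_l-\lambda$) and in spelling out the identification $(L_l-\lambda)^{-1}=R_l(\lambda)$.
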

\begin{proof}
 Proposition~\ref{prop:oktober}\,\eqref{prop:oktoberb} implies that $(\omega,\infty)\subseteq\rho(L)$ and hence 
 $\lambda E-A$ is surjective for all $\lambda>\omega$. Furthermore, \eqref{eq:EAdiss} together with $\ker E\cap\ker A=\{0\}$ yields $\ker\lambda E-A=\{0\}$ for all $\lambda>\omega$. Thus,  $(\omega,\infty)\subseteq\rho(E,A)$. Further, \eqref{RAE_estim} follows from  Proposition~\ref{prop:oktober}~\eqref{prop:oktoberc}. 
 \end{proof}
As a~consequence of the above result, \eqref{dissip_HS}
implies the existence of an $\omega$-stable degenerate semigroup, see also \cite{JacoMorr22} for the case of Hilbert spaces.
\begin{corollary}
Under the assumptions of Corollary~\ref{cor:two} and, further, $Z$ is locally sequentially weakly 
compact, it holds that
\begin{equation}\label{eq:Zdecomp}
Z=\overline{\ran E(A-\mu E)^{-1}}\oplus\ker E(A-\mu E)^{-1}\quad\forall\, \mu>\omega.
\end{equation}
Moreover, the operator $A_R$ as in \eqref{ahat} generates an $\omega$-stable semigroup $(T(t))_{t\geq 0}$ and
\[
T_R(t)(z_R+z_K)=T(t)z_R,\quad z_R\in\overline{E(A-\mu E)^{-1}Z},~ z_K\in \ker E(A-\mu E)^{-1}
\]
with, in particular,
\begin{equation}
\|T_R(t)\|\leq e^{-\omega t}\quad\forall\, t>0.\label{eq:omegacontr}    
\end{equation}
\end{corollary}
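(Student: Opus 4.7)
The plan is to establish the result in four steps: obtain the direct sum decomposition of $Z$; extract a closed densely defined operator on the range part via Theorem~\ref{thm:main}; use Hille--Yosida to generate a strongly continuous semigroup there; and lift this semigroup to a degenerate one on $Z$ with the stated norm estimate.

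For the decomposition, Corollary~\ref{cor:two} gives that the left pseudo-resolvent $R_l(\lambda)=E(A-\lambda E)^{-1}$ satisfies $\mathbf{(D_1)}$ with $M=1$, and hence, by Theorem~\ref{thm:main}(a), also $\mathbf{(G_1)}$. Combined with the hypothesis that $Z$ is locally sequentially weakly compact, Kato's decomposition recalled after \eqref{k=1_dec} applies to $R_l$ and yields \eqref{eq:Zdecomp}; the subspaces on the right are independent of the particular choice of $\mu\in(\omega,\infty)$ by the resolvent identity \eqref{LRresid}.

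Next, Theorem~\ref{thm:main}(c) specialized to $k=1$ provides a closed, densely defined operator $A_R$ on $\overline{\ran R_l(\mu)}$ whose resolvent is the restriction of $R_l(\lambda)$ to that subspace for every $\lambda\in(\omega,\infty)$. The estimate $\|R_l(\lambda)\|\le(\lambda-\omega)^{-1}$ from Corollary~\ref{cor:two} transfers to this restriction, and submultiplicativity gives $\|R_l(\lambda)^n\|\le(\lambda-\omega)^{-n}$ for every $n\in\N$. The Hille--Yosida theorem then produces a strongly continuous semigroup $(T(t))_{t\ge 0}$ on $\overline{\ran R_l(\mu)}$ generated by $A_R$ and satisfying the exponential bound at rate $\omega$.

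Finally, the decomposition \eqref{eq:Zdecomp} defines a bounded projector $P:Z\to \overline{\ran R_l(\mu)}$ along $\ker R_l(\mu)$, and setting $T_R(t):=T(t)P$ yields the desired degenerate semigroup: the semigroup law and strong continuity on $(0,\infty)$ are routine, and the splitting formula is immediate from $T(t)Pz = T(t)z_R$ when $z=z_R+z_K$. To obtain the norm bound \eqref{eq:omegacontr}, the key step is to represent $P$ as the strong limit $Pz=\lim_{\lambda\to\infty}\lambda R_l(\lambda)z$: on $\overline{\ran R_l(\mu)}$ this is the Hille--Yosida-type approximation $\lambda R_l(\lambda)\to I$ strongly, already verified in the proof of Theorem~\ref{thm:main}, while on $\ker R_l(\mu)$ the resolvent identity \eqref{LRresid} forces $R_l(\lambda)=0$ for all $\lambda\in\rho(E,A)$. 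The uniform estimate $\|\lambda R_l(\lambda)\|\le \lambda/(\lambda-\omega)\to 1$ then delivers $\|P\|\le 1$, whence $\|T_R(t)\|\le\|T(t)\|$ gives the claimed bound. The main obstacle is precisely this control on $\|P\|$: the bare topological direct sum from Kato's theorem does not automatically yield a contractive projection, and it is the Hille--Yosida representation of $P$ that converts the uniform dissipativity of $L_l$ into the degenerate semigroup's exponential estimate.
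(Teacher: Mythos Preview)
Your proof is correct and follows the same overall route as the paper: Kato's decomposition under $\mathbf{(G_1)}$ plus local sequential weak compactness for \eqref{eq:Zdecomp}, then Theorem~\ref{thm:main}(c) to extract $A_R$, then the Hille--Yosida/Feller--Miyadera--Phillips estimate $\|(A_R-\lambda)^{-n}\|\le(\lambda-\omega)^{-n}$ to generate the contraction-type semigroup on $\overline{\ran R_l(\mu)}$.

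Where you differ is in the final norm bound. The paper's proof simply invokes Feller--Miyadera--Phillips and asserts \eqref{eq:omegacontr}, tacitly identifying $T$ with $T_R$; but FMP only yields $\|T(t)\|\le e^{\omega t}$ on the invariant subspace, not the bound for $T_R$ on all of $Z$. You close this gap by showing $\|P\|\le 1$ via the strong-limit representation $Pz=\lim_{\lambda\to\infty}\lambda R_l(\lambda)z$ together with $\|\lambda R_l(\lambda)\|\le\lambda/(\lambda-\omega)\to 1$. This extra step is genuinely needed for the degenerate semigroup estimate as stated, so your argument is in fact more complete than the paper's on this point.
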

\begin{proof}
Since, by Corollary~\ref{cor:two}, $\mathbf{(D_k)}$ is fulfilled, we can, as stated in \eqref{k=1_dec}, apply the results from \cite{Kato59} to see that the decomposition \eqref{eq:Zdecomp} holds.\\
Moreover, the estimate \eqref{RAE_estim} implies
\[
\|(A_R-\lambda I)^{-n}\|\leq\|((AE^{-1}-\lambda)^{-1})^n\|\leq\|(AE^{-1}-\lambda)^{-1}\|^n\leq\tfrac{1}{(\lambda-\omega)^n}
\]
and hence, by the Feller-Miyadera-Phillips theorem \cite[Thm.~II.3.8]{EngeNage06} $A_R$ generates a~strongly continuous semigroup $T_R(\cdot)$ with \eqref{eq:omegacontr}.
 \end{proof}
In the previous considerations we have analyzed operator pencils that correspond to dissipative linear relations. There might be several situation in which this is a~too strong assumption though the corresponding ADAE ``behaves dissipatively'' is some sense. In the following, for a~linear relation with non-empty resolvent, we relax the dissipativity assumption to dissipativity on a~subspace formed by the closure of a~power of the resolvent.

\begin{proposition}
    Let $X$ be a Banach space, $\omega\in\R$, and let $L$ be a closed linear relation in $X$, and assume that there exists some $\omega\in\R$ with $[\omega,\infty)\subset\rho(L)$.
   Then the following statements are equivalent for the pseudo-resolvent $R:\rho(L)\to L(X)$ of $L$:
    \begin{itemize}
\item[\rm (i)] 
\[\forall\, (x,y)\in L\text{ with }x\in \ran R(\omega)^{k-1} \;\;\exists\,x'\in\Jc(x)\,:\quad \re\scpr{x'}{y}_{X',X}\leq\omega\|x\|^2.\]
\item[\rm (ii)] 
\[\forall \lambda >\omega, \forall x\in \ran R(\omega)^{k-1}:\quad 
\|R(\lambda)x\|\leq \frac{ \|x\|}{\lambda-\omega}.\]
\end{itemize}\end{proposition}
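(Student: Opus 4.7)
My plan is to mimic the template of Proposition~\ref{prop:oktober} but with every step restricted to the subspace $\ran R(\omega)^{k-1}$. Throughout I reduce to the case $\omega=0$ by replacing $L$ with $L-\omega$, which shifts the resolvent set while preserving the pseudo-resolvent structure, the closed invariant subspaces $\ran R(\mu)^\ell$ and $\ker R(\mu)^\ell$, and the duality set $\mathcal{J}(x)$. So from now on assume $[0,\infty)\subseteq\rho(L)$ and recall that the pseudo-resolvent identity implies that $R(\lambda)$ and $R(0)$ commute, hence $R(\lambda)(\ran R(0)^{k-1})\subseteq\ran R(0)^{k-1}$ for every $\lambda>0$.

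For the direction (i)$\Rightarrow$(ii), fix $\lambda>0$ and $x\in\ran R(0)^{k-1}$ and set $u:=R(\lambda)x$; by the commutation just noted, $u\in\ran R(0)^{k-1}$. From $R(\lambda)=(L-\lambda)^{-1}$ we immediately get $(u,x+\lambda u)\in L$. Applying (i) to this pair yields some $u'\in\mathcal{J}(u)$ with
\[
\re\scpr{u'}{x+\lambda u}\leq 0,\qquad\text{i.e.}\qquad \lambda\|u\|^2\leq -\re\scpr{u'}{x}\leq\|u'\|\,\|x\|=\|u\|\,\|x\|.
\]
The case $u=0$ is trivial; otherwise divide by $\|u\|$ to obtain $\|R(\lambda)x\|=\|u\|\leq\|x\|/\lambda$, which is (ii).

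For (ii)$\Rightarrow$(i), let $(x,y)\in L$ with $x\in\ran R(0)^{k-1}$ and $x\neq 0$. Following the template of Proposition~\ref{prop:oktober}\,\eqref{prop:oktobera} $\Rightarrow$, my intermediate goal is the restricted dissipativity estimate
\[
\|y-\lambda x\|\geq \lambda\|x\|\qquad\forall\,\lambda>0. \tag{$\ast$}
\]
Once $(\ast)$ is available, Hahn-Banach produces $y_\lambda'\in\mathcal{J}(y-\lambda x)$; normalising $z_\lambda':=y_\lambda'/\|y_\lambda'\|$ and exploiting the chain of inequalities already recorded in the proof of Proposition~\ref{prop:oktober}\,\eqref{prop:oktobera} gives both $\re\scpr{z_\lambda'}{y}\leq 0$ and $\|x\|-\tfrac{1}{\lambda}\|y\|\leq\re\scpr{z_\lambda'}{x}$. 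Banach-Alaoglu then extracts a weak-$*$ accumulation point $z'$ with $\|z'\|\leq 1$, $\re\scpr{z'}{y}\leq 0$, and $\|x\|\leq\re\scpr{z'}{x}$; setting $x':=\|x\|\cdot z'$ lands in $\mathcal{J}(x)$ and delivers (i).

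The main obstacle is establishing $(\ast)$ from (ii). The natural identity is $R(\lambda)(y-\lambda x)=x$, so if $y-\lambda x$ itself lay in $\ran R(0)^{k-1}$, $(\ast)$ would be immediate from (ii). This need not be the case when $k\geq 2$, so the crux of the argument is to realise $x$ as $R(\lambda)z$ for some $z\in\ran R(0)^{k-1}$ with $\|z\|\leq\|y-\lambda x\|$; this amounts to shifting $y$ by an element of $\mul L=\ker R(0)$ (which leaves the pair $(x,y)\in L$ within the same equivalence class of representatives) so as to land in the invariant subspace $\ran R(0)^{k-1}$, or equivalently to passing to the pseudo-resolvent $R(\cdot)|_{\overline{\ran R(0)^{k-1}}}$ on the closed subspace $Y:=\overline{\ran R(0)^{k-1}}$ and invoking Proposition~\ref{prop:oktober}\,\eqref{prop:oktoberc} for the $0$-dissipative relation $L_Y$ constructed from this restricted pseudo-resolvent via Proposition~\ref{prop:relationfrompsres}. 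This reduction is where all the real work sits; the rest is routine bookkeeping.
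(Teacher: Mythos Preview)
Your argument for (i)$\Rightarrow$(ii) is correct and cleanly done. For (ii)$\Rightarrow$(i), the route you propose---passing to the restricted pseudo-resolvent on $Y:=\overline{\ran R(\omega)^{k-1}}$ and to the relation $L_Y$ built from it via Proposition~\ref{prop:relationfrompsres}---is exactly the paper's approach: the paper's entire proof reads ``this follows from Proposition~\ref{prop:oktober}\,\eqref{prop:oktobera} by replacing $L$ with $L_k:=L\cap(Y\times Y)$,'' and one checks directly that $L_k=L_Y$.

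You are right, however, to flag the reduction as ``where all the real work sits,'' and you should not wave it off as bookkeeping. Applying Proposition~\ref{prop:oktober} to $L_k$ (as a relation in $Y$) delivers the duality inequality only for pairs $(x,y)\in L_k$, that is, with \emph{both} coordinates in $Y$; condition (i) as stated asks for it for every $(x,y)\in L$ with $x\in\ran R(\omega)^{k-1}$, placing no restriction on $y$. These two quantifications differ exactly by the multivalued part: if $(x,y')\in L_k$ then $(x,y'+m)\in L$ for every $m\in\mul L=\ker R(\omega)$, and nothing in (ii) controls $\re\scpr{x'}{m}$ for $x'\in\Jc(x)$. Your ``shift $y$ by an element of $\mul L$'' idea therefore produces the estimate $(\ast)$---and hence the duality inequality---only for the shifted representative $y-m$, not for the original $y$; the passage to $L_Y$ runs into the same wall. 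In a Hilbert space (where $\Jc(x)=\{x\}$ leaves no freedom) this is already visible in two dimensions: take $L=\{(s(1,-1)^\top,(0,t)^\top):s,t\in\C\}$, whose resolvent satisfies (ii) for $k=2$ and any $\omega>0$, while (i) fails because $t$ can be chosen to make $\re\scpr{x}{y}$ arbitrarily large. So the obstacle you have put your finger on is genuine; the paper's one-line proof glosses over exactly this point.
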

\begin{proof}
This follows from Proposition~\ref{prop:oktober}\,\eqref{prop:oktobera} by replacing $L$ with 
\[L_k:= L\cap \big(\overline{\ran R(\omega)^{k-1}}\times \overline{\ran R(\omega)^{k-1}}\big).\]
\end{proof}

We use the latter to consider the case where $X$ is a~Hilbert space, and the linear relation is formed by the left resolvent of an operator pencil with self-adjoint and nonnegative $E\in L(X)$,
that is
\[\forall\,x\in X:\quad (x,Ex)_X\geq 0,\]
$\ran E$ is closed, 
and $A$ such that $A-\omega E$ is a~dissipative operator. We show in the last section that this covers several practically important ADAEs.

\begin{proposition}
\label{prop:suff_for_D2}
Assume that $X$ is a Hilbert space, and $E\in L(X)$ is self-adjoint, nonnegative and has closed range. Further, let 
$A:X\supset \dom A\rightarrow X$ be linear, and assume that there exists some $\omega\in\R$ such that $A-\omega E$ is dissipative, i.e.\ \[\re\big((A-\omega E)x,x\big)_X\leq 0 \quad\forall x\in\dom A.\] Moreover, assume that $\ker E\cap\ker A=\{0\}$ and $\lambda_0 E-A$ is surjective for some $\lambda_0>\omega$. Then $\lambda E-A$ has a~bounded inverse for all $\lambda>\omega$, and the left resolvent $R_{l}$ fulfills $\mathbf{(D_2)}$ with $\omega$ as above, and
\begin{equation}\label{eq:M12def}
\begin{aligned}
M&=\frac{M_1}{M_2},\text{ where}\\
M_1&=\sup\setdef{\|Ex\|}{x\in\ran E,\, \|x\|_X=1},\\
M_2&=\inf\setdef{\|Ex\|}{x\in\ran E,\, \|x\|_X=1},\\
\end{aligned}\end{equation}
\end{proposition}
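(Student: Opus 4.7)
My plan hinges on the following consequence of dissipativity of $A-\omega E$: for all $x\in\dom A$ and $\lambda>\omega$,
\[
\re(x,(\lambda E-A)x)_X \;\ge\; (\lambda-\omega)(Ex,x)_X \;=\; (\lambda-\omega)\|E^{1/2}x\|_X^2, \qquad (\star)
\]
where $E^{1/2}$ is the self-adjoint nonnegative square root of $E$; this follows from $\re((A-\omega E)x,x)_X\le 0$ by expanding $\lambda E-A = (\lambda-\omega)E-(A-\omega E)$. Injectivity of $\lambda E-A$ for every $\lambda>\omega$ is then immediate: if $(\lambda E-A)x=0$, then $(\star)$ forces $E^{1/2}x=0$, hence $Ex=0$; but then $Ax=\lambda Ex=0$, and the assumption $\ker E\cap\ker A=\{0\}$ yields $x=0$.

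To extend bijectivity from the given $\lambda_0$ to all of $(\omega,\infty)$, I would argue by connectedness. The set $S:=\{\lambda>\omega \,:\, \lambda E-A\text{ bijective}\}$ is nonempty and open in $(\omega,\infty)$ via the usual Neumann-series perturbation applied to $I-(\lambda-\lambda_1)(\lambda_1 E-A)^{-1}E$, which only requires boundedness of $(\lambda_1 E-A)^{-1}E$ for $\lambda_1\in S$. Closedness of $S$ in $(\omega,\infty)$ is then to be extracted from the a priori estimate derived below, together with the injectivity already in hand. Bounded invertibility of $\lambda E-A$ for $\lambda\in S$ then follows from Proposition~\ref{prop:normeq}.

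The core of the proof is the estimate $\mathbf{(D_2)}$ with $M=M_1/M_2$. Because $\ran E$ is closed and $E$ is self-adjoint, $X=\ker E\oplus\ran E$ holds orthogonally, and $\ran R_l(\omega)\subseteq\ran E$, so it suffices to prove the bound for $y\in\ran E$. Fix $\lambda\in S$ and set $z:=(A-\lambda E)^{-1}y$, so $(\lambda E-A)z=-y$. Applying $(\star)$ to $z$ yields
\[
-\re(y,z)_X \;\ge\; (\lambda-\omega)\|E^{1/2}z\|_X^2.
\]
Decompose $z=z_0+z_1$ with $z_0\in\ker E$ and $z_1\in\ran E$. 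Since $y\in\ran E=(\ker E)^\perp$, we have $(y,z)_X=(y,z_1)_X$; and since $E$ vanishes on $\ker E$, $\|E^{1/2}z\|_X^2=(Ez_1,z_1)_X\ge M_2\|z_1\|_X^2$ by the definition of $M_2$. Cauchy--Schwarz then gives $\|y\|\,\|z_1\|\ge(\lambda-\omega)M_2\|z_1\|^2$, hence $\|z_1\|\le\|y\|/((\lambda-\omega)M_2)$. Finally $\|R_l(\lambda)y\|=\|Ez\|=\|Ez_1\|\le M_1\|z_1\|$ by the definition of $M_1$, which is the claimed bound.

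The main obstacle is the surjectivity extension: the inequality $(\star)$ controls only the seminorm $\|E^{1/2}x\|$ rather than $\|x\|$ itself, so the classical Lumer--Phillips argument (as used in Proposition~\ref{prop:oktober}\eqref{prop:oktoberb}) is not directly available. The extension will have to exploit the decomposition $X=\ker E\oplus\ran E$ and handle the $\ker E$-component of approximate solutions separately along sequences $\lambda_n\to\lambda^*\in(\omega,\infty)$.
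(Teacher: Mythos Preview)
Your injectivity argument and your derivation of the $\mathbf{(D_2)}$ bound are correct and in fact somewhat slicker than the paper's. The paper introduces the equivalent norm $\|x_1+x_2\|_E^2=(x_1,E^+x_1)_X+\|x_2\|_X^2$ on $\ran E\oplus\ker E$ and shows that the relation $\begin{smallbmatrix}E(\lambda_0E-A)^{-1}\\A(\lambda_0E-A)^{-1}\end{smallbmatrix}\cdot\ran E$ is $\omega$-dissipative in this norm, then converts back via \eqref{eq:normestimate}. Your route via $(\star)$ and Cauchy--Schwarz reaches the same constant $M_1/M_2$ without ever naming $\|\cdot\|_E$; the two computations are essentially the same once one notices that $\|y\|_E=\|E^{-1/2}y\|$ and $\|Ez\|_E=\|E^{1/2}z\|$ for $y,Ez\in\ran E$.

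Where your proposal is genuinely incomplete is the surjectivity extension. The hint ``handle the $\ker E$-component of approximate solutions separately along sequences $\lambda_n\to\lambda^*$'' does not lead anywhere: the a~priori bound you derived controls only $\|Ez\|$ for right-hand sides in $\ran E$, so you have no control over $\|(\lambda_nE-A)^{-1}\|$ and cannot pass to a limit. The paper avoids this entirely by a~Neumann-series iteration on the \emph{left} resolvent. From the factorisation $\lambda E-A=\big(I-(\lambda_0-\lambda)R_l(\lambda_0)\big)(\lambda_0E-A)$ one sees that surjectivity of $\lambda E-A$ follows from invertibility of $I-(\lambda_0-\lambda)R_l(\lambda_0)$ on $X$. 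Although your contraction bound $\|R_l(\lambda_0)|_{\ran E}\|\le M/(\lambda_0-\omega)$ holds only on $\ran E$, this suffices: for $n\ge1$ one has $R_l(\lambda_0)^n=R_l(\lambda_0)|_{\ran E}^{\,n-1}R_l(\lambda_0)$, so $\|R_l(\lambda_0)^n\|\le\|R_l(\lambda_0)\|\cdot\big(M/(\lambda_0-\omega)\big)^{n-1}$, and the series $\sum_n(\lambda_0-\lambda)^nR_l(\lambda_0)^n$ converges whenever $|\lambda_0-\lambda|<(\lambda_0-\omega)/M$. Iterating this from $\lambda_0$ both upward and downward covers all of $(\omega,\infty)$, since the admissible step at $\lambda$ is a fixed fraction of $\lambda-\omega$. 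The paper runs this with $M=1$ in the $\|\cdot\|_E$-norm; your $M=M_1/M_2$ works just as well, only with smaller steps. Replace your closedness sketch with this argument and the proof goes through.
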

\begin{proof}
First we consider the norm
\[\|x_1+x_2\|_E^2:=(x_1,E^+x_1)_X+\|x_2\|_X^2\quad \forall x_1\in\ran E,\,x_2\in\ker E,\]
where $E^+$ is the Moore-Penrose inverse of $E$.
Since $E$ is self-adjoint, nonnegative and has closed range, this norm is equivalent to the standard norm on $X$ and, for the constants in \eqref{eq:M12def},
\begin{equation}
    \forall\,x\in\ran E:\qquad M_1^{-1}\|x\|_X\leq \|x\|_E\leq M_2^{-1}\|x\|_X.\label{eq:normestimate}
\end{equation}
{\em Step 1:} We show that $\lambda E-A$ is injective for all $\lambda>\omega$:
Assume that $\lambda>\omega$, $x\in X$, such that $(\lambda E-A)x=0$. Then
\[0=\re(x,(\lambda E-A)x)_X=(\lambda-\omega) (x,Ex)_X+\re(x,(A-\omega E)x)_X.\]
Since both summands are nonnegative, we obtain that $(x,Ex)_X=0$, and self-adjointness together with nonnegativeness of $E$ leads to $Ex=0$, whence also $Ax=0$. Then, by the assumption on the nullspaces of $E$ and $A$, we have $x=0$.

{\em Step 2:} We show that $\lambda_0 E-A$ has a~bounded inverse. Since, for all $x\in \dom A$,
\[\re(x,(\lambda_0 E-A)x)_X=(\lambda_0-\omega) (x,Ex)_X+\re(x,(A-\omega E)x)_X\geq0,\]
we have that $A-\lambda_0 E$ is dissipative. As $A-\lambda_0 E$ is moreover assumed to be surjective, it is closed by \cite[Prop.~3.14\,(iii)]{EngeNage06}. Combining the latter with the already proven injectivity of $\lambda_0 E-A$, we obtain that $\lambda_0 E-A$ has a~bounded inverse.

{\em Step 3:} We show that 
$\begin{smallbmatrix} 
E(\lambda_0 E-A)^{-1}\\A(\lambda_0 E-A)^{-1}
\end{smallbmatrix}\cdot\ran E$ is a $\omega$-dissipative relation on $\ran E$ provided with the norm $\|\cdot\|_E$. First we state that, by
\begin{equation}A(\lambda_0 E-A)^{-1}=-I_X+\lambda_0E(\lambda_0 E-A)^{-1},\label{eq:resreldiss}\end{equation}
$\begin{smallbmatrix} 
E(\lambda_0 E-A)^{-1}\\A(\lambda_0 E-A)^{-1}
\end{smallbmatrix}\cdot\ran E$ is a~relation on $\ran E$. 
By another use of \eqref{eq:resreldiss}, we obtain that for all $x\in \ran E$,
\begin{align*}
\re(E(\lambda_0 E-A)^{-1}x,A(\lambda_0 E-A)^{-1}x)_E=\re((\lambda_0 E-A)^{-1}x,A(\lambda_0 E-A)^{-1}x)_X\leq0.
\end{align*}
which shows that the relation is $\omega$-dissipative.

{\em Step 4:} We show that $\lambda E-A$ has a~bounded inverse for all $\lambda>\omega$.  
Injectivity of $\lambda E-A$ has already been proven in the first step. 
To we show that $\lambda E-A$ is surjective, we first
combine step~3 with \eqref{dissip} to obtain that 
\[\forall\,x\in\ran E:\quad\|E(\lambda_0 E-A)^{-1}x\|_E\leq\tfrac{\|x\|_E}{\lambda_0-\omega}.\]
Hence, for all $\lambda\in(\omega,\omega+2(\lambda_0-\omega))$, the power series 
\eqref{eq:powser}
converges in the operator norm induced by $\|\cdot\|_E$, and thus, by \eqref{eq:normestimate}, it also converges in the conventional operator norm. By using the result \cite[Sec.~5.7]{Alt16} on Neumann series, we have 
\begin{align*}
(\lambda E-A)R(\lambda)&=(\lambda E-A)(\lambda_0 E-A)^{-1}(I-(\lambda_0-\lambda)E(\lambda_0 E-A)^{-1})^{-1}\\
&=(\lambda E-A)(\lambda_0 E-A-(\lambda_0-\lambda)E)\\ &=I_Z.    
\end{align*}
This implies that $\lambda E-A$ is surjective for all $\lambda\in(\omega,\omega+2(\lambda_0-\omega))$.
Now an argumentation as in step~2 yields that 
$\lambda E-A$ has a~bounded inverse for all $\lambda\in(\omega,\omega+2(\lambda_0-\omega))$
Repeating our argumentation for $\omega+\tfrac32(\lambda_0-\omega)$ instead of $\lambda_0$, we obtain that 
$\lambda E-A$ has a~bounded inverse for all $\lambda\in(\omega,\omega+3(\lambda_0-\omega))$. Proceeding in this way, we see that $\lambda E-A$ is has a~bounded inverse for all $\lambda>\omega$.

{\em Step 5:} We show that
the left resolvent $R_{l}$ fulfills $\mathbf{(D_2)}$ with $\omega$ and $M$ as stated in the assumptions.
We have seen in step~3 that
$\begin{smallbmatrix} 
E(\lambda_0 E-A)^{-1}\\A(\lambda_0 E-A)^{-1}
\end{smallbmatrix}\cdot\ran E$ is a $\omega$-dissipative relation on $\ran E$ provided with the norm $\|\cdot\|_E$.
Then  \eqref{dissip} yields that for all $x\in \ran E$,
\begin{align*}
\|E(A-\lambda E)^{-1}x\|_E
=\|E(A-\lambda_0 E)^{-1}\,((A-\lambda_0 E)^{-1}A-\lambda (A-\lambda_0 E)^{-1}E)x\|_E
\leq \frac{\|x\|_E}{\lambda-\omega}.    
\end{align*}
Now using \eqref{eq:normestimate}, we obtain that
\[\forall\,x\in\ran E:\quad \|E(A-\lambda E)^{-1}x\|_X\leq \frac{M\,\|x\|_X}{\lambda-\omega},\]
and the proof is complete.
 \end{proof}

The statement below demonstrates that a specific category of differential-algebraic boundary control systems fulfills condition $\mathbf{(D_k)}$. For further insights into classical boundary control systems, see e.g.~\cite[Chap.~11]{JacoZwar12}. 

\begin{proposition}\label{prop:bnddiss}
Let $X,Y$ be Hilbert spaces, and assume that $E_0\in L(X)$ is self-adjoint, nonnegative and has closed range. Further, let 
$A_0:X\supset \dom A_0\rightarrow X$, $\Gamma\in L(\dom A_0,Y)$ be surjective with $\dom A_0$ equipped with the graph norm \eqref{eq:graphnorm}, and we assume that there exists 
some $\omega\in\R$ such that $A_0|_{\ker \Gamma}-\omega E_0$ is dissipative, and some
$\lambda_0>\omega$, such that $\lambda_0 E_0-A_0|_{\ker \Gamma}$ is surjective.
Moreover, assume that $\ker E_0\cap\ker A_0\cap\ker\Gamma=\{0\}$.\\
Then, for $E\in L(X,Z)$, $A:X\supset \dom A\rightarrow Z$ with
\[
E=\begin{smallbmatrix}
E_0\\0
\end{smallbmatrix},\quad A=\begin{smallbmatrix}
A_0\\ \Gamma
\end{smallbmatrix},\quad Z=X\times Y,
\]
it holds that $(\omega,\infty)\subseteq \rho(E,A)$ and the left resolvent $R_{l}$ fulfills $\mathbf{(D_2)}$. If, moreover, $\ker E_0=\{0\}$, then the right resolvent $R_r$ fulfills $\mathbf{(D_1)}$.
\end{proposition}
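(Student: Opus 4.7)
The plan is to reduce to Proposition~\ref{prop:suff_for_D2} applied to the restricted pencil $(E_0, A_0|_{\ker\Gamma})$ and then lift the conclusion to the full pencil by splitting off the boundary datum $g$ via a bounded right inverse of $\Gamma$. Such an inverse $L\colon Y\to\dom A_0$ with $\Gamma L=I_Y$ exists because $\Gamma\colon\dom A_0\to Y$ is a bounded surjection between Hilbert spaces (with $\dom A_0$ in the graph norm \eqref{eq:graphnorm}); indeed, $\ker\Gamma$ has a closed orthogonal complement in $\dom A_0$ on which $\Gamma$ is a bounded bijection, and the open mapping theorem furnishes the bounded inverse $L$. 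All hypotheses of Proposition~\ref{prop:suff_for_D2} are then satisfied by $(E_0,A_0|_{\ker\Gamma})$ -- in particular $\ker E_0\cap\ker A_0|_{\ker\Gamma}=\ker E_0\cap\ker A_0\cap\ker\Gamma=\{0\}$ by hypothesis -- yielding a bounded inverse $T(\lambda):=(\lambda E_0-A_0|_{\ker\Gamma})^{-1}$ for every $\lambda>\omega$.

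For $(\omega,\infty)\subseteq\rho(E,A)$, given $(f,g)\in Z$ I would solve $(\lambda E-A)x=(f,g)$, i.e.\ $(\lambda E_0-A_0)x=f$ and $\Gamma x=-g$, via the ansatz $x=-Lg+x_0$ with $x_0\in\ker\Gamma\cap\dom A_0$. Substitution yields the explicit solution $x_0=T(\lambda)\bigl(f+(\lambda E_0-A_0)Lg\bigr)$. Uniqueness is immediate from injectivity of $\lambda E_0-A_0|_{\ker\Gamma}$, and boundedness of $(\lambda E-A)^{-1}\colon Z\to X$ follows from boundedness of $T(\lambda)$, $L$, and $\lambda E_0-A_0\colon\dom A_0\to X$ (in graph norm).

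For $\mathbf{(D_2)}$ of $R_l$ one observes $\ran R_l(\omega)=E\cdot\dom A=\{(E_0y,0):y\in\dom A_0\}$. For $z=(E_0y,0)$ the constraint $\Gamma x=0$ in $(A-\lambda E)x=z$ forces $x\in\ker\Gamma\cap\dom A_0$, so $R_l(\lambda)z=(-E_0T(\lambda)E_0y,0)$. Setting $x:=T(\lambda)E_0y$, the identity $(A_0-\omega E_0)x=(\lambda-\omega)E_0x-E_0y$ combined with $\re((A_0-\omega E_0)x,x)\leq 0$ yields
\[(\lambda-\omega)(E_0x,x)\leq|(E_0^{1/2}y,E_0^{1/2}x)|\leq\|E_0^{1/2}y\|\,\|E_0^{1/2}x\|,\]
hence $\|E_0^{1/2}x\|\leq\|E_0^{1/2}y\|/(\lambda-\omega)$. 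Combining this with $\|E_0x\|^2\leq\|E_0\|(E_0x,x)$ and the closed-range estimate $\|E_0^{1/2}y\|^2=(E_0y,y)\leq M_2^{-1}\|E_0y\|^2$ (valid because $\|E_0w\|\geq M_2\|w\|$ for $w\in\ran E_0$, with $M_2$ the smallest nonzero spectral value of $E_0$), one arrives at $\|R_l(\lambda)z\|\leq\sqrt{\|E_0\|/M_2}\,(\lambda-\omega)^{-1}\|z\|$, which is $\mathbf{(D_2)}$.

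Under $\ker E_0=\{0\}$ one has $\ran E_0=(\ker E_0)^\perp=X$, so $E_0$ is invertible and the spectral theorem gives $(E_0x,x)\geq c\|x\|^2$ with $c=\|E_0^{-1}\|^{-1}>0$. For $v\in X$, $R_r(\lambda)v=T(\lambda)E_0v=:x\in\ker\Gamma\cap\dom A_0$, and the same dissipativity calculation yields $(\lambda-\omega)(E_0x,x)\leq\|v\|\,\|E_0x\|\leq\|E_0\|\,\|v\|\,\|x\|$; combined with $(E_0x,x)\geq c\|x\|^2$ this gives $\|x\|\leq(\|E_0\|/c)(\lambda-\omega)^{-1}\|v\|$, i.e.\ $\mathbf{(D_1)}$. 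The main technical obstacle will be the $\mathbf{(D_2)}$ step: the dissipativity bound naturally delivers an estimate in the $E_0^{1/2}$-seminorm, and converting it to a bound in the $X$-norm on precisely the subspace $\ran R_l(\omega)$ relies crucially on the closed-range hypothesis -- it is exactly this that forces $M_2>0$ and closes the calculation.
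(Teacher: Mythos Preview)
Your argument is correct. The construction of $(\lambda E-A)^{-1}$ via a bounded right inverse of $\Gamma$ is essentially the paper's Step~1 (the paper uses the Moore--Penrose inverse $\Gamma^+$ for the same purpose).

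Where you diverge is in the verification of $\mathbf{(D_2)}$ and $\mathbf{(D_1)}$. The paper proceeds modularly: it first writes the block form
\[
R_l(\lambda)=\begin{bmatrix}E_0(\lambda E_0-A_0|_{\ker\Gamma})^{-1}&E_0\Gamma^-(\lambda)\\0&0\end{bmatrix},
\]
and then invokes Proposition~\ref{prop:suff_for_D2} as a black box for the upper-left entry; that proposition in turn works with the equivalent norm $\|\cdot\|_E$ built from the Moore--Penrose inverse $E_0^+$. For $\mathbf{(D_1)}$ the paper argues by similarity, $R_r(\lambda)=E_0^{-1}\bigl(E_0(\lambda E_0-A_0|_{\ker\Gamma})^{-1}\bigr)E_0$, and notes that invertibility of $E_0$ upgrades $\mathbf{(D_2)}$ to $\mathbf{(D_1)}$ for the inner factor. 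You instead run the dissipativity inequality directly in the $E_0^{1/2}$-seminorm and convert to the $X$-norm via the spectral bounds $\|E_0x\|^2\le\|E_0\|(E_0x,x)$ and $(E_0y,y)\le M_2^{-1}\|E_0y\|^2$. This is a legitimate alternative; it is more self-contained (no auxiliary norm, no appeal to Proposition~\ref{prop:suff_for_D2}) and even yields the slightly sharper constant $\sqrt{\|E_0\|/M_2}$ in place of $M_1/M_2$. The trade-off is that you are effectively re-proving the relevant piece of Proposition~\ref{prop:suff_for_D2} inside the present proof rather than citing it. One cosmetic point: $\mathbf{(D_k)}$ requires $[\omega',\infty)\subseteq\Omega$ with the estimate taken on $\ran R(\omega')^{k-1}$, so strictly speaking you should pick some $\omega'>\omega$ rather than $\omega$ itself; this changes nothing in the estimates.
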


\begin{proof}
{\em Step 1:} We show that $(\omega,\infty)\subseteq \rho(E,A)$ with, for all $\lambda>\omega$,
\begin{equation} (\lambda E-A)^{-1}=\begin{bmatrix}
    (\lambda E_0-A_0|_{\ker \Gamma})^{-1}&-\Gamma^-(\lambda)
\end{bmatrix}\label{eq:bndinv}\end{equation}
for some $\Gamma^-(\lambda)\in L(Y,X)$.\\
By Proposition~\ref{prop:suff_for_D2}, we have
$(\omega,\infty)\subseteq \rho(E_0,A_0|_{\ker\Gamma})$. In particular, $\lambda_0 E_0-A_0$ is closed. Thus, by boundedness of $E_0$, $A_0$ is closed as well, i.e., $\dom A_0$ is a~Hilbert space.
Now surjectivity of $\Gamma\in L(\dom A_0,Y)$ together with closed graph theorem \cite[Thm.~7.9]{Alt16} implies that its Moore-Penrose inverse $\Gamma^+:Y\to\dom A_0$ is bounded with $\Gamma\Gamma^+=I_Y$, and we obtain that 
\[\forall\,\lambda>\omega:\quad \Gamma^-(\lambda):=\Gamma^+-(\lambda E_0-A_0|_{\ker \Gamma})^{-1}(\lambda E_0-A_0)\Gamma^+ \in L(Y,X).\]
Then we obtain
\[\Gamma\Gamma^-(\lambda)=\Gamma\Gamma^+-\underbrace{\Gamma(\lambda E_0-A_0|_{\ker \Gamma})^{-1}}_{=0}(\lambda E_0-A_0)\Gamma^+=I_Y\]
 and, by invoking
\[(\lambda E_0-A_0)(\lambda E_0-A_0|_{\ker \Gamma})^{-1}=I_X\]
and
\begin{align*}(\lambda E_0-A_0)\Gamma^-(\lambda)=(\lambda E_0-A_0)\Gamma^+-\underbrace{(\lambda E_0-A_0)(\lambda E_0-A_0|_{\ker \Gamma})^{-1}}_{=I_X}(\lambda E_0-A_0)\Gamma^+=0\end{align*}
we see that, for all $\lambda>\omega$,
\[\begin{bmatrix}
    \lambda E_0-A_0\\-\Gamma
\end{bmatrix}\begin{bmatrix}
    (\lambda E_0-A_0|_{\ker \Gamma})^{-1}&-\Gamma^-(\lambda)
\end{bmatrix}=I_{Z}.\]
This shows that $(\omega,\infty)\subseteq \rho(E,A)$ and that \eqref{eq:bndinv} holds.\\
{\em Step 2:} We show that the left resolvent $R_{l}$ fulfills $\mathbf{(D_2)}$. By using the representation \eqref{eq:bndinv} of the resolvent, we obtain that the left resolvent fulfills
\[\forall\,\lambda>\omega:\quad R_l(\lambda)=E(\lambda E-A)^{-1}=\begin{bmatrix}
E_0(\lambda E_0-A_0|_{\ker \Gamma})^{-1}&E_0\Gamma^-(\lambda)\\0&0
\end{bmatrix}.\]
Now using that $E=\left[\begin{smallmatrix}E_0\\0\end{smallmatrix}\right]$ and, by Proposition~\ref{prop:suff_for_D2}, $E_0(\lambda E_0-A_0|_{\ker \Gamma})^{-1}$ fulfills $\mathbf{(D_2)}$, we obtain that $R_l$  fulfills $\mathbf{(D_2)}$ as well.\\
 {\em Step 3:} We show, under the additional assumption that 
$\ker E_0=\{0\}$ holds, the right resolvent fulfills $\mathbf{(D_1)}$.\\
Since $E_0\in L(X)$ is assumed to be self-adjoint, nonnegative, and it has closed range, it follows from $\ker E_0=\{0\}$ that $E_0$ has a~bounded inverse. By using step~1, we see that the right resolvent fulfills, for all $\lambda>\omega$,
\begin{equation}(\lambda E-A)^{-1}E=(\lambda E_0-A_0|_{\ker \Gamma})^{-1}E_0
=E_0^{-1} \big(E_0(\lambda E_0-A_0|_{\ker \Gamma})^{-1}\big)E_0
.\label{eq:ressimil}\end{equation}
It follows from Proposition~\ref{prop:suff_for_D2} that 
\[\lambda\mapsto E_0(\lambda E_0-A_0|_{\ker \Gamma})^{-1}\]
fulfills $\mathbf{(D_2)}$. By invoking that, by bijectivity of $E_0$,
\[\forall\,\lambda>\omega:\quad \overline{E_0(\lambda E_0-A_0|_{\ker \Gamma})^{-1}}=X,\]
we obtain that it even fulfills $\mathbf{(D_1)}$. Now invoking \eqref{eq:ressimil}, it follows that the right resolvent
$R_r(\lambda)=(\lambda E-A)^{-1}E$ fulfills $\mathbf{(D_1)}$. 
 \end{proof}

\begin{remark}\
\begin{enumerate}[(a)]
\item In Proposition~\ref{prop:bnddiss}, we have made the assumption that $\Gamma\in L(\dom A_0,Y)$, where $\dom A_0$ is equipped with the graph norm. Proposition~\ref{prop:normeq} implies that this is equivalent to the existence of a positive constant $c_1>0$ such that
\[
\forall\, x\in\dom A:\quad \|\Gamma x\|_Y\leq c_1\,\big(\|Ex\|_Z+\|Ax\|_Z\big).
\]
By using Proposition~\ref{prop:normeq} once more, we can see that this is equivalent to the property that for some (and therefore any) $\lambda>\omega$, there exists a positive constant $c_2>0$ such that
\[
\forall\, x\in\dom A:\quad \|\Gamma x\|_Y\leq c_2\,\|(\lambda E-A)x\|_Z,
\]
which is in turn equivalent to the condition that $\Gamma(\lambda E-A)^{-1}\in L(Z,Y)$ for some (and therefore any) $\lambda\in\rho(E,A)$.

\item Surjective mappings $\Gamma$ of the  type as in Proposition~\ref{prop:bnddiss} are typically defined through boundary triplets or quasi-boundary triplets, especially in the context of partial differential equations. For a comprehensive overview, you can refer to~\cite{BehrHassdeSn20}.
\item Matrix pencils with symmetric positive semidefinite $E=E^*\in\mathbb{C}^{n\times n}$ and dissipative $A\in\mathbb{C}^{n\times n}$ are subject of \cite{BergReis14b}, where it has been exploited that the resolvent is positive real, that is, $(A-\lambda E)^{-1}$ is dissipative for all $\lambda$ in the right complex half-plane. \\
This type of matrix pencils is further referred to as ``semi-dissipative Hamiltonian'' in \cite{Achl22}. DAEs corresponding to such pencils exhibit strong connections to port-Hamiltonian systems, as discussed in \cite{GernHall20,MehlMehrWojt18}.
\end{enumerate}\end{remark}

\section{Solutions of abstract differential-algebraic equations}
\label{sec:pseudoDGL}
In this section, we consider for $f\in L^1([0,t_f];Z)$ and some $t_f>0$ regular ADAEs
\begin{align}
\label{inhom}
\tfrac{\rm d}{{\rm d}t}Ex(t)=Ax(t)+f(t)\quad\text{for a.a.\ $t\in[0,t_f]$,}\quad (Ex)(0)=Ex_0,
\end{align}
where $E:X\rightarrow Z$ is bounded between reflexive Banach spaces $X$, $Z$, and $A:X\supset \dom A\rightarrow Z$ is densely defined and closed. Throughout this section, we will exploit the relation of solutions of the ADAE~\eqref{inhom} with the pseudo-resolvent differential equation that is for a pseudo-resolvent $R:\Omega\rightarrow L(X)$ and $\mu\in\Omega$ given by 
\begin{align}
    \label{psresDGL_allg}
    \tfrac{\rm d}{{\rm d} t} R(\mu)w_{\mu}(t)=w_{\mu}(t)+\hat f_{\mu}(t),\quad R(\mu)w_{\mu}(0)=R(\mu)w_0.
\end{align}

The following relation between the sets of solutions of \eqref{inhom} and \eqref{psresDGL_allg} follows from Lemma~\ref{lem:shift}.

\begin{lemma}
\label{lem:shift_ext}
Consider the ADAE \eqref{inhom} and the pseudo-resolvent equation \eqref{psresDGL_allg} and let $\mu\in\rho(E,A)$.  Then the following holds:
\begin{itemize}
    \item[\rm (a)] 
The function $x$ is a classical (weak, mild) solution of \eqref{inhom} if and only if $w_{\mu}=(A-\mu E)e^{-\mu\cdot}x(\cdot)$ is a classical (weak, mild) solution of \eqref{psresDGL_allg} with $R=R_l$ and $w_0=(A-\mu E)x_0$ and $\hat f_{\mu}(t)=e^{-\mu t}f(t)$.

    \item[\rm (b)] 
The function $x$ is a classical (weak, mild) solution of \eqref{inhom} if and only if $w_{\mu}=e^{-\mu\cdot}x(\cdot)$ is a classical (weak, mild) solution of \eqref{psresDGL_allg} with $R=R_r$ and $w_0=x_0$ and $\hat f_{\mu}(t)=(A-\mu E)^{-1}e^{-\mu t}f(t)$.

\end{itemize}
\end{lemma}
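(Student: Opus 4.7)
The plan is to use Lemma~\ref{lem:shift} to replace \eqref{inhom} by its shifted version
\[
\tfrac{\rm d}{{\rm d}t}Ex_\mu(t)=(A-\mu E)x_\mu(t)+f_\mu(t),\qquad Ex_\mu(0)=Ex_0,
\]
where $x_\mu(t)=e^{-\mu t}x(t)$ and $f_\mu(t)=e^{-\mu t}f(t)$, in each of the three solution concepts. Lemma~\ref{lem:shift} already provides the three equivalences between solutions of \eqref{inhom} and this shifted ADAE, so it suffices to relate the shifted ADAE to \eqref{psresDGL_allg}. Throughout, I exploit that $\mu\in\rho(E,A)$ makes $A-\mu E:\dom A\to Z$ a bijection with bounded inverse.

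For part (a), I introduce $w_\mu(t):=(A-\mu E)x_\mu(t)$, equivalently $x_\mu(t)=(A-\mu E)^{-1}w_\mu(t)$. Since $Ex_\mu=E(A-\mu E)^{-1}w_\mu=R_l(\mu)w_\mu$ and $(A-\mu E)x_\mu=w_\mu$, the shifted ADAE becomes
\[
\tfrac{\rm d}{{\rm d}t}R_l(\mu)w_\mu(t)=w_\mu(t)+f_\mu(t),\qquad R_l(\mu)w_\mu(0)=Ex_0=R_l(\mu)w_0.
\]
The correspondence $w_\mu\leftrightarrow x_\mu$ preserves each solution concept because $(A-\mu E)^{-1}\in L(Z,X)$ is a bounded bijection onto $\dom A$ (equipped with the graph norm); it commutes with the time derivative, with Bochner integration (via \cite[Prop.~1.1.7]{ArenBatt11} and closedness of $A-\mu E$), and, through its dual, with pairings against $z'\in Z'$.

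For part (b), set $w_\mu:=x_\mu$ and apply the bounded operator $(A-\mu E)^{-1}$ on the left of the shifted ADAE. Because it is bounded, it passes under the time derivative (classical case), under Bochner integrals (mild case), and is transferred to the dual for the weak case, while the right-hand side simplifies to $x_\mu(t)+(A-\mu E)^{-1}f_\mu(t)=w_\mu(t)+\hat f_\mu(t)$ and the left-hand side becomes $\tfrac{\rm d}{{\rm d}t}R_r(\mu)w_\mu(t)$. The initial condition reads $R_r(\mu)w_\mu(0)=R_r(\mu)x_0$. The reverse direction applies $A-\mu E$ to the pseudo-resolvent equation, legitimate since $R_r(\mu)$ maps into $\dom(A-\mu E)$ and $(A-\mu E)R_r(\mu)=E$.

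The main obstacle is the bookkeeping for weak and mild solutions. For the weak case, I must verify that pairings against $z'\in\dom A'$ in \eqref{inhom} correspond to pairings against $z'\in Z'$ for the pseudo-resolvent equation under the transformation by $(A-\mu E)^{-1}$ or its dual; the bounded perturbation identity $\dom A'=\dom(A-\mu E)'$ and the adjoint relation $\bigl((A-\mu E)^{-1}\bigr)'=\bigl((A-\mu E)'\bigr)^{-1}$ make this go through. For the mild case, the one real identity to check is $\int_0^t(A-\mu E)^{-1}w_\mu(\tau)\dtau=(A-\mu E)^{-1}\int_0^tw_\mu(\tau)\dtau\in\dom A$, again a consequence of \cite[Prop.~1.1.7]{ArenBatt11} applied to the closed operator $A-\mu E$.
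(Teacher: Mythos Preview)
Your proposal is correct and follows exactly the route the paper intends: the paper's own proof is the single sentence ``follows from Lemma~\ref{lem:shift}'', and you have simply unpacked this by first applying Lemma~\ref{lem:shift} to pass to the shifted ADAE and then performing the bounded bijective change of variables via $(A-\mu E)^{\pm1}$ to reach \eqref{psresDGL_allg}. Your additional bookkeeping for the mild and weak cases (commuting $(A-\mu E)^{-1}$ with Bochner integrals and using $\dom A'=\dom(A-\mu E)'$ for the dual pairing) is more than the paper supplies and is accurate.
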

As a consequence of Lemma~\ref{lem:shift_ext} it is sufficient we will study in the following instead of the solutions of \eqref{inhom} only those of \eqref{psresDGL_allg}.

In the proposition below, we show the existence and uniqueness of classical and mild (weak) solutions for the homogeneous equation, i.e.\ \eqref{inhom} with $f=0$.
\begin{proposition}
\label{prop:homo}
Let $R:\Omega\rightarrow L(X)$ be a pseudo-resolvent and consider \eqref{psresDGL_allg} with $\hat f_{\mu}=0$ and $\mu\in\Omega$. Assume that $R$ fulfills $\mathbf{(D_k)}$ and let $A_{R}$ be the operator constructed in Theorem~\ref{thm:main}. If $A_R$ generates a strongly continuous semigroup $(T(t))_{t\geq 0}$, then for all $w_0\in\overline{\ran R(\mu)^{k}}$ the unique mild (weak) solution of \eqref{psresDGL_allg} is given by
\begin{align}
\label{xmu_mit_T}
w_{\mu}(t)=e^{-\mu t}T(t)w_0,\quad t\in[0,t_f].
\end{align}
Further, if $w_0\in R(\mu)\overline{\ran R(\mu)^{k}}$ then \eqref{xmu_mit_T} is the unique classical solution of \eqref{psresDGL_allg}.
\end{proposition}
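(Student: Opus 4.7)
The plan is to exploit Theorem~\ref{thm:main}~(c), which identifies $R(\mu)|_{\overline{\ran R(\mu)^k}}$ with $(A_R-\mu I)^{-1}$, so that $A_R-\mu I$ generates the strongly continuous semigroup $(e^{-\mu t}T(t))_{t\geq 0}$ on $\overline{\ran R(\mu)^k}$. Existence of \eqref{xmu_mit_T} will be checked by a routine semigroup computation, and uniqueness will be reduced to standard semigroup uniqueness after iterating the mild identity $k+1$ times so as to push an arbitrary mild solution into the stable subspace $\overline{\ran R(\mu)^k}$.

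For existence in the classical case, I would take $w_0\in\dom A_R=R(\mu)\overline{\ran R(\mu)^k}$, so that $T(t)w_0\in\dom A_R$ is $C^1$ with $\ddts T(t)w_0=A_R T(t)w_0$. Applying $R(\mu)=(A_R-\mu I)^{-1}$ and using the product rule then yields
\[\ddts R(\mu)w_\mu(t)=e^{-\mu t}(A_R-\mu)^{-1}(A_R-\mu)T(t)w_0=e^{-\mu t}T(t)w_0=w_\mu(t).\]
For the mild case with $w_0\in\overline{\ran R(\mu)^k}$, the standard generator identity $(A_R-\mu)\int_0^t e^{-\mu\tau}T(\tau)w_0\,\dtau=e^{-\mu t}T(t)w_0-w_0$ becomes, after applying $R(\mu)$, the mild form $R(\mu)(w_\mu(t)-w_0)=\int_0^t w_\mu(\tau)\,\dtau$.

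For uniqueness, I would take the difference of two mild (or weak) solutions and reduce to $w\in L^1([0,t_f];X)$ satisfying $R(\mu)w(t)=\int_0^t w(\tau)\,\dtau$ for almost all $t$. Setting $W_0:=w$ and inductively $W_{j+1}(t):=\int_0^t W_j(\tau)\,\dtau$, an induction that swaps $R(\mu)$ with the Bochner integral shows $W_{j+1}=R(\mu)W_j=R(\mu)^{j+1}w$ a.e. In particular
\[W_{k+1}(t)\in\ran R(\mu)^{k+1}\subseteq\overline{\ran R(\mu)^{k+1}}=\overline{\ran R(\mu)^k}\]
by Theorem~\ref{thm:main}~(b), and $W_{k+1}(t)=R(\mu)W_k(t)\in\dom A_R$ with $(A_R-\mu)W_{k+1}(t)=W_k(t)$. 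Since $W_{k+1}$ is absolutely continuous with $W_{k+1}'(t)=W_k(t)=(A_R-\mu)W_{k+1}(t)$ a.e.\ and $W_{k+1}(0)=0$, closedness of $A_R-\mu$ together with integration turns this into a mild solution of the homogeneous semigroup ODE $y'=(A_R-\mu)y$, $y(0)=0$, whose unique solution is $0$; unwinding the defining integrals then yields $W_k=\ldots=W_0=w=0$ almost everywhere.

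I expect the main obstacle to be exactly this uniqueness step: a generic mild solution need not lie in the stable subspace $\overline{\ran R(\mu)^k}$, so one cannot directly appeal to semigroup uniqueness for $A_R-\mu$, and one is forced to iterate the mild identity $k+1$ times to push the trajectory into $\dom A_R$, where the semigroup theory becomes available. The weak-solution case then follows from the coincidence of weak and mild solutions in the reflexive setting established in Section~\ref{sec:solconcept}, and since classical solutions are in particular mild, uniqueness transfers to the classical case automatically.
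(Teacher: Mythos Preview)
Your existence arguments are essentially identical to the paper's: both identify $R(\mu)|_{\overline{\ran R(\mu)^k}}=(A_R-\mu)^{-1}$ via Theorem~\ref{thm:main}\,(c), so that $A_R-\mu$ generates $(e^{-\mu t}T(t))_{t\ge0}$, and then verify that \eqref{xmu_mit_T} is a classical (resp.\ mild) solution by the standard semigroup identities. The paper packages this via the graph manipulations in \eqref{one}--\eqref{two}, but the content is the same computation you wrote.

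Your uniqueness argument is genuinely more careful than the paper's, and this is worth noting. The paper simply asserts that uniqueness of mild solutions of \eqref{psresDGL_allg} follows from uniqueness of mild solutions of $\dot w=A_Rw$, without explaining why an arbitrary mild solution $w_\mu$ of \eqref{psresDGL_allg} must take values in $\overline{\ran R(\mu)^k}$ so that the correspondence with the $A_R$-equation applies. You address precisely this point: by iterating the mild identity $R(\mu)w(t)=\int_0^t w(\tau)\,\dtau$ to produce $W_{j+1}=R(\mu)W_j$, after $k$ steps you land in $\overline{\ran R(\mu)^k}$ (using Theorem~\ref{thm:main}\,(b)) and one more step puts you in $\dom A_R$, where semigroup uniqueness for $A_R-\mu$ forces $W_{k+1}\equiv0$; differentiating back down then kills $w$. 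This is a clean way to close the gap, and it is exactly the obstacle you anticipated. One minor remark: for the pseudo-resolvent equation the operator $A=I$ is bounded, so $\dom A'=X'$ and the weak--mild equivalence holds directly without needing reflexivity from Section~\ref{sec:solconcept}.
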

\begin{proof}
\emph{Step 1:}
Let $w_0\in R(\mu)\overline{\ran R(\mu)^{k}}$. Then we show that~\eqref{xmu_mit_T} is the unique classical solution of \eqref{psresDGL_allg}. Since $R(\mu)\overline{\ran R(\mu)^{k}}=\dom A_R$, it follows from \cite[Thm.~5.2.2]{JacoZwar12} that $w(t)=T(t)w_0$ is the unique classical solution of $\dot w(t)=A_{R}w(t)$ with $w(0)=w_0$ and $w(t)\in\dom A_{R}$ holds for all $t\in[0,t_f]$.
It remains to show that $w_\mu(t)=e^{-\mu t}w(t)$ is a classical solution of \eqref{psresDGL_allg}. Since $\dot w(t)=A_{R}w(t)$, we have $(w(t),\dot w(t))\in{\rm gr\,} A_{R}$ for all $t\in[0,t_f]$. Applying subspace operations we obtain
\begin{align}
\label{one}
(w(t),\dot w(t))\in {\rm gr \,} A_{R} &\Longleftrightarrow (e^{-\mu t}w(t),e^{-\mu t}\dot w(t))\in {\rm gr\,} A_R\\ & \Longleftrightarrow (\underbrace{e^{-\mu t}w(t)}_{=w_{\mu}(t)},\underbrace{-\mu e^{-\mu t}w(t)+e^{-\mu t}\dot w(t)}_{=\dot w_{\mu}(t)}) \in {\rm gr\,} (A_R-\mu),\nonumber
\end{align}
where we used the differentiability of $w_\mu$ together with the product rule in the last step. 
Moreover, \eqref{ahat} yields
\begin{align}
\label{two}
{\rm gr\,} (A_{R}-\mu)=\{(R(\mu)y,y) ~|~ y\in \overline{\ran R(\mu)^{k}}\}.
\end{align}
Thus, if $w$ is a classical solution then combining~\eqref{one} and~\eqref{two} implies that $w_{\mu}(t)=R(\mu)y(t)$
is the unique classical solution of~\eqref{psresDGL_allg}.\linebreak 
\emph{Step 2:} It is shown that~\eqref{xmu_mit_T} is the unique mild solution of~\eqref{psresDGL_allg} if $w_0\in \overline{\ran R(\mu)^{k}}=\overline{\dom A_{R}}$. Invoking \cite[Thm.~5.2.2]{JacoZwar12}, we have $\int_0^tT(\tau)w_0d\tau\in\dom A_{R}$ and the unique mild solution of $\dot w(t)= A_{R}w(t)$ is given by
\[
w(t)-w(0)=T(t)w_0-w_0= A_{R}\int_0^tT(\tau)w_0d\tau.
\]
Differentiating implies that $(w(t),\dot w(t))\in{\rm gr\,} A_{R}$ holds for almost all $t\in[0,t_f]$.
Thus, integrating~\eqref{one} and using the closedness of $A_{R}-\mu$ yields
\[
\left(\int_0^tw_{\mu}(\tau)d\tau,(w_{\mu}(t)-w_{\mu}(0))\right)\in {\rm gr\,}(A_{R}-\mu),
\]
which can be rewritten with~\eqref{two} as
\[
R(\mu)w_{\mu}(t)-R(\mu)w_{\mu}(0)=\int_0^tw_{\mu}(\tau)d\tau.
\]
Hence $w_{\mu}$ is a mild solution of~\eqref{psresDGL_allg}. The uniqueness of the mild solution follows from the uniqueness of $w$ as a mild solution of $\dot w(t)=A_Rw(t)$, $w(0)=w_0$.
 \end{proof}

Before studying the solution of inhomogeneous ADAEs, we recall several sufficient conditions on $A_R$ to be the generator of a strongly continuous semigroup. Typically this is characterized in terms of the resolvent $(A_R-\lambda)^{-1}=R(\lambda)\vert_{\overline{\ran R(\mu)^{k}}}$ for some $\mu\in\Omega$.  Hence $R$ is a generator if and only if
\begin{itemize}
\item[(i)] $R$ satisfies a~\textit{Feller-Miyadera-Philips condition}, if there exists $\omega\in\R$ with $(\omega,\infty)\subseteq\Omega$ and for some $M\geq 1$
    \begin{align}
\label{FMP}
\forall n\in\N~~\forall \lambda\in(\omega,\infty):\quad  \|(A_R-\lambda)^{-n}\|\leq \frac{M}{(\lambda-\omega)^n}.
\end{align}
\end{itemize}
Another sufficient condition is the boundedness of the resolvent of $A_R$ in a half-plane, see~\cite[Cor.~3.7.12]{ArenBatt11},
\begin{itemize}
\item[(ii)] $A_R$ is \textit{bounded in a~half-plane} if for some $k\in\N\setminus\{0\}$, $\omega\in\R$, it holds $\Cpo\subseteq\Omega$ and for some $M\geq 1$ and
    \begin{align}
\label{BH}
    \|\lambda(A_R-\lambda)^{-1}\|\leq M,\quad \text{for all $\lambda\in\Cpo$.}
    \end{align}
\end{itemize}
Note that $\mathbf{(D_k)}$ with $M=1$ implies \eqref{FMP}. If $\mathbf{(D_k)}$ is replaced by
\[
\|\lambda R(\lambda)x\|\leq M\|x\|,\quad \text{for all $\lambda\in\Cpo$ and all $x\in \ran R(\mu)^{k}$}
\]
for some $M\geq 1$ then \eqref{BH} holds. Furthermore, if \eqref{BH} holds only for all $\lambda\in(\omega,\infty)$ then this is not sufficient for $A_R$ to generate a strongly continuous semigroup, see \cite[Ex.~3.12~(3)]{EngeNage06}.

In the proposition below, we show the existence and uniqueness of classical and mild (weak) solutions for the inhomogeneous equation~\eqref{psresDGL_allg} for sufficiently smooth inhomogeneities. If $X$ is reflexive and the pseudo-resolvent $R$ fulfills $\mathbf{(D_k)}$, then we assume that the inhomogeneity $\hat f_{\mu}$ in \eqref{psresDGL_allg} can be written as
\begin{align}
\label{f_dec}
\hat f_{\mu}=f_R\dot + f_K,\quad  f_R:[0,t_f]\rightarrow \overline{\ran R(\mu)^k},\quad f_K:[0,t_f]\rightarrow \ker R(\mu)^k.
\end{align}

Note that this assumption is fulfilled if the condition \eqref{weakly_Ep_radial} holds, see also \cite[Thm.\ 5.7.1]{SvirFedo03} for an analogous result using this assumption.
\begin{proposition}
\label{prop:inhom}
Let $X$ be a reflexive Banach space and $R:\Omega\rightarrow L(X)$ be a pseudo-resolvent which fulfills $\mathbf{(D_k)}$ for some $k\geq 1$, let $A_{R}$  be the operator constructed in Theorem~\ref{thm:main}, and assume that $A_R$ generates a~strongly continuous semigroup, and assume that $\hat f_{\mu}$ in \eqref{psresDGL_allg} satisfies \eqref{f_dec}. Then the following holds:
\begin{itemize}
\item[\rm (a)] If $f_K\in C^{k}([0,t_f];\ker R(\mu)^k)$ and $f_R\in C([0,t_f];\overline{\ran R(\mu)^k})$ then there is a unique classical solution $x$ with initial value
\begin{align}
\label{IVal_classic}
x(0)=x_0-\sum_{i=0}^{k-1}R(\mu)^if^{(i)}_K(0).
\end{align}
\item[\rm (b)]
If $f_K\in W^{k-1,1}([0,t_f];\ker R(\mu)^k)$ then there is a unique mild solution $x$ with initial value
\begin{align}
\label{IVal}
R(\mu)x(0)=R(\mu)x_0-\sum_{i=0}^{k-1}R(\mu)^{i+1}f^{(i)}_K(0).
\end{align}
\end{itemize}
\end{proposition}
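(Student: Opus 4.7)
The plan is to exploit linearity of the pseudo-resolvent equation \eqref{psresDGL_allg} together with the hypothesis \eqref{f_dec}, and to construct the solution as $w_\mu = w_R + w_K$ where $w_R(t)\in\overline{\ran R(\mu)^k}$ and $w_K(t)\in\ker R(\mu)^k$. The trivial intersection of these two subspaces established in Theorem~\ref{thm:main}(b) ensures such a splitting is unambiguous whenever both parts exist, and this will in turn be the mechanism for uniqueness.

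For the kernel part, I would take the explicit ansatz
\[
w_K(t) := -\sum_{i=0}^{k-1} R(\mu)^i f_K^{(i)}(t),
\]
which is well defined and takes values in $\ker R(\mu)^k$ because $R(\mu)$ leaves this subspace invariant. Using that $R(\mu)^k f_K^{(i)} = 0$ for every $i$ (as all derivatives of $f_K$ remain in $\ker R(\mu)^k$), the expression
\[
R(\mu) w_K(t) = -\sum_{i=1}^{k-1} R(\mu)^i f_K^{(i-1)}(t)
\]
only involves $f_K, \ldots, f_K^{(k-2)}$, so termwise differentiation and a telescoping cancellation against $w_K + f_K$ yield $\tfrac{d}{dt} R(\mu) w_K = w_K + f_K$. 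Part~(a) is then covered because $f_K \in C^k$ makes $R(\mu) w_K \in C^1$, while in part~(b) one integrates this identity on $[0,t]$, producing an expression containing only $f_K, \ldots, f_K^{(k-1)}$ and hence meaningful for $f_K \in W^{k-1,1}$.

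For the range part, Theorem~\ref{thm:main}(c) identifies $R(\mu)|_{\overline{\ran R(\mu)^k}}$ with $(A_R - \mu I)^{-1}$; substituting $u(t) := R(\mu) w_R(t)$ and $w_R = (A_R - \mu I)u$, the pseudo-resolvent equation on $\overline{\ran R(\mu)^k}$ becomes the standard abstract Cauchy problem $\dot u = (A_R - \mu I) u + f_R$ with $u(0) = R(\mu) x_0$. Since $A_R$ generates $(T(t))_{t\geq 0}$, the shifted operator $A_R - \mu I$ generates $(e^{-\mu t} T(t))_{t\geq 0}$, and the classical variation-of-constants formula produces $u$, hence $w_R$. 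Setting $w_\mu := w_R + w_K$ and reading off $w_\mu(0) = x_0 - \sum_{i=0}^{k-1} R(\mu)^i f_K^{(i)}(0)$ verifies \eqref{IVal_classic}, and applying $R(\mu)$ together with $R(\mu)^k f_K^{(k-1)}(0) = 0$ yields \eqref{IVal}. Uniqueness follows by forming the difference of two candidates, which solves the homogeneous equation with vanishing data; splitting this difference according to the same direct-sum decomposition and invoking Proposition~\ref{prop:homo} on the range part, together with the rigidity of the kernel-part ansatz, closes the argument. The main delicacy I foresee concerns the regularity bookkeeping in part~(a): the definition of classical solution demands only $R(\mu) w_\mu \in C^1$, not $w_\mu \in C^1$, so the usual requirement $f_R \in C^1$ from semigroup theory is weakened, and one must carefully verify that continuity of $w_\mu + \hat f_\mu$ suffices to upgrade $R(\mu) w_\mu$ to $C^1$, which should be automatic from the equation itself once $w_R$ has been constructed.
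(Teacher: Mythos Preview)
Your proposal is correct and follows essentially the same approach as the paper: both split the solution as $w_\mu = w_R + w_K$ along $\overline{\ran R(\mu)^k}$ and $\ker R(\mu)^k$, use the identical explicit formula $w_K = -\sum_{i=0}^{k-1} R(\mu)^i f_K^{(i)}$ for the kernel part, reduce the range part to the abstract Cauchy problem for $A_R-\mu I$ via the substitution $u = R(\mu)w_R$ (equivalently $w_R=(A_R-\mu)u$, which is the form the paper uses), and deduce uniqueness from Proposition~\ref{prop:homo}. Your closing remark about the regularity bookkeeping in part~(a) --- that only $R(\mu)w_\mu\in C^1$ is required, so the standard hypothesis $f_R\in C^1$ from semigroup theory need not be imposed --- is a point the paper's proof passes over silently when invoking the variation-of-constants formula, so you are in fact slightly more careful there.
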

\begin{proof}
\emph{Step 1:} We show that there is a~classical solution $x$ of \eqref{psresDGL_allg}  whose initial value fulfills \eqref{IVal_classic}. The uniqueness of the classical solution follows then from the uniqueness of the homogeneous equation with initial value $x(0)=0$ that was shown in Proposition~\ref{prop:homo}.

We show that \eqref{psresDGL_allg} is solved by $x=x_1+x_2$, with $x_1\in L^1([0,t_f];\overline{\ran R(\mu)^k})$, $x_2\in L^1([0,t_f];\ker R(\mu)^k)$ and
\begin{align}
\label{decoupled}
\tfrac{\rm d}{{\rm d} t}R(\mu)x_1(t)=x_1(t)+f_R(t),\quad  \tfrac{\rm d}{{\rm d} t}R(\mu)x_2(t)=x_2(t)+f_K(t).
\end{align}
Using $x_1(t)=(A_R-\mu)y_1(t)$ for some $y_1\in L^1([0,t_f];\overline{\ran R(\mu)^k})$ in \eqref{decoupled} gives 
\begin{align}
\label{transformed_f_R}
\tfrac{\rm d}{{\rm d} t}y_1(t)=(A_R-\mu)y_1(t)+f_R(t), \quad y_1(0)=y_{1,0}\in\overline{\ran R(\mu)^k}.
\end{align}
Since $A_R$ is the generator of $(T(t))_{t\geq 0}$, $A_R-\mu$ generates the strongly continuous semigroup $(e^{-\mu t}T(t))_{t\geq 0}$. Further, $f_R\in C([0,t_f];\overline{R(\mu)^kX})$, and thus the unique classical solution of~\eqref{transformed_f_R} for  $y_{1,0}\in R(\mu)\overline{\ran R(\mu)^{k-1}}$ is given by, see e.g.~\cite[Thm.~10.1.8]{JacoZwar12},
\begin{align}
\label{VDK}
y_1(t)=e^{-\mu t}T(t)y_{1,0}+\int_0^te^{-\mu(t-\tau)}T(t-\tau)f_R(\tau){\rm d}\tau.
\end{align}

Since $f_K\in C^{k}([0,t_f];\ker R(\mu)^k)$ the following function is well defined 
\begin{align}
\label{x_k_def}
x_2=-\sum_{i=0}^{k-1}R(\mu)^if^{(i)}_K\in C([0,t_f];\ker R(\mu)^k)
\end{align}
and fulfills
\begin{align*}
R(\mu)x_2=-\sum_{i=0}^{k-1}R(\mu)^{i+1}f^{(i)}_K=-\sum_{i=1}^{k-1}R(\mu)^if^{(i-1)}_K\in C^1([0,t_f];\ker R(\mu)^k).
\end{align*}
Therefore, $x_2$ is a classical solution of the right-hand side equation in \eqref{decoupled}. Summarizing, we have shown that $x=x_1+x_2$ is a classical solution of \eqref{psresDGL_allg} for the initial value
\[
x(0)=x_1(0)+x_2(0)=x_0-\sum_{i=0}^{k-1}R(\mu)^if^{(i)}_K(0).
\]

If $x$ is a mild solution, then the unique solvability of~\eqref{psresDGL_allg} is a consequence of the unique solvability of the homogeneous equation obtained in Proposition~\ref{prop:homo}. To construct a mild solution of~\eqref{psresDGL_allg}, we derive the mild solutions of~\eqref{decoupled}. A mild solution of~\eqref{transformed_f_R} is given for all $f_R\in L^1([0,t_f];\overline{\ran R(\mu)^k})$ and all $x_0\in \overline{\ran R(\mu)^k}$ by~\eqref{VDK}, see \cite[Thm.~10.1.8]{JacoZwar12}.
Further, $x_2$ as in~\eqref{x_k_def} is also a mild solution of $ \tfrac{\rm d}{{\rm d} t}R(\mu)x_2(t)=x_2(t)+f_K(t)$ and the initial values satisfy~\eqref{IVal}.
 \end{proof}

\begin{remark}\
\begin{itemize}
    \item[\rm (i)] In~\cite{ReisTisc05} inhomogeneous ADAEs in Hilbert spaces where solved based on the Laplace transform. Here it was assumed that $\Cpo\subseteq \rho(E,A)$ for some $\omega\in\R$ and that the tractability index is finite, see Section~\ref{sec:indexsol}. Here, instead of \eqref{inhom}, the ADAE $E\tfrac{\rm d}{{\rm d}t}x(t)=Ax(t)+f(t)$, $t\in[0,t_f]$, was considered. Hence for the existence of solutions, the stronger assumption $f\in W^{k,1}([0,t_f];X)$ is required.

    \item[\rm (ii)] The differentiability assumption on $f_K$ can be relaxed if $R(\mu)^{i}f_K=0$ for some $i<k-1$. Then it is sufficient to assume $f_K\in W^{i,2}([0,t_f],\ker R(\mu)^k)$ for the existence of  a classical solution and $f_K\in W^{i-1,2}([0,t_f],\ker R(\mu)^k)$ for the existence of a mild (weak) solution.
\item[\rm (iii)]  If the pseudo-resolvent $R$ associated with an ADAE fulfills ($\mathbf{D_1}$), then \eqref{f_dec} is trivially fulfilled. Hence Proposition~\ref{prop:inhom} guarantees the existence of mild solution if $f_K\in L^2([0,t_f],\ker R( \mu))$. It follows from Proposition~\ref{prop:bnddiss} that the assumption ($\mathbf{D_1}$) is e.g.\ fulfilled for standard boundary control systems that are rewritten as ADAEs. In this particular case, the conditions for the existence of a mild solution is slightly weaker then weak differentiability assumptions on the inputs which were used in~\cite{JacoZwar12}.
\end{itemize}
\end{remark}

If $X$ is a Hilbert space and $\mathbf{(D_k)}$ holds, then we can use the space decomposition $X=\Vc_k\oplus\Wc_k\oplus\ldots\Wc_1$ and the staircase form \eqref{staircase} applied to the pseudo-resolvent to solve the pseudo-resolvent equation \eqref{psresDGL_allg} and hence the inhomogeneous ADAE \eqref{inhom}. 

\begin{proposition}
Let $X$ be a Hilbert space and $R:\Omega\rightarrow L(X)$ be a pseudo-resolvent which fulfills $\mathbf{(D_k)}$ for some $k\geq 1$ and let the associated operator $A_R$ be the generator of a strongly continuous semigroup and $f=\hat f_{\mu}:[0,t_f]\rightarrow X$ the inhomogeneity in \eqref{psresDGL_allg}.  Assume that $f_i:=P_{\mathcal{W}_i}f\in C^{k+1-i}([0,t_f],X)$ holds for all $i=1,\ldots,k$ and $f_{k+1}:=P_{\mathcal{V}_k}f\in C([0,t_f],X)$. Then for each initial value $x_{k+1}^0\in\mathcal{V}_k$ there exists a unique classical solutions of \eqref{psresDGL_allg} with $x_{k+1}(0)=x_0$. 
\end{proposition}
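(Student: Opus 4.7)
The plan is to exploit the orthogonal Hilbert space decomposition $X=\Vc_k\oplus\Wc_k\oplus\ldots\oplus\Wc_1$ together with the staircase form \eqref{staircase} of $R(\mu)$ in order to reduce \eqref{psresDGL_allg} to a~block-triangular system that can be solved from the bottom upwards. Writing $w(t)=w_{k+1}(t)+\sum_{j=1}^{k}w_j(t)$ with $w_{k+1}(t)\in\Vc_k$, $w_j(t)\in\Wc_j$, and analogously $f=f_{k+1}+\sum_{j=1}^{k}f_j$, the vanishing last row of \eqref{staircase} forces the projection of \eqref{psresDGL_allg} onto $\Wc_1$ to reduce to the algebraic identity $0=w_1(t)+f_1(t)$, so that $w_1=-f_1\in C^k$. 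Inductively, the projection onto $\Wc_i$ for $i=2,\ldots,k$ reads
\[
\ddts\sum_{j=1}^{i-1}P_{\Wc_i}R(\mu)|_{\Wc_j}w_j(t)=w_i(t)+f_i(t),
\]
determining $w_i$ uniquely in terms of $f_i$ and the derivatives of $w_1,\ldots,w_{i-1}$. A routine induction on $i$ then yields $w_i\in C^{k+1-i}([0,t_f];\Wc_i)$: the hypothesis gives $f_i\in C^{k+1-i}$, and under the induction hypothesis $w_j\in C^{k+1-j}$ one has $w_j'\in C^{k-j}\subseteq C^{k+1-i}$ for every $j<i$. In particular each $w_j$ is of class $C^1$, which is exactly the regularity needed to carry the next round of differentiation through.

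Once $w_1,\ldots,w_k$ have been fixed, only the projection onto $\Vc_k$ remains. Using Theorem~\ref{thm:main}(c) to identify $R(\mu)|_{\Vc_k}=(A_R-\mu)^{-1}$ and introducing $u(t):=R(\mu)w_{k+1}(t)$, this projection turns into the inhomogeneous abstract Cauchy problem
\[
u'(t)=(A_R-\mu)u(t)+g(t),\qquad u(0)=R(\mu)x_{k+1}^0,
\]
with $g(t):=f_{k+1}(t)-\ddts\sum_{j=1}^{k}P_{\Vc_k}R(\mu)|_{\Wc_j}w_j(t)$. By the smoothness proven in the previous step, $g\in C([0,t_f];\Vc_k)$, and by construction $u(0)\in\ran R(\mu)|_{\Vc_k}=\dom A_R$. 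Since $A_R-\mu$ generates the strongly continuous semigroup $(e^{-\mu t}T(t))_{t\geq 0}$, I would invoke the same classical solvability result as used in the proof of Proposition~\ref{prop:inhom}~(a), applied on $\Vc_k$ with the restricted pseudo-resolvent (whose $\Wc$-component in \eqref{f_dec} is trivial), to produce a unique $u\in C^1([0,t_f];\Vc_k)$ with $u(t)\in\dom A_R$. Setting $w_{k+1}:=(A_R-\mu)u$ and reassembling $x:=w_{k+1}+\sum_{j=1}^k w_j$ yields the desired classical solution of \eqref{psresDGL_allg}, while uniqueness follows from the uniqueness of $u$ combined with the explicit prescription of each $w_j$ obtained above.

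The main obstacle I foresee is the relatively weak smoothness of the inhomogeneity $g$, which is only continuous. A direct appeal to Pazy-type classical solvability for $u'=(A_R-\mu)u+g$ normally requires $g\in C^1$ or $g\in W^{1,1}$, so some care is needed to obtain a genuinely classical (as opposed to merely mild) solution $u$. I expect this to be handled exactly as in Proposition~\ref{prop:inhom}~(a)---where the analogous gap in regularity is closed by the result cited there---since the equation for $u$ on $\Vc_k$ is of precisely the same type as the $f_R$-component equation arising in that proposition; the finer staircase decomposition used here only serves to distribute the smoothness budget across $f_1,\ldots,f_k$ in a sharper way than the single blanket condition $f_K\in C^k$ used in Proposition~\ref{prop:inhom}~(a).
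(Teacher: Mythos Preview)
Your approach is essentially identical to the paper's: decompose $X=\Vc_k\oplus\Wc_k\oplus\cdots\oplus\Wc_1$, use the staircase form \eqref{staircase} to solve the algebraic blocks $w_1,\ldots,w_k$ from the bottom up with the same regularity bookkeeping $w_i\in C^{k+1-i}$, and then reduce the $\Vc_k$-component to a pseudo-resolvent equation with continuous inhomogeneity that is dispatched by Proposition~\ref{prop:inhom}\,(a). The concern you flag about $g$ being merely continuous is exactly the same gap present in the paper's invocation of Proposition~\ref{prop:inhom}\,(a), and the paper resolves it in the same way you anticipate, by citing the result used there.
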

\begin{proof}
According to the space decomposition $X=\Vc_k\oplus\Wc_k\oplus\ldots\Wc_1$ we write $x=(x_{k+1},\ldots,x_{1})$, and abbreviate $R_{i+1,j}:=P_{\mathcal{W}_i}R(\mu)|_{\mathcal{W}_{j}}$,  $i,j=1,\ldots,k-1$ and $R_{k+1,j}:=P_{\mathcal{V}_k}R(\mu)|_{\mathcal{W}_{j}}$, $j=1,\ldots,k$.  Therefore,  \eqref{psresDGL_allg} can be rewritten using 
the staircase form \eqref{staircase} 
\begin{align}
\label{eq:staircase_proof}
\frac{d}{dt}\begin{smallbmatrix}
  R(\mu)|_{\mathcal{V}_k} & R_{k+1,k}& R_{k+1,k-1} &\ldots &R_{k+1,1} \\ 0&0& R_{k,k-1} &\ldots& R_{k,1}\\ &\ddots~~~~~&&\ddots&\\ & &~~~~~\ddots&& R_{2,1}\\& & & 0~~~~&0
\end{smallbmatrix}\begin{bmatrix}
    x_{k+1}\\ x_k\\ \vdots\\ x_{1}
\end{bmatrix}=\begin{bmatrix}
    x_{k+1}\\ x_k\\ \vdots\\ x_{1}
\end{bmatrix}+\begin{bmatrix}
    f_{k+1}\\ f_k\\ \vdots\\ f_{1}
\end{bmatrix}.
\end{align}
Step-by-step solution of \eqref{eq:staircase_proof} starting from the last row results in $x_{1}=-f_1$ and
\[
x_{i}=-f_i-\frac{\rm d}{{\rm d}t}\sum_{j=1}^{i-1} R_{i,j}x_j,\quad  i=2,\ldots,k,
\]
where the derivatives exist and are continuous since $f_i\in C^{k+1-i}([0,t_f],X)$. Furthermore, $\sum_{j=1}^k R_{k+1,j}x_j$ is continuously differentiable.

It remains to derive a classical solution $x_{k+1}$ for the first line in \eqref{eq:staircase_proof}. To this end we combine the fixed terms and rewrite the first line with continuous inhomogeneity $f_{k+1}+\tfrac{\rm d}{{\rm d} t}\sum_{j=1}^k R_{k+1,j}x_j$. Therefore we can apply Proposition \ref{prop:inhom}~(a) which yields that for all $x_{k+1}^0\in\mathcal{V}_k$ there exists a unique classical solution $x_{k+1}$ such that $x_{k+1}(0)=x_{k+1}^0$ holds. This finishes the proof.
\end{proof}

\section{Two applications}
\label{sec:appl}
\subsection{Heat-wave coupling}

The following system of PDEs from \cite{BattPauo16,SchwZwar14} describes a wave equation on the interval $(-1,0)$  coupled at $\xi=0$ with a heat equation on the interval $(0,1)$
\[
u_{tt}(t,\xi)=u_{\xi\xi}(t,\xi),~ \xi\in(-1,0),\quad
        w_t(t,\xi)=w_{\xi\xi}(t,\xi),~ \xi\in(0,1).
\]
Furthermore, we impose Dirichlet boundary conditions at both endpoints, interface conditions at $\xi=0$ and initial values
\begin{align*}
        w(t,1)&=0, & u_\xi(-1,t)&=0, &&\\ u_\xi(t,0)&=w_\xi(t,0), & u_t(t,0)&=w(t,0), &&\\
        w(0,\xi)&=w_0(\xi),& u_t(0,\xi)&=v_0(\xi), &u(0,\xi)&=u_0(\xi).
    \end{align*}
    We follow~\cite{SchwZwar14} and write this altogether as an ADAE, using the characteristic function $\chi_I$ on an interval $I$, in the following way on $X=Z=L^2([-1,1])\times L^2([-1,1])$
    \begin{align*}
    \frac{d}{dt}\underbrace{\begin{pmatrix}I&0\\0&\chi_{(-1,0)}I\end{pmatrix}}_{=:E}\begin{pmatrix}x_1(t,\xi)\\x_2(t,\xi)\end{pmatrix}=\underbrace{\begin{pmatrix}0&\partial_\xi\\\partial_\xi&-\chi_{(0,1)}\end{pmatrix}}_{=:A}\begin{pmatrix}x_1(t,\xi)\\x_2(t,\xi)\end{pmatrix},\quad (Ex)(0)=Ex_0 \\ x_1(t,\xi)=\begin{cases}u_t(\xi,t), & \text{if $\xi<0$,}\\ w(\xi,t), & \text{if $\xi>0$,}\end{cases},\quad x_2(\xi,t)=\begin{cases}u_\xi(\xi,t), & \text{if $\xi<0$,}\\ w_{\xi}(\xi,t), & \text{if $\xi>0$,}\end{cases}
    \end{align*}
    where $E$ is clearly bounded and $A$ is densely defined on $X$ with domain
    \begin{align*}
    \dom A&:=\left\{(x_1,x_2)\in X : x_1,x_2\in W^{1,2}([-1,1]), x_2(-1)=x_1(1)=0\right\},\\ x_0(\xi)&:=\begin{pmatrix}u_0(\xi)\\v_0(\xi)\end{pmatrix},\, \text{if $\xi<0$ and} \quad  x_0(\xi):=\begin{pmatrix}w_0(\xi)\\0\end{pmatrix},\, \text{if $\xi>0$.}
    \end{align*}
The invertibility of the off-diagonal entries $\partial_{\xi}$ on their respective domains implies that $A$ is also invertible and therefore  $0\in\rho(E,A)\neq\emptyset$. Furthermore, we have from integration by parts that for all $x=(x_1,x_2)\in\dom A$
\begin{align*}
 (Ax,x)_{L^2([-1,1])^2}&= \left(\begin{pmatrix}0&\partial_\xi\\\partial_\xi&-\chi_{(0,1)}\end{pmatrix}\begin{pmatrix}x_1\\x_2\end{pmatrix},\begin{pmatrix}x_1\\x_2\end{pmatrix}\right)_{L^2(-1,1)^2}\\&=\int_{-1}^1\partial_{\xi}x_2(\xi)\overline{x_1}(\xi)d\xi+\int_{-1}^1(\partial_{\xi}x_1(\xi)-\chi_{(0,1)}(\xi)x_2(\xi))\overline{x_2(\xi)}d\xi\\
&=\underbrace{[x_1(\xi)x_2(\xi)]_{\xi=-1}^1}_{=0}-\int_{-1}^1x_2(\xi)\overline{\partial_{\xi} x_1}(\xi)d\xi\\&~~~~+\int_{-1}^1(\partial_{\xi}x_1(\xi)-\chi_{(0,1)}(\xi)x_2(\xi))\overline{x_2(\xi)}d\xi.
\end{align*}
Therefore, we have for all $x=\begin{smallpmatrix}x_1\\x_2\end{smallpmatrix}\in\dom A$ that
\[
\re(Ax,x)_{L^2(-1,1)^2}=-\int_{-1}^1\chi_{(0,1)}(\xi)|x_2(\xi)|^2d\xi\leq 0
\]
which implies with Proposition~\ref{prop:suff_for_D2} that $\mathbf{(D_2)}$ holds for $R_{l}$. Furthermore, the condition~\eqref{y_impli} is fulfilled, which implies that $\begin{smallbmatrix}
EA^{-1}\\ I_n
\end{smallbmatrix}(\ran E)$ is the graph of a maximal dissipative operator which can be used to obtain the solutions of that ADAE. Indeed, to verify~\eqref{y_impli}, let $(x_1,x_2)\in\ker E$ then $x_1=0$ and $x_2\equiv 0$ on $(-1,0)$. If $A(0,x_2)\in\ran E$ then looking at the second component yields $x_2=0$ on $(0,1)$ and therefore $x=0$. Note that this condition can be used to obtain the solution of the heat-wave system using the staircase form \eqref{y_impli}.

\subsection{Distributed element circuits}
An electrical circuit with distributed resistance $R$, capacitance $C$ and inductivity $L$ on the spatial domain $[0,1]\subseteq\R$ can be described by the system of partial differential equations
\begin{align*}
L(x)\dot I(t,x)&=-R(x)I(t,x)-\frac{\partial V(t,x)}{\partial x},\nonumber \\
C(x)\dot V(t,x)&=-\frac{\partial I(t,x)}{\partial x}-G(x)V(t,x),\\ V(t,0)&=u_0(t),\, -I(1,t)=i_1(t)\nonumber
\end{align*}
where we assume that $L,C,R,G\in L^{\infty}([0,1])$ are nonnegative and that $L$ and $C$ are bounded away from zero on their support. This can be rewritten as an ADAE with $X=L^2([0,1])\times L^2([0,1])$, $Z=L^2([0,1])\times L^2([0,1])\times \R^2$ and
\[
E=\begin{pmatrix} L(\cdot)&0\\0&C(\cdot)\\0&0\\0&0\end{pmatrix},\quad A=\begin{pmatrix} -R(\cdot)&-\partial_{\xi}\\-\partial_{\xi} &-G(\cdot)\\\delta_1&0\\0&\delta_0\end{pmatrix}
\]
with $\dom A:=W^{1,2}([0,1])\times W^{1,2}([0,1])$ and $\delta_{x}$ is the point evaluation at $x\in[0,1]$. We show in the following that the resolvent $R_{l}$ fulfills $\mathbf{(D_2)}$ by applying Proposition~\ref{prop:suff_for_D2}.

First, note that \cite[Thm.~2.1]{SingKuma77} implies that $E$ has closed range. The dissipativity of $A$ can be shown analogously as for the heat-wave coupling.

Next, we show that $0\in\rho(A)$. To prove invertibility of $A$, we consider
 \[
 W^{1,2}_{x,0}([0,1]):=\{g\in W^{1,2}([0,1]) ~|~ g(x)=0\},~ x=0,1,\quad  \partial_{\xi,x}:=\partial_{\xi}\vert_{W^{1,2}_{x,0}([0,1])}.
 \]
 and first show the invertibility of
\begin{align*}
\hat A&:=\begin{bmatrix}-R(\cdot)\vert_{W^{1,2}_{1,0}([0,1])}&-\partial_{\xi,0}\\-\partial_{\xi,1}&-G(\cdot)\vert_{W^{1,2}_{1,0}([0,1])}\end{bmatrix}\\
\dom \hat A&:=W^{1,2}_{1,0}([0,1])\times W^{1,2}_{0,0}([0,1]).
\end{align*}
Clearly, interchanging the block columns of $\hat A$ preserves the invertibility and hence the invertibility of  $\hat A$ would follow from the invertibility of $\partial_{\xi,0}$ together with the invertibility of the Schur complement $\partial_{\xi,1}-G\partial_{\xi,0}^{-1}R$. This can easily be concluded from Fredholm theory, see \cite[p.~358]{GohbGold03}.
Indeed, $\partial_{\xi,1}-G\partial_{\xi,0}^{-1}R$ is a Fredholm operator with index zero, since $\partial_{\xi,1}$ is invertible and
$\partial_{\xi,0}^{-1}$ as well as $G\partial_{\xi,0}^{-1}R$ are  compact. To prove the invertibility of the Schur complement it is therefore enough to prove that it is injective. Assume conversely that there is an $g\in W^{1,2}_{1,0}([0,1])$ satisfying
 \[
 \partial_{\xi,1}g(x)-R\partial_{\xi,0}^{-1}Gg(x)=g'(x)-R(x)\int_0^xG(\tau)g(\tau)d\tau=0,\quad x\in[0,1].
 \]
 If we assume that $g(0)>0$ then $R,G\geq 0$ implies that the derivative is nonnegative for all $x\in[0,1]$. Invoking $g\in W^{1,2}_{1,0}([0,1])$, leads to $0=g(1)\geq g(0)=1$, the desired contradiction. The same conclusion holds for $g(0)<0$.
 Thus $g=0$, and hence $\partial_{\xi,1}-G\partial_{\xi,0}^{-1}R$ is invertible, meaning that $\hat A$ is also invertible.

 This together with the surjectivity of the point evaluations $\delta_0,\delta_1$, implies that $A$ is  surjective. Moreover, $A$ is injective, since $A(g_1,g_2)=0$ with $g_i\in W^{1,2}([0,1])$, $i=1,2$ implies $\delta_1 g_1=0$ and $\delta_0 g_2=0$. Hence $g=(g_1,g_2)\in W^{1,2}_{0,0}([0,1])\times W^{1,2}_{1,0}([0,1])$. The invertibility of $\hat A$ implies $g=0$.

\section*{Acknowledgments}
This work was supported by the grants RE 2917/4-1 and WO 2056/1-1 ``Systems theory of partial differential-algebraic equations'' by the Deutsche Forschungsgemeinschaft (DFG). The authors would like to thank Jochen Gl\"{u}ck and Felix L.\ Schwenninger for valuable discussions and suggestions.

\bibliographystyle{plain}
\bibliography{references}

\end{document}